\documentclass[mnsc,nonblindrev]{informs5}

\usepackage{amssymb,bm}
\usepackage{endnotes}

\usepackage{subcaption}

\usepackage[normalem]{ulem}

\usepackage{graphicx}
\usepackage{dsfont}
\usepackage{verbatim}
\usepackage{mathrsfs}
\usepackage{color}
\usepackage{epsfig}

\usepackage{url}
\usepackage{natbib}
\bibpunct[, ]{(}{)}{,}{a}{}{,}%
\def\bibfont{\small}%
\def\bibsep{\smallskipamount}%
\def\bibhang{24pt}%
\def\bibfont{\small}%
\def\bibsep{\smallskipamount}%
\def\bibhang{24pt}%

\usepackage[colorlinks,linkcolor=blue,citecolor=blue,anchorcolor=blue]{hyperref}

\newcommand{\Px}{ \mathbb{P} }

\newcommand{\Ex}{ \mathbb{E} }

\def\esssup_#1{\underset{#1}{\mathrm{ess\,sup\, }}}
\def\essinf_#1{\underset{#1}{\mathrm{ess\,inf\, }}}
\def\argmax_#1{\underset{#1}{\mathrm{arg\,max\, }}}
\def\argmin_#1{\underset{#1}{\mathrm{arg\,min\, }}}

\newcommand{\Fx}{\mathbb{F} }

\newcommand{\R}{\mathds{R}}

\newtheorem{theorem}{Theorem}[section]

\numberwithin{equation}{section}

\newtheorem{remark}[theorem]{Remark}
\newtheorem{lemma}[theorem]{Lemma}
\newtheorem{corollary}[theorem]{Corollary}

\allowdisplaybreaks[4]

\definecolor{Red}{rgb}{1.00, 0.00, 0.00}
\newcommand{\Red}{\color{Red}}
\definecolor{DRed}{rgb}{0.5, 0.00, 0.00}

\definecolor{Blue}{rgb}{0.00, 0.00, 1.00}

\definecolor{Green}{rgb}{0.0, 0.4, 0.0}

\newcolumntype{I}{!{\vrule width 1.2pt}}
\newlength\savedwidth

\newlength\savewidth

\usepackage{amssymb}

\begin{document}
\RUNAUTHOR{Bo, Huang and Zhang}
	\RUNTITLE{Optimal Consumption and Retirement Time under Shortfall Risk Measure}
	
	\TITLE{Optimal Consumption and Retirement Time under Shortfall Risk Measure}
	\ARTICLEAUTHORS{%
 \AUTHOR{Lijun Bo}
	\AFF{School of Mathematical Sciences,  University of Science and Technology of China, Hefei,  China\\ \EMAIL{lijunbo@ustc.edu.cn}}
\AUTHOR{Yijie Huang}
	\AFF{Department of Applied Mathematics, The Hong Kong Polytechnic University, Kowloon, Hong Kong, China\\ \EMAIL{yijie.huang@polyu.edu.hk}}
	\AUTHOR{Tingting Zhang}
	\AFF{Center for Financial Engineering,  Soochow University,  Suzhou,  China\\ \EMAIL{ttzhang1118@suda.edu.cn}  
}
	} 

\ABSTRACT{
This paper studies the optimal portfolio, consumption, and endogenous early retirement problem within a benchmark tracking framework by incorporating a new relative performance evaluation.  
In this framework, the investor maximizes expected lifetime consumption utility while managing the maximum wealth shortfall relative to a benchmark, with shortfall-management costs that may differ before and after retirement. Mathematically, the problem is a hybrid stochastic control problem involving both regular controls and an optimal stopping time, in which the running maximum process records the investor's largest benchmark shortfall. We introduce an auxiliary reflected state process and establish an equivalent hybrid stochastic control problem. By proving the convex duality theorem, we technically transform the original problem into a two-dimensional pure optimal stopping problem with state reflection. This enables us to characterize the geometric structure of the stopping set and derive the feedback-form optimal retirement boundary, as well as optimal portfolio and consumption policies. Analytical examples and numerical simulations reveal a two-stage structure with more conservative investment and more aggressive consumption after retirement. Driven by the retirement option, the expected largest shortfall risk follows a pronounced U-shaped pattern with respect to wealth. Shortfall management costs, labor income, and leisure preference significantly influence retirement timing, investment, and consumption. 
}
\KEYWORDS{ Optimal stopping, relaxed benchmark tracking, shortfall risk measure, early retirement option, duality approach}
\maketitle

\section{Introduction}
Portfolio optimization lies at the heart of investment management, wherein investors dynamically adjust asset allocations, consumption plans, and long-term financial strategies to maximize welfare while monitoring performance relative to pre-specified benchmark indices. Since the seminal contributions of \cite{merton1969lifetime,merton1971optimum} establishing the continuous-time framework for lifetime portfolio and consumption decisions, the literature has extensively extended the baseline model to incorporate various real-world economic frictions, including stochastic labor income, transaction costs, and portfolio constraints.  A prominent strand of this research focuses on benchmark tracking and performance constraints, which require the portfolio wealth to outperform or remain above a stochastic reference process, a critical objective for individual investors who seek to control the risk of underperformance relative to their financial goals.  Unlike institutional investors who may access external financing, individual investors interpret benchmark tracking as a risk management objective. They aim to penalize the shortfall risk when wealth falls below the benchmark, representing their aversion to underperformance rather than actual capital infusion. Within the stochastic control framework, this shortfall risk is mathematically captured by a maximum process, which records the cumulative largest shortfall when wealth falls below the benchmark, and the associated cost parameter quantifies the investor's disutility from benchmark underperformance. Recent studies (\citealt{bo2025stochastic,bo2026optimal,bo2026extended}) have proposed relaxed tracking formulations that balance consumption utility maximization with shortfall risk minimization. However, existing literature has largely overlooked the systematic investigation of life-cycle decisions, particularly endogenous retirement timing under benchmark tracking constraints.

With the increasing complexity of financial markets and the accelerating global demographic aging, life-cycle financial planning has attracted substantial scholarly attention (\citealt{jin2006disutility,choi2008optimal,dybvig2010lifetime,farhi2007saving,yang2018optimal}). A key element therein is the early retirement option, an endogenous optimal stopping decision that reflects the investor's intricate trade-off between the utility increment from labor supply and the leisure value from financial freedom. Crucially, the marginal cost of shortfall exhibits significant heterogeneity across different stages of the investor's life-cycle. After retirement, the reduction in regular income and increased financial vulnerability amplify the adverse impact of benchmark underperformance, rendering the investor more sensitive to shortfall risk.
This life-cycle heterogeneity in shortfall aversion results in investors facing higher effective shortfall costs in the post-retirement phase than during their working years, establishing a close intrinsic connection between the optimal timing of retirement and minimizing extreme shortfall exposure. This motivates our analysis of the integrated decision problem encompassing optimal portfolio allocation, consumption choice, and retirement timing under benchmark-tracking-based shortfall risk measurement.

In this paper, we study a class of hybrid optimal stopping control problems within a benchmark tracking framework by incorporating a new relative performance evaluation. A key challenge in solving such hybrid control problems lies in the presence of a maximum process that captures the investor’s largest wealth shortfall relative to the benchmark, including non-linearity and dimensionality issues that make analytical solutions difficult to obtain.
To solve this problem, we introduce an auxiliary state process and establish an equivalent hybrid optimal stopping and control problem under state reflection. Furthermore, by proving the convex duality theorem for the auxiliary stochastic control problem, we technically transform the original hybrid control problem into a tractable two-dimensional pure optimal stopping problem with state reflection. 
This transformation overcomes the non-linearity challenge and allows us to characterize the geometric structure of the two-dimensional stopping set, which determines the optimal retirement timing. We study the corresponding variational inequality with Neumann boundary conditions for the dual problem, which is easier to handle compared to the nonlinear parabolic differential operator arising in the original problem. Based on the convex duality theorem, we obtain the feedback-form optimal retirement boundary for the original problem, as well as the optimal portfolio and consumption policies. We provide two analytical examples to analyze the properties of retirement decisions, optimal strategies, and relative wealth shortfall risk, and we implement numerical simulations to analyze and verify our theoretical findings, providing practical insights for investors and policymakers. This, in turn, underpins five key results. 

First, the investor’s optimal retirement timing is characterized by a wealth-dependent retirement threshold, which also varies with the benchmark state. Once wealth reaches this threshold, retirement becomes optimal, yielding higher expected utility.
Second, optimal portfolio and consumption policies exhibit a distinct two-stage structure around the retirement threshold. After retirement, the optimal portfolio becomes markedly more conservative than in the working stage. While employed, the investor benefits from steady labor income and faces lower shortfall costs, supporting higher consumption than in the immediate-retirement case.
Third, higher pre-retirement shortfall management costs reduce both risky investment and consumption, and bring forward the optimal retirement date. By contrast, higher post-retirement shortfall costs encourage greater pre-retirement consumption and delay retirement. A higher labor income increases pre-retirement risk tolerance but depresses consumption and raises the required retirement wealth threshold. A stronger preference for leisure lowers the retirement threshold and reduces pre-retirement consumption to accelerate wealth accumulation.
Fourth, the expected largest shortfall risk is U-shaped in wealth: it decreases before retirement and rises thereafter. Greater shortfall management costs increase the minimum shortfall risk and shift the U-shaped curve rightward, while higher labor income and stronger leisure preference reduce the minimum shortfall risk.
Fifth, a higher benchmark state leads to more conservative portfolios, lower consumption, and delayed retirement. It also raises and reshapes the shortfall risk profile, reflecting stronger precautionary behavior and a higher wealth requirement to meet stricter tracking targets.

The benchmark tracking problem has been extensively studied in the quantitative finance literature, with diverse formulations arising from different performance measures and constraint specifications. \cite{browne1999beating,browne1999reaching,browne2000risk} pioneers the study of active portfolio management with several objective functions, including maximizing the probability of beating the benchmark,  maximizing the probability of reaching a preset financial goal within a given time horizon, and their mixed formulations. An alternative approach, widely adopted in portfolio management practice, minimizes the tracking error variance or downside variance relative to the index value or return, leading to linear-quadratic stochastic control problems \citep{gaivoronski2005optimal,yao2006tracking,ni2022optimal}. 
\cite{strub2018optimal} introduce a tracking formulation that measures the similarity between normalized historical trajectories of the tracking portfolio and the index, incorporating rebalancing transaction costs.  \cite{bo2021optimal,bo2025stochastic} develop a relaxed tracking framework for monotone benchmark processes via fictitious capital injection, which imposes a dynamic floor constraint on wealth relative to the benchmark. \cite{bo2026extended} further extend this model to general It\^o's benchmarks and obtained semi-analytical optimal controls under CRRA utility, showing that capital injection expands the control set and leads to more aggressive strategies than the standard Merton solution. Additionally,  existing literature also investigates the related shortfall risk measures, but they differ essentially from the expected largest shortfall considered in this paper. For example,  \cite{pham2002minimizing} introduce the traditional expected shortfall for terminal random variables, which focuses on tail losses at a specific terminal time rather than the cumulative maximum shortfall over the entire time horizon. More recently, \cite{farkas2021intra} propose the intra-horizon expected shortfall, which captures losses occurring at any point during the trading period but does not integrate relative performance against a benchmark tied to retirement options. Notably, these studies are exclusively focused on infinite-horizon problems and do not incorporate life-cycle decisions like retirement timing, nor do they explore the differential shortfall risk management costs across various life-cycle stages.

The optimal retirement timing has emerged as an important research topic at the intersection of labor economics and financial mathematics, particularly against the backdrop of population aging and the increasing importance of individual retirement planning. The early contributions by \cite{jin2006disutility}, \cite{choi2008optimal}, and \cite{dybvig2010lifetime,dybvig2011verification} study voluntary retirement problems in infinite-horizon settings, characterizing the optimal retirement threshold as a critical wealth level above which the agent chooses to retire immediately. \cite{farhi2007saving} examine the interaction between retirement timing and savings decisions, providing empirical evidence that stock market performance significantly affects retirement patterns. For instance, the stock market boom between 1995 and 2000 precipitated a dramatic increase in voluntary early retirement. Notable advancements include the introduction of features like mandatory retirement dates and early retirement options \citep{yang2018optimal}, the consideration of consumption ratcheting \citep{jeon2020optimal}, the exploration of partial information \citep{chen2022optimal}, the examination of return ambiguity in risky asset prices \citep{park2023robust}, the analysis of non-Markovian environments \citep{yang2021optimal}, and the study of heterogeneous consumption patterns involving basic and luxury goods \citep{jang2024optimal}.
However, the above retirement literature has not considered the practical requirement of tracking stochastic benchmarks, nor has it incorporated measuring the largest shortfall risk. Distinct from those studies, our paper investigates the optimal retirement timing, investment, and consumption decisions under the shortfall risk measure, which mathematically involves a complex mixed optimal stopping and control problem with state reflection.

 In the literature, three common methods are used to analyze the properties of the value function and optimal strategy for optimal control problems: the martingale method with dual transformation (see, e.g., \citealt{cox1989optimal,karatzas1987optimal,karatzas1998methods}), the transformation of the associated Hamilton-Jacobi-Bellman (HJB) equation into a linear partial differential equation (PDE) via Legendre transformation (see, e.g., \citealt{karatzas1986explicit,fleming2006controlled}), and the stochastic maximum principle (see, e.g., \citealt{yong1999stochastic}). However, none of these methods can be directly applied to our problem. This is because the state equations differ substantially before and after retirement, and the state reflection measured by the wealth shortfall is also distinct, leading to different dual transformations and Legendre transformations in the two phases. Furthermore, the stopping time (i.e., retirement timing) is unknown and dynamically interacts with the optimal portfolio and consumption controls, as well as the minimization of the expected largest wealth shortfall risk. For optimal stopping problems alone, three commonly used methods exist: the martingale method (see, e.g., \citealt{karatzas1998methods,karatzas2000utility}), the probabilistic method (see, e.g., \citealt{peskir2006optimal,de2015nonconvex}), and the PDE method via its associated variational inequality (see, e.g., \citealt{bensoussan1974nonlinear,friedman1973stochastic}). However, for the mixed control problem investigated in our paper, it is difficult to characterize the reflected state process and the dual form of the objective function by directly adopting the martingale approach or probabilistic method. The corresponding HJB equation is a variational inequality governed by a fully nonlinear parabolic differential operator and subject to Neumann boundary conditions, which poses substantial challenges for analyzing the properties of the solution. This further highlights the necessity of our innovative methodology. Instead of directly tackling the original problem, this paper first introduces a heuristic process with state reflection and the corresponding value function. After systematically analyzing the structural properties and solvability of this heuristic process and its value function, we verify the dual relationship between them and the original problem. Subsequently, the optimal strategy and the solution structure of the original problem are derived from the dual value function. 

The rest of the paper is organized as follows. Section~\ref{sec:model} presents the problem formulation. Section~\ref{sec:duality} introduces the dual problem and associated structure properties. 
Section~\ref{sec:dualityTH} develops the convex duality theory for the original mixed optimal stopping-control problem and characterizes the corresponding optimal control strategy. Section~\ref{sec:exa}  provides two explicit examples and characterizes an investor's benefit and the expected largest shortfall risk from the retirement option. Section~\ref{sec:conc}  summarizes the main results and discusses future research
directions. The proofs of all results in the previous sections are collected in the E-Companion.

\section{Problem Formulation}\label{sec:model}
We consider a portfolio-consumption problem in which an investor tracks a stochastic benchmark while managing consumption and retirement decisions under downside shortfall risk. The problem leads to a hybrid framework of stochastic control and optimal stopping. More precisely, the market considered in the paper is established on a filtered probability space $(\Omega, \mathcal{F}, \Fx,\mathbb{P})$ with the filtration $\mathbb{F}=(\mathcal{F}_t)_{t\geq 0}$ satisfying the usual conditions. The market consists of $d$ risky assets and the price dynamics of asset $i$ is described as the Black-Scholes model, for $t\geq0$,
\begin{align}\label{stockSDE}
\frac{dS_t^i}{S_t^i}= \mu_idt+\sum_{j=1}^d\sigma_{ij}dW_t^{j},\quad i=1,\ldots,d,
\end{align}
where $\mu=(\mu_1,\ldots,\mu_d)^{\top}\in\R^d$ is the vector of return rates, $\sigma=(\sigma_{ij})_{i,j=1}^d$ denotes the volatility matrix which is invertible, and $W=(W^1,\ldots,W^d)^{\top}=(W_t^1,\ldots,W_t^d)_{t\geq 0}^{\top}$ is a $d$-dimensional $\mathbb{F}$-adapted Brownian motion (BM). Here, we assume that the riskless interest rate is zero, which amounts to the change of num\'{e}raire.
From this point onwards, all processes, including the wealth process and the benchmark process, are defined after the change of num\'{e}raire. 

Consider the performance of the investor's portfolio relative to an external benchmark process $Z = (Z_t)_{t\geq0}$ which is given by the following Black-Scholes model:
\begin{align}\label{eq:Zt}
d Z_t=\mu_Z Z_t d t+\sigma_Z Z_t  d W_t^\gamma,\quad Z_0=z\in\R_+:=[0,\infty),
\end{align}
where the BM $W^{\gamma}:=\gamma^{\top}W$ is a linear combination of the $d$-dimensional BM $W$ with weights $\gamma=(\gamma_1,\ldots,\gamma_d)^{\top}\in[-1,1]^d$ satisfying $|\gamma|=1$. We also assume that the weight $\gamma$ has no linear correlation with the shape ratio $\mu \sigma^{-1}$, which can guarantee the ellipticity of the operator of the variational inequality in Section~\ref{sec:duality}. Unless specified otherwise, $|\cdot|$ refers to the Euclidean norm of vectors. The benchmark process $Z=(Z_t)_{t\geq0}$ may represent a market index, a pension target, or a professionally managed reference portfolio used to evaluate the investor’s relative performance.

The investor dynamically chooses portfolio allocation, consumption, and retirement timing in order to balance utility from consumption against persistent underperformance relative to a benchmark. For $t\geq 0$, 
let $\theta_t^i$ denote the amount invested in risky asset $S^i$, and let $c_t$ denote the consumption rate.  
The investor receives stochastic labor income before retirement and has no income after retirement, where the labor income at time $t$ is a linear function of the index value he tracks at time $t$, i.e., the labor income at time $t$ is given by $r_t:=r Z_t+r_c$ with $r ,r_c> 0$. Then, the resulting self-financing wealth process of the investor under the portfolio vector $\theta=(\theta_t^1,\ldots,\theta_t^d)_{t\geq 0}^{\top}$ and the consumption strategy $c=(c_t)_{t\geq 0}$ is given by
\begin{align}\label{eq:wealth2}
d V^{\theta,c,\tau}_t =\theta_t^{\top}\mu dt- c_tdt+ r_t\mathds {1}_{\{\tau>t\}}dt+\theta_t^{\top}\sigma dW_t,\quad V^{\theta,c,\tau}_0={\rm v}\geq0,
\end{align}
where $\tau\in[0,+\infty]$ is the retirement time which is an $\Fx$-stopping time. We assume that
\begin{itemize}
\item[$\left(\boldsymbol{A}_{\boldsymbol{Z}}\right)$]
$0\leq r\leq \mu_X:=\mu_Z -\sigma_Z  \gamma^{\top} \sigma^{-1} \mu$.
\end{itemize}
Assumption $\left(\boldsymbol{A}_{\boldsymbol{Z}}\right)$ ensures that the market index’s Sharpe ratio is sufficiently high. Economically, this means that the investor in our model restricts attention to benchmark processes that exhibit strong market performance. In practice, investors naturally prefer benchmarks with high Sharpe ratios, making this assumption economically plausible. 

Given the benchmark process $Z=(Z_t)_{t\geq 0}$, denote by $A=(A_t)_{t\geq0}$ the largest shortfall when the wealth falls below the benchmark, that is
 \begin{align}\label{A-sing}
A^{(\theta,c,\tau)}_t =0\vee \sup_{s\leq t}\left(Z_{s}-V_{s}^{\theta,c,\tau}\right), \quad\forall t\geq0.
\end{align}
The process $A$, which is non-decreasing and non-negative, records the running maximal benchmark deviation of the investor’s portfolio, with higher values signaling greater exposure to persistent underperformance and associated shortfall risk under the given investment, consumption, and retirement strategy.
The associated shortfall risk is penalized through $\Ex[A_0+\int_0^{\cdot} e^{-\hat{\rho} t}dA_t]$, 
where $\hat{\rho}>0$ is the discount factor. 
By tracking the relative performance against the benchmark, the investor can assess the effectiveness of their investment strategies, strategically adjust asset allocations, consumption rates, and retirement time, thus optimizing the tradeoff between the expected utility of consumption and the expected largest shortfall. 
The investor considers maximizing his net utility of consumption, for any $(\mathrm{v},z)\in\R_+^2$,
\begin{align}\label{eq_prob_IBP0}
&{\rm w}(\mathrm{v},z):=\sup_{(\tau,\theta,c) \in \mathbb{T}\times\mathbb{U}^{\rm r}} \mathbb{E}\Bigg[\int_0^{\tau_d} e^{-\hat \rho s} (U(c_s)-\ell_s \mathds 1_{\{s\leq \tau\}} )ds-\alpha A_0\notag\\
&\qquad\qquad\qquad\qquad\qquad\quad-\left(\int_0^{\tau_d} e^{-\hat \rho s}(\alpha \mathds 1_{\{s\leq\tau\}}+\beta \mathds 1_{\{s>\tau\}})dA_s\right)\Bigg],
\end{align}
where we recall that $\hat{\rho}> 0$ is the discount rate, and  $\ell_t:=\ell Z_t+\ell_c>0$ with $\ell,\ell_c> 0$ representing the utility of leisure; while $\tau_d>0$ denotes the stochastic time of death and follows an exponential distribution with mortality rate $\delta>0$ which is independent of BM $W$. Here, $\mathbb{U}^{\rm r}$ is the admissible set of regular controls, which consists of all $\mathbb{F}$-adapted processes $(\theta,c)=(\theta_t, c_t)_{t \geq 0}$, taking values in $\R^d \times \R_+$, that satisfy the integrability condition $\Ex[\int_0^t (c_s + |\theta_s|^2) ds] < \infty$ for all $t \geq 0$. A retirement time $\tau$ is said to be admissible, i.e, $\tau\in\mathbb{T}$, if it is an $\mathbb{F}$-stopping time. Then, the value function \eqref{eq_prob_IBP0} can be rewritten as follows: 
\begin{align}\label{eq_prob_IBP}
{\rm w}(\mathrm{v},z)=\sup_{(\tau,\theta,c) \in \mathbb{T}\times\mathbb{U}^{\rm r}} \mathbb{E}\Bigg[&\int_0^\tau e^{- \rho s} (U(c_s)-\ell_s )ds-\alpha\left(A_0+\int_0^\tau e^{-  \rho s} dA_s\right)\nonumber\\
&+\int_\tau^{\infty} e^{-  \rho s} U(c_s)ds-\beta\int_\tau^{\infty} e^{- \rho s} dA_s\Bigg],
\end{align}
where $\rho:=\hat \rho+\delta$ is the updated discount factor. When the agent is engaged in work activities, a sacrifice of this leisure utility inevitably occurs. Regarding the financing costs, $\alpha>0$ signifies the utility per irreversible investment before retirement, while $\beta>0$ denotes the cost per unit of shortfall risk for the post-retirement. Note that $\alpha$ is typically not larger than $\beta$, i.e., $\alpha\leq \beta$. This is because, before retirement, agents often have stable income streams from employment, which provide lenders with a certain level of security and thus result in relatively lower borrowing costs. In contrast, after retirement, the absence of regular employment-based income increases the perceived risk for lenders, leading to a higher $\beta$. The utility function $U(\cdot):\R_+\to\R_+$ is the CRRA utility function given by
\begin{align}\label{eq:utility}
U(x) = 
\dfrac{1}{p}x^p, \quad -\infty< p < 1 \text{ and } p \neq 0, 
\end{align}
where the risk-aversion level of the investor is given by $1 - p \in (0,1)\cup (1, +\infty)$.  We have the following assumption on the discount factor $\rho$.
\begin{itemize}
\item
[$\left(\boldsymbol{A}_{\boldsymbol{\rho}}\right)$] The discount factor $\rho>0$ satisfies
$\rho >\max \left\{2\mu_Z+\sigma_Z^2, \xi p /(1-p)\right\}$ with $\xi:=\mu^{\top}(\sigma \sigma^{\top})^{-1} \mu / 2$.
\end{itemize}

To solve the hybrid optimal stopping problem \eqref{eq_prob_IBP} involving a running maximum process, we now introduce an auxiliary stochastic control problem which is equivalent to the original one. To do it, we replace the original state process $V^{\theta,c,\tau}=(V_t^{\theta,c,\tau})_{t\geq 0}$  given in \eqref{eq:wealth2} by a reflected controlled state process $X=(X_t)_{t\geq0}$. We then define the  difference process by 
\begin{align*}
D_t:=Z_t-V_t^{\theta,c,\tau}+\mathrm{v}-z,\quad \forall t\geq0.    
\end{align*}
Moreover, for any $x\geq0$, we introduce the running maximum process associated with $D=(D_t)_{t\geq 0}$ given by
\begin{align*}
L_t^X :=x\vee \sup_{s\leq t}D_s,\quad \forall t\geq0. \end{align*}
Thus, the new state process $X=(X_t)_{t\geq 0}$ is defined as the reflected process $X_t:=L_t^X-D_t$ for $t\geq0$, which has the following dynamics by using  \eqref{eq:Zt} and \eqref{eq:wealth2},
\begin{align}\label{state-X}
X_t &=\int_0^t\theta_s^{\top}\mu ds+\int_0^t (r Z_s+r_c) \mathds {1}_{\{s<\tau\}} ds+\int_0^t\theta_s^{\top}\sigma dW_s  -\int_0^t c_s ds-\int_0^t \mu_Z Z_sds\nonumber\\
&\quad-\int_0^t \sigma_Z Z_s{ \gamma^{\top}dW_s}+ L_t^X
\end{align}
with the initial value $X_0=L_0^X=x$. In particular, the running maximum process $L^X=(L_t^X)_{t\geq0}$, referred to as the local time of the state process $X=(X_t)_{t\geq0}$ at boundary zero, increases strictly iff $X_t=0$ (i.e., $L_t^X=D_t$). We obtain from \eqref{A-sing} that the hybrid stochastic control problem can be transformed into the following regular stochastic control problem, for $(x,z)\in\R_+^2$,
\begin{align}\label{eq:u}
&v(x,z):=\sup_{(\tau,\theta,c) \in \mathbb{T}\times\mathbb{U}^{\rm r}}J(x,z;\tau,\theta,c)\\
&:= \sup_{(\tau,\theta,c) \in \mathbb{T}\times\mathbb{U}^{\rm r}}\Ex\left[ \int_0^\tau e^{-\rho t} (U(c_t)-\ell Z_t-\ell_c )dt- \alpha \int_0^\tau e^{-\rho t}dL_t^X+\int_\tau^\infty e^{-\rho t} U(c_t)dt- \beta \int_\tau^{\infty} e^{-\rho t}dL_t^X\right],\nonumber
\end{align}
where the state process $(X,Z)=(X_t,Z_t)_{t\geq0}$ satisfies the dynamics~\eqref{state-X}~and~\eqref{eq:Zt}.

\begin{remark}\label{rem:Lipvxz}
In fact, the stochastic control problems \eqref{eq_prob_IBP} and \eqref{eq:u} are equivalent in the sense that ${\rm w}(\mathrm{v}, z)=-\alpha ( z-\mathrm{v})^++v((\mathrm{v}-z)^+, z)$.
\end{remark}

\section{The Dual Problem with State Reflection}\label{sec:duality}

This section constructs a convex duality framework to analyze the hybrid optimal stopping-control problem \eqref{eq:u} with state reflection. The core methodology hinges on introducing a rigorously designed dual process; this construction delivers an analytically more tractable dual stopping formulation and lays the groundwork for characterizing optimal consumption rules, portfolio allocation strategies, and optimal retirement timing.

\subsection{The Dual Stopping Problem}

We first introduce the so-called reflected dual process (RDP). More precisely, let the process $Y^{\alpha}=(Y^{\alpha}_t)_{t\geq0}$ which takes values on $(0,\alpha]$ satisfy the following reflected dynamics, for $t>0$, 
\begin{align}\label{eq:Y_t}
    dY^{\alpha}_t=\rho Y^{\alpha}_tdt-\mu^\top (\sigma\sigma^\top)^{-1}\sigma Y^{\alpha}_tdW_t-dL_t^{\alpha},\quad Y^{\alpha}_0\in(0,\alpha],
\end{align}
where $L^{\alpha}=(L_t^{\alpha})_{t \geq 0}$ is a continuous and non-decreasing process with zero initial value, which increases strictly on the time set $\{t\geq 0;~Y_t^{\alpha}=\alpha\}$ only. Moreover,  for a given stopping time $\tau\in\mathbb{T}$, we define the process $(Y_t^{\tau,\beta})_{t\geq\tau}$ as $Y^{\beta}_\tau=Y_{\tau}^{\alpha}$, and for $t>\tau$,
 \begin{align}\label{eq:Y_tbeta}
    dY^{\tau,\beta}_t=\rho Y^{\tau,\beta}_tdt-\mu^\top (\sigma\sigma^\top)^{-1}\sigma Y^{\tau,\beta}_tdW_t-dL_t^{\tau,\beta},
\end{align}
where $L^{{\tau,\beta}}=(L_t^{{\tau,\beta}})_{t>\tau}$ is a continuous and non-decreasing process, which increases strictly on the time set $\{t> \tau;~Y_t^{{\tau,\beta}}=\beta\}$ only.
Then, we have the following characterization of admissible portfolio-consumption and retirement time
from the RDP defined by \eqref{eq:Y_t}-\eqref{eq:Y_tbeta}.

\begin{lemma}\label{lem:XY} 
Let Assumption {\bf(A$_Z$)} and $\left(\boldsymbol{A}_{\boldsymbol{\rho}}\right)$  hold. For any strategy pair $(\tau,\theta,c) \in \mathbb{T}\times\mathbb{U}^{\rm r}$, consider the corresponding controlled state process $(X, Z)=\left(X_t, Z_t\right)_{t \geq 0}$ given by \eqref{state-X} and \eqref{eq:Zt} with initial value $\left(X_0, Z_0\right)=(x, z) \in \mathbb{R}_{+}^2$, and the RDP given by \eqref{eq:Y_t}-\eqref{eq:Y_tbeta} with initial value $Y_0^{\alpha}=y\in(0,\alpha]$. Then, it holds that
\begin{align}\label{eq:contrain}
&\mathbb{E}\bigg[\int_0^{\tau} e^{-\rho t}\left(c_t +(\mu_X-r) Z_t-r_c\right)Y_t^{\alpha} d t
+\int_0^{\tau} e^{-\rho t} X_t d L_t^{\alpha}-\int_0^{\tau} e^{-\rho t} Y_t^\alpha d L_t^X \notag\\
&\qquad+\int_{\tau}^\infty e^{-\rho t}\left(c_t +\mu_X Z_t \right)Y_t^{\tau,\beta} d t
+\int_\tau^{\infty} e^{-\rho t} X_t d L_t^{\tau,\beta}-\int_{ \tau} ^{\infty} e^{-\rho t} Y_t^{{\tau,\beta}} d L_t^X\bigg] \leq x y.
\end{align}
\end{lemma}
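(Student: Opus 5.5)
The plan is to apply It\^o's product rule to the discounted product $e^{-\rho t}X_tY_t$, where $Y$ is the concatenated dual process: $Y_t=Y_t^{\alpha}$ on $[0,\tau]$ and $Y_t=Y_t^{\tau,\beta}$ on $(\tau,\infty)$. Concatenation is consistent because $Y^{\tau,\beta}_\tau=Y^\alpha_\tau$, so $Y$ is continuous at $\tau$. The first step is to rewrite the dynamics \eqref{state-X} in the form
\[
dX_t=\theta_t^\top\mu\,dt-c_t\,dt+(rZ_t+r_c)\mathds 1_{\{t<\tau\}}dt-\mu_ZZ_t\,dt+(\theta_t^\top\sigma-\sigma_ZZ_t\gamma^\top)dW_t+dL^X_t .
\]
Writing $\eta:=\sigma^{-1}\mu$, so that $\mu^\top(\sigma\sigma^\top)^{-1}\sigma=\eta^\top$, gives $dY_t=\rho Y_tdt-\eta^\top Y_tdW_t-dL_t$, where $L$ equals $L^\alpha$ before $\tau$ and $L^{\tau,\beta}$ after. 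The product rule then yields
\[
d(e^{-\rho t}X_tY_t)=e^{-\rho t}\Big[Y_t\big(-c_t+(r Z_t+r_c)\mathds 1_{\{t<\tau\}}-\mu_ZZ_t\big)dt+Y_t\,dL^X_t-X_t\,dL_t+dM_t\Big]+e^{-\rho t}\big(\theta_t^\top\mu-\eta^\top\sigma^\top\theta_t+\sigma_Z Z_t\gamma^\top\eta\big)Y_tdt .
\]
Here $M$ collects the $dW$ terms. The $\theta$-drift cancels, since $\eta^\top\sigma^\top\theta=\mu^\top\theta$, and the $\rho$ terms cancel against the discount. The $Z$-drift therefore combines into $-(\mu_Z-\sigma_Z\gamma^\top\sigma^{-1}\mu)Z_t=-\mu_XZ_t$.

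Integrating over $[0,T]$ and rearranging gives
\[
e^{-\rho T}X_TY_T+\int_0^T e^{-\rho t}\big(c_t+\mu_XZ_t-(rZ_t+r_c)\mathds 1_{\{t<\tau\}}\big)Y_tdt+\int_0^Te^{-\rho t}X_tdL_t-\int_0^Te^{-\rho t}Y_tdL^X_t=xy+\int_0^Te^{-\rho t}dM_t .
\]
Splitting the integrals at $\tau\wedge T$ produces exactly the integrands of \eqref{eq:contrain}, truncated at $T$, with $(\mu_X-r)Z_t-r_c$ before $\tau$ and $\mu_XZ_t$ after. The first term on the left is nonnegative because $X\ge0$ and $Y>0$, so it can be dropped and the equality becomes an inequality.

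The next step is to localize. Take stopping times $T_n\uparrow\infty$ such that the stochastic integral $\int_0^{\cdot\wedge T_n}e^{-\rho t}dM_t$ is a true martingale, and take expectations to get the truncated inequality with bound $xy$.

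The main obstacle is passing $n\to\infty$, because the integrands in \eqref{eq:contrain} are not all of one sign. The terms $c_tY_t$, $\mu_XZ_tY_t$ (which is nonnegative by {\bf(A$_Z$)}), $(\mu_X-r)Z_tY_t$ and $X_tdL_t$ are all nonnegative, so monotone convergence handles them. The terms $-r_cY_t$ and $-Y_tdL^X_t$ are the problem. Since $0<Y\le\beta\vee\alpha$, the term $\int_0^\infty e^{-\rho t}r_cY_tdt$ is integrable and dominated convergence applies. For $\int e^{-\rho t}Y_tdL^X_t$, which is bounded by $(\alpha\vee\beta)\int e^{-\rho t}dL^X_t$, I would first try to show that $\mathbb E\int_0^\infty e^{-\rho t}dL^X_t<\infty$. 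This should follow from the estimate $L^X_t\le x+\sup_{s\le t}|D_s|$ together with the square-integrability of $\theta$ and $c$ and the condition $\rho>2\mu_Z+\sigma_Z^2$ in $\left(\boldsymbol{A}_{\boldsymbol{\rho}}\right)$, which controls $\mathbb E\sup_{s\le t}Z_s^2$.

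If that estimate fails to be uniform in $t$, there is a fallback. If the expectation of the $dL^X$ integral is infinite, the left-hand side of \eqref{eq:contrain} equals $-\infty$ (or the inequality is interpreted in that extended sense), and the statement holds trivially. Otherwise dominated convergence applies. In both cases the inequality passes to the limit, using Fatou's lemma for the nonnegative parts, which yields \eqref{eq:contrain}.
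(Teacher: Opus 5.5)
Your argument follows the paper's proof. The paper likewise glues $Y^{\alpha}$ and $Y^{\tau,\beta}$ into one reflected process, applies It\^o's rule to $e^{-\rho t}X_tY_t$ (with the same cancellation of the $\theta$-drift that produces $\mu_X$), uses $(\boldsymbol{A}_{\boldsymbol{Z}})$ and $X,Y\ge 0$ for nonnegativity, localizes the stochastic integral, and finishes with Fatou's lemma and monotone convergence.

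The one step you should change is the limit passage for $\int e^{-\rho t}Y_t\,dL^X_t$.

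\emph{Why your primary route fails.} Admissibility only controls $\mathbb{E}\int_0^t(c_s+|\theta_s|^2)\,ds$ for each finite $t$, with no growth rate. So $\mathbb{E}\int_0^\infty e^{-\rho t}dL^X_t<\infty$ cannot follow from it or from $(\boldsymbol{A}_{\boldsymbol{\rho}})$. Take $\theta\equiv 0$ and a deterministic $c$ growing super-exponentially. Then $L^X_t\ge \int_0^t c_s\,ds-\int_0^t(rZ_s+r_c)\,ds-z$, and $\int_0^\infty e^{-\rho t}dL^X_t=+\infty$.

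\emph{Why the fallback does not cover it.} In the same example $\int_0^\infty e^{-\rho t}c_tY_t\,dt=+\infty$ as well, since $\ln Y_t$ decreases at most linearly. So the quantity inside the expectation in \eqref{eq:contrain} is $\infty-\infty$, not $-\infty$.

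\emph{The fix.} Never move the negative terms across. At a bounded localizing time $T_n$ the It\^o identity reads $e^{-\rho T_n}X_{T_n}Y_{T_n}+A_{T_n}=xy+B_{T_n}+N_{T_n}$. Here $A$ collects the nonnegative terms: $c_tY_t$, $(\mu_X-r)Z_tY_t$ before $\tau$, $\mu_XZ_tY_t$ after $\tau$, and $X_t\,dL_t$. Next, $B_T:=\int_0^{T\wedge\tau}e^{-\rho t}r_cY_t\,dt+\int_0^T e^{-\rho t}Y_t\,dL^X_t\ge 0$, and $\mathbb{E}[N_{T_n}]=0$. Taking expectations gives $\mathbb{E}[A_{T_n}]\le xy+\mathbb{E}[B_{T_n}]$ in $[0,\infty]$, with no integrability of $L^X$ needed. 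Monotone convergence applied to each side separately then gives $\mathbb{E}[A_\infty]\le xy+\mathbb{E}[B_\infty]$.

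This separated inequality is exactly what the paper's Fatou/monotone-convergence argument delivers, since it keeps the labor-income and $dL^X$ terms on the right. It coincides with \eqref{eq:contrain} whenever $\mathbb{E}[B_\infty]<\infty$, in particular whenever $\mathbb{E}\int_0^\infty e^{-\rho t}dL^X_t<\infty$. Otherwise it is the only meaningful reading of \eqref{eq:contrain}, in the paper's proof as well.
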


\begin{remark}
When the benchmark process $Z \equiv 0$ (constant zero benchmark) and the shortfall risk management cost  parameters $\alpha=\beta \to +\infty$, the local time processes vanish: $L^{\alpha}=L^{\tau,\beta}\equiv0$ (no reflection). Hence, the reflected dual process becomes the state price density process in the Black-Scholes model.
\end{remark}

Let $\widetilde{U}:(0,\beta]\to\R$ be the Legendre-Fenchel (LF) transform of the CRRA utility function $U(\cdot):\R_+\to\R_+$ given by \eqref{eq:utility}, i.e., $\tilde U(y):=\sup_{x> 0}\{U(x)-x y\}$ for $y\in(0,\beta]$. Then, we have
\begin{align}\label{eq:tildeU2}
    \tilde{U}(y)=
     \displaystyle \frac{1-p}{p}y^{\frac{p}{p-1}}.
\end{align}
Subsequently, we introduce the following (pure) optimal stopping problem, which is indeed the dual problem of the original hybrid stochastic control problem \eqref{eq:u}. For all $(y,z)\in (0,\alpha]\times\R_+$,
\begin{align}\label{eq:tildev}
& \tilde{v}(y, z):=\sup_{\tau\in\mathbb T}\tilde{J}(y,z;\tau)\\
&\quad=\sup_{\tau\in\mathbb T}\mathbb{E}\left[\int_0^\tau e^{-\rho t} \left(\tilde U(Y_t^\alpha)+(r-\mu_X) Z_t Y_t^\alpha-\ell Z_t+r_cY_t^\alpha-\ell_c\right)dt+e^{-\rho \tau}\tilde G(Y_\tau^\alpha,Z_\tau)
 \big|Y_0^\alpha=y,Z_0=z\right],\nonumber
\end{align}
where the post-retirement value function $\tilde{G}:(0, \beta] \times \mathbb{R}_+ \mapsto \mathbb{R}$ is defined by
\begin{align}\label{eq:tildeG}
 \tilde {G}(y,z):=\mathbb{E} \left[\int_0^{\infty} e^{-\rho t}\left(\tilde U(Y_t^{\beta})-\mu_X Z_tY_t^{\beta} \right)dt \mid Y_0^{\beta}=y,Z_0=z\right].
\end{align}
Here, the process $Y^{\beta}=(Y^{\beta}_t)_{t\geq0}$ taking values on $(0,\beta]$ satisfies RDP \eqref{eq:Y_t} with $\alpha$ replaced by $\beta$. 
By choosing immediate retirement in
\eqref{eq:tildev}, we obtain the lower bound $\tilde v(y,z)\geq \tilde G(y,z)$ for all $(y,z)\in(0,\alpha]\times\R_+$. The next result provides an explicit expression
for the post-retirement dual value function $\tilde G(\cdot)$, whose proof is similar to that of Lemma 6.1 in \cite{bo2026extended}, and hence we omit it here. 
\begin{lemma}\label{lem:structtildeG}
Let Assumption {\bf(A$_Z$)} and   $\left(\boldsymbol{A}_{\boldsymbol{\rho}}\right)$ hold. For any $(y, z) \in (0, \beta] \times \mathbb{R}_+$, we have
\begin{align}\label{eq:explicitformtildeG}
\tilde{G}(y, z) = 
\dfrac{(1 - p)^3}{p(\rho(1 - p) - \xi p)} y^{-\frac{p}{1 - p}} + \dfrac{(1 - p)^2}{\rho(1 - p) - \xi p} \beta^{-\frac{1}{1 - p}} y + z \left( y - \dfrac{\beta^{-(\kappa - 1)}}{\kappa} y^\kappa \right), 
\end{align}
where the constant $\kappa$ is the positive root of the quadratic equation $$\xi\kappa^2 + (\rho +\mu_X-\mu_Z - \xi)\kappa + \mu_Z - \rho = 0$$ with $\xi:=\mu^{\top}(\sigma \sigma^{\top})^{-1} \mu / 2$, which is given by
\begin{align*}
 \kappa = \frac{-(\rho +\mu_X-\mu_Z - \xi) + \sqrt{(\rho +\mu_X-\mu_Z- \xi)^2 + 4\xi(\rho -\mu_Z)}}{2\xi}\in(0,1). 
\end{align*}
\end{lemma}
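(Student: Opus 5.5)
The plan is to verify the explicit formula through a Feynman--Kac / verification argument for the linear Neumann problem that $\tilde G$ should satisfy. Write $\eta:=\sigma^\top(\sigma\sigma^\top)^{-1}\mu$, so that $|\eta|^2=2\xi$. The quadratic covariation of $Y^\beta$ and $Z$ is $-\sigma_Z\,\gamma^\top\eta\, yz\,dt=-\sigma_Z\gamma^\top\sigma^{-1}\mu\, yz\,dt=(\mu_X-\mu_Z)yz\,dt$. Away from the reflection, the generator of $(Y^\beta,Z)$ is therefore
\begin{align*}
\mathcal{L}\phi=\rho y\phi_y+\xi y^2\phi_{yy}+\mu_Z z\phi_z+\tfrac12\sigma_Z^2z^2\phi_{zz}+(\mu_X-\mu_Z)yz\phi_{yz}.
\end{align*}
Let $\phi$ denote the right-hand side of \eqref{eq:explicitformtildeG}. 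I will show that $\phi$ solves
\begin{align*}
\mathcal{L}\phi-\rho\phi+\tilde U(y)-\mu_X zy=0 \ \text{ on }(0,\beta)\times\R_+,\qquad \phi_y(\beta,z)=0 ,
\end{align*}
and then identify $\phi$ with $\tilde G$ by It\^o's formula.

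\emph{Step 1 (the candidate solves the PDE).} I split $\phi$ into a $z$-free part and a part linear in $z$.

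For the $z$-free part, the PDE reduces to the ODE $\xi y^2g''+\rho y g'-\rho g+\tilde U(y)=0$. The power $y^{-p/(1-p)}$ is a particular solution: substituting gives coefficient $\rho(1-p)-\xi p$ in the denominator, which is positive by $(\boldsymbol A_\rho)$. The linear term $Cy$ solves the homogeneous equation, since $\rho y-\rho y=0$.

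For the $z$-part, set $zh(y)$. The PDE reduces to
\begin{align*}
\xi y^2h''+(\rho+\mu_X-\mu_Z)yh'+(\mu_Z-\rho)h-\mu_X y=0 .
\end{align*}
The function $h=y$ is a particular solution. The function $y^\kappa$ solves the homogeneous equation exactly when $\kappa$ is a root of the stated quadratic. To see that $\kappa\in(0,1)$, let $f(\kappa)=\xi\kappa^2+(\rho+\mu_X-\mu_Z-\xi)\kappa+\mu_Z-\rho$. Then $f(0)=\mu_Z-\rho<0$ by $(\boldsymbol A_\rho)$, and $f(1)=\mu_X$, which is positive under $(\boldsymbol A_Z)$ (at least when $r>0$). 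Hence the positive root lies in $(0,1)$.

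The constants are fixed by the Neumann condition at $y=\beta$. The $y$-derivative of the first two terms is $\frac{(1-p)^2}{\rho(1-p)-\xi p}\bigl(\beta^{-1/(1-p)}-y^{-1/(1-p)}\bigr)$, which vanishes at $\beta$. The $y$-derivative of the $z$-part is $z(1-\beta^{1-\kappa}y^{\kappa-1})$, which also vanishes at $\beta$. I choose $y^\kappa$ rather than the negative root because the negative root would blow up as $y\downarrow0$ and destroy polynomial growth.

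\emph{Step 2 (verification).} Apply It\^o's formula to $e^{-\rho t}\phi(Y_t^\beta,Z_t)$ on $[0,T\wedge\tau_n]$, with $\tau_n$ a localizing sequence. The $dL^\beta$ term equals $-\phi_y(\beta,Z_t)\,dL_t^\beta=0$ because $L^\beta$ only increases on $\{Y^\beta_t=\beta\}$. The $dt$ term equals $-e^{-\rho t}\bigl(\tilde U(Y_t^\beta)-\mu_XZ_tY_t^\beta\bigr)dt$ by Step 1. Taking expectations gives
\begin{align*}
\phi(y,z)=\mathbb E\Big[\int_0^{T\wedge\tau_n}e^{-\rho t}\big(\tilde U(Y^\beta_t)-\mu_XZ_tY^\beta_t\big)dt+e^{-\rho (T\wedge\tau_n)}\phi(Y^\beta_{T\wedge\tau_n},Z_{T\wedge\tau_n})\Big].
\end{align*}
I then let $n\to\infty$ and afterwards $T\to\infty$, using dominated convergence.

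\emph{Main obstacle: integrability and transversality.} This is the step I expect to be hardest. The terms $z y$ and $zy^\kappa$ are controlled by $Y^\beta\le\beta$ together with $\mathbb E[Z_t^2]=z^2e^{(2\mu_Z+\sigma_Z^2)t}$, and $\rho>2\mu_Z+\sigma_Z^2$ from $(\boldsymbol A_\rho)$. The delicate term is $(Y^\beta)^{-p/(1-p)}$ when $p\in(0,1)$, because $Y^\beta$ can approach $0$.

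To handle it, I would use the Skorokhod representation of the reflected process. Write $\log Y^\beta_t=\log\hat Y_t-\sup_{s\le t}(\log\hat Y_s-\log\beta)^+$, where $\hat Y$ is the unreflected geometric Brownian motion. This bounds $(Y^\beta_t)^{-q}$ by $\hat Y_t^{-q}$ times an exponential of the running maximum of a Brownian motion with drift. Both factors have moments growing like $e^{ct}$ with $c<\rho$, and this is where the condition $\rho>\xi p/(1-p)$ enters; the estimate follows the lines of Lemma 6.1 in \cite{bo2026extended}. For $p<0$ the exponent $-p/(1-p)$ is positive, so boundedness by $\beta$ suffices.

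These bounds give both $e^{-\rho T}\mathbb E[|\phi(Y^\beta_T,Z_T)|]\to0$ and the integrability of the running reward. Hence $\phi=\tilde G$.
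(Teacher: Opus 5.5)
Your Step~1 is correct. That covers the generator with cross term $(\mu_X-\mu_Z)yz\,\phi_{yz}$, the particular solutions $y^{-p/(1-p)}$ and $zy$, the homogeneous pieces $y$ and $zy^\kappa$, the Neumann condition at $y=\beta$, and $\kappa\in(0,1)$ from $f(0)=\mu_Z-\rho<0<\mu_X=f(1)$ (note $\mu_X\ge r>0$). Verifying the candidate against the Neumann problem \eqref{eq:tildeGpde} is also the natural route; the paper omits its proof and defers to Lemma~6.1 of \cite{bo2026extended}.

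The gap is in the step you single out: the moment bound for $(Y^\beta_t)^{-q}$, with $q=p/(1-p)$ and $p\in(0,1)$. Your identity $(Y^\beta_t)^{-q}=\hat Y_t^{-q}\exp\{q\sup_{s\le t}(\log\hat Y_s-\log\beta)^+\}$ is fine. However, the claim that the second factor has moments growing at a rate below $\rho$ is false. That factor dominates $(\hat Y_t/\beta)^q$, whose mean is $(y/\beta)^q e^{(q(\rho-\xi)+q^2\xi)t}$. Moreover $q(\rho-\xi)+q^2\xi-\rho=(q-1)(\rho+q\xi)\ge 0$ as soon as $p\ge 1/2$ (e.g.\ $p=0.7$).

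H\"older cannot rescue the product either. For conjugate exponents $a,b$, the drifts cancel and $\|\hat Y_t^{-q}\|_{L^a}\,\|e^{q\sup_{s\le t}(\log\hat Y_s-\log\beta)^+}\|_{L^b}\ge \beta^{-q}e^{(a+b)q^2\xi t}\ge \beta^{-q}e^{4q^2\xi t}$. This route therefore needs $4q^2\xi<\rho$, which $(\boldsymbol A_\rho)$ (i.e.\ $q\xi<\rho$) does not provide. The two factors are strongly dependent, and their cancellation is exactly what makes the product integrable, so they cannot be estimated separately.

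The fix is to treat the product jointly; there are two ways.
\begin{itemize}
\item Note that $K_t:=-\log(Y^\beta_t/\beta)$ is a Brownian motion with drift $\xi-\rho$ and variance $2\xi$, reflected at $0$ (cf.\ \eqref{eq:Kt2} with $\alpha$ replaced by $\beta$). It\^o's formula for $e^{qK_t}$ plus Gronwall gives exponential growth rate at most $\max\{0,\,q(\xi-\rho)+q^2\xi\}$. Moreover $q(\xi-\rho)+q^2\xi<\rho$ is equivalent to $(q+1)(q\xi-\rho)<0$, i.e.\ $\rho(1-p)>\xi p$.
\item Alternatively, apply It\^o's formula to $e^{-\rho t}(Y^\beta_t)^{-q}$ exactly as in \eqref{eq:Y2}. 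The drift is $-c\,e^{-\rho t}(Y^\beta_t)^{-q}\,dt$ with $c=\frac{\rho(1-p)-\xi p}{(1-p)^2}>0$. The nonnegative local-time term $\frac{p}{1-p}\beta^{-\frac{1}{1-p}}e^{-\rho t}dL^\beta_t$ has expected total mass at most $\frac{p}{1-p}\beta^{-\frac{1}{1-p}}y$, since $\mathbb E\int_0^\infty e^{-\rho t}dL^\beta_t\le y$.
\end{itemize}
In the second version, localization and monotone convergence give $c\,\mathbb E\int_0^\infty e^{-\rho t}(Y^\beta_t)^{-q}dt\le y^{-q}+\frac{p}{1-p}\beta^{-\frac{1}{1-p}}y$. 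This yields integrability of the running reward and $\liminf_{T\to\infty}e^{-\rho T}\mathbb E[(Y^\beta_T)^{-q}]=0$. That is enough to let $T\to\infty$ along a subsequence in your verification identity.

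Also, $\mathbb E\int_0^T(Y^\beta_t)^{-2q}dt<\infty$ for each fixed $T$, since $K_t$ has finite exponential moments. Hence the stochastic integrals are true martingales on $[0,T]$, and the localization $\tau_n$ is unnecessary. With this replacement your argument is complete.
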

Using the explicit form \eqref{eq:explicitformtildeG}, it can be shown that (i) $y \mapsto \tilde G(y,z)$ is strictly decreasing and strictly convex, and (ii) $z\mapsto \tilde G(y,z)$ is decreasing. We also have the following lemma, which is useful for solving the optimal stopping problem \eqref{eq:tildev}. 
\begin{lemma}\label{lem:growth}
Let Assumption {\bf(A$_Z$)} and  $\left(\boldsymbol{A}_{\boldsymbol{\rho}}\right)$ hold. Then, for any $(y,z)\in(0,\alpha]\times\R_+$, we have
\begin{align*}
\Ex\left[\int_0^{\infty} e^{-\rho t} \left|\tilde U(Y_t^\alpha)+(r-\mu_X) Z_t Y_t^\alpha-\ell Z_t+r_cY_t^\alpha-\ell_c\right|dt\right]<+\infty,
\end{align*}
and
\begin{align*}
\Ex\left[\sup_{t\geq 0}\left|
e^{-\rho t}\tilde G(Y_t^\alpha,Z_t)\right|\right]<+\infty.
\end{align*}
Here, the process $Y^{\alpha}=(Y^{\alpha}_t)_{t\geq0}$ is the RDP given by  \eqref{eq:Y_t} with $Y_0=y$, and $Z=(Z_t)_{t\geq0}$ is the benchmark process given by \eqref{eq:Zt} with $Z_0=z$.
\end{lemma}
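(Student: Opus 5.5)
We bound each term of the integrand separately. We then reduce the supremum bound to moment estimates for powers of $Y^\alpha$ and for products $Z Y^\alpha$, using the explicit form of $\tilde G$ in Lemma~\ref{lem:structtildeG}.

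\emph{Step 1: moment estimates.} Write $\vartheta:=\sigma^{-1}\mu$, so that $|\vartheta|^2=2\xi$. Since $Y^\alpha$ is reflected downward at $\alpha$ (the process $L^\alpha$ only decreases it), comparison for SDEs gives $0<Y_t^\alpha\le \bar Y_t:=y\exp\big((\rho-\xi)t-\vartheta^\top W_t\big)$. For the negative power $q:=-p/(1-p)$ we need a lower bound instead. For this I would use the representation of the reflected geometric BM via the Skorokhod map in log scale: $\log Y_t^\alpha=\log y+(\rho-\xi)t-\vartheta^\top W_t-\sup_{s\le t}\big(\log y+(\rho-\xi)s-\vartheta^\top W_s-\log\alpha\big)^+$.
\begin{itemize}
\item If $q>0$ (i.e.\ $0<p<1$): $(Y_t^\alpha)^{q}$ is bounded by $\alpha^q$ times $\exp\{q\,\sup_{s\le t}(\cdots)\}$ with the drift $(\rho-\xi)s$ removed. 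Since $\rho>\xi p/(1-p)$, i.e.\ $\rho>q\xi$ up to sign conventions, the Laplace transform of the running maximum of a BM with drift yields $\Ex[(Y_t^\alpha)^{q}]\le C e^{\lambda t}$ with $\lambda<\rho$.
\item If $q<0$ (i.e.\ $p<0$): $(Y_t^\alpha)^{q}\le \alpha^{q}$ trivially, since $Y^\alpha\le\alpha$.
\end{itemize}
The linear terms are easy: $\Ex[Y_t^\alpha]\le \alpha$ and $\Ex[Z_t]=ze^{\mu_Zt}$. The product term satisfies $\Ex[Z_tY_t^\alpha]\le \alpha\, z e^{\mu_Z t}$, and $\mu_Z<\rho$ by $(\boldsymbol{A}_{\boldsymbol{\rho}})$. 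This proves the first claim by Fubini.

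\emph{Step 2: the supremum.} By Lemma~\ref{lem:structtildeG}, $\tilde G$ is a combination of $y^{q}$, $y$, $zy$ and $zy^\kappa$ with $\kappa\in(0,1)$. Hence
\[|e^{-\rho t}\tilde G(Y_t^\alpha,Z_t)|\le C e^{-\rho t}\big((Y_t^\alpha)^{q}+\alpha+Z_t(\alpha+\alpha^\kappa\beta^{1-\kappa}/\kappa)\big).\]
For the $Z$ part, write $e^{-\rho t}Z_t=z\exp\big((\mu_Z-\rho-\sigma_Z^2/2)t+\sigma_Z W^\gamma_t\big)$. Then $\sup_t e^{-\rho t}Z_t$ is the exponential of the supremum of a BM with negative drift $-(\rho-\mu_Z+\sigma_Z^2/2)$, whose supremum is exponentially distributed with parameter $2(\rho-\mu_Z+\sigma_Z^2/2)/\sigma_Z^2$. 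Its exponential moment is finite iff this parameter exceeds $1$, i.e.\ $\rho>\mu_Z$, which is implied by $\rho>2\mu_Z+\sigma_Z^2$. (The stronger assumption allows a Doob $L^2$ argument as an alternative: $\Ex[\sup_t e^{-2\rho t}Z_t^2]<\infty$ because $2\mu_Z+\sigma_Z^2<\rho<2\rho$.) For the $(Y^\alpha)^q$ part with $q>0$, I would bound $\sup_t e^{-\rho t}(Y_t^\alpha)^q$ by $\alpha^q\sup_t\exp\{-\rho t+q M_t\}$, where $M_t$ is the running max of $-\vartheta^\top W_s-(\rho-\xi)s$. Further bounding by $\alpha^q\sup_t e^{-\rho t+qM_t}\le \alpha^q\sup_{s}e^{q(-\vartheta^\top W_s-(\rho-\xi)s)-\rho s}\cdot$const needs care, so I would instead discretize: $\sup_{t}\le\sum_{n\ge0}\sup_{t\in[n,n+1]}$, bound each block's expectation by $Ce^{-(\rho-\lambda)n}$ using the Step~1 Laplace estimate for running maxima on $[0,n+1]$, and sum the geometric series.

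The main obstacle is this last uniform-in-time control of the negative power $(Y^\alpha)^{-p/(1-p)}$ when $0<p<1$, where the reflection enters through the running maximum. I expect the block decomposition combined with the condition $\rho(1-p)>\xi p$ to close it.
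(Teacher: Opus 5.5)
Your proposal is correct in substance, but it takes a different route from the paper.

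The paper's route is as follows. It splits the supremum into two one-variable pieces using the monotonicity sandwich $\tilde G(\alpha,z)\le\tilde G(y,z)\le\tilde G(y,0)$. It bounds $\Ex[\sup_{t\ge0}e^{-\rho t}Z_t]$ by BDG. It treats $e^{-\rho t}(Y_t^\alpha)^{p/(p-1)}$ by It\^o's formula: the drift coefficient $\frac{p\xi-\rho(1-p)}{(1-p)^2}$ is negative under $(\boldsymbol{A}_{\boldsymbol{\rho}})$, and the reflection term is controlled by $\Ex[\int_0^\infty e^{-\rho s}dL^\alpha_s]\le y$. BDG is then applied to the martingale part, and the first claim comes from the same identity.

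You instead bound the terms of the explicit $\tilde G$ one by one. For the $Z$-part you use the exact exponential law of the all-time supremum of a drifted Brownian motion, which only needs $\rho>\mu_Z$. For the negative power you combine exponential-moment growth with a unit-time block decomposition.

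The paper's route gives explicit constants and avoids discretization. However, its last BDG step requires $\Ex[\int_0^\infty e^{-2\rho s}(Y_s^\alpha)^{2p/(p-1)}ds]<\infty$. For $0<p<1$ and $m:=p/(1-p)$, the bound $Y_t^\alpha\le y\eta_t$ gives $\Ex[e^{-2\rho t}(Y_t^\alpha)^{-2m}]\ge y^{-2m}\exp\{(4m^2\xi-2m(\rho-\xi)-2\rho)t\}$. This is not integrable unless $\rho(1-p)>\xi p(1+p)$, which is strictly stronger than $(\boldsymbol{A}_{\boldsymbol{\rho}})$. Your block argument needs only $\rho(1-p)>\xi p$.

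Three things to tighten.

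First, $q=-p/(1-p)$ is negative exactly when $0<p<1$, so your labels ``$q>0$'' and ``$q<0$'' are swapped. The $p$-cases themselves are correctly matched to the hard and the trivial estimate.

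Second, keep the drift. Let $K_t:=\ln(\alpha/Y_t^\alpha)$, a Brownian motion with drift $\xi-\rho$ and variance rate $2\xi$ reflected at $0$. It\^o's formula gives $\Ex[(Y_t^\alpha)^{-m}]=\alpha^{-m}\Ex[e^{mK_t}]\le C(1+t)e^{\lambda^+t}$ with $\lambda=m^2\xi-m(\rho-\xi)$. Then $\lambda<\rho$ if and only if $\rho>m\xi$, which is exactly $(\boldsymbol{A}_{\boldsymbol{\rho}})$. Dropping the drift would give the rate $m^2\xi$, which can exceed $\rho$ when $p>1/2$.

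Third, $K_t$ is dominated by a running maximum of a drifted Brownian motion only in law at fixed $t$ (by time reversal), not pathwise. On the blocks, use the restart property of the Skorokhod map instead. With your $\vartheta$ and $B_t:=(\xi-\rho)t+\vartheta^\top W_t$, one has $\sup_{u\le1}K_{n+u}\le K_n+2\sup_{u\le1}|B_{n+u}-B_n|$, and this increment is independent of $\mathcal{F}_n$. Hence $\Ex[\sup_{t\in[n,n+1]}e^{-\rho t}(Y_t^\alpha)^{-m}]\le Ce^{-\rho n}\Ex[e^{mK_n}]\le C'(1+n)e^{-(\rho-\lambda^+)n}$, which is summable since $\lambda^+<\rho$.
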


Using the standard  theory of optimal stopping (see e.g., \citealt{shiryaev2008optimal}), we define the following continuation and stopping regions, respectively
\begin{align}\label{eq:continuous-stopping_region}
\mathcal{C} &:=\{ (y,z) \in (0,\alpha] \times \R_+;~\tilde v (y,z) > \tilde G(y,z)\},~~
\mathcal{S} :=\{ (y,z) \in (0,\alpha] \times \R_+;~\tilde v (y,z) = \tilde G(y,z)\}.
\end{align}
Using the integrability condition from Lemma \ref{lem:growth}, we can apply Theorem 3.3 in \cite{shiryaev2008optimal} to conclude that
\begin{align*}
\tau^*(y,z) = \inf \left\{ t\geq 0;~\left(Y_t^{ \alpha,y} ,Z_t^z \right) \in \mathcal{S} \right\},~~(y,z)\in(0,\alpha]\times\R_+
\end{align*}
is the optimal stopping time for problem \eqref{eq:tildev}.

\subsection{Structural Properties of Dual Value Function}

We now study the structural properties of the dual value function $\tilde{v}(y,z)$ defined by \eqref{eq:tildev}, which will be
used to characterize the optimal stopping region. The following lemma clarifies how the benchmark and the dual wealth state affect the dual value function, and they provide the monotonicity and convexity needed for the subsequent free-boundary analysis. 
\begin{lemma}\label{lem:struct}
Let Assumption {\bf(A$_Z$)} and  $\left(\boldsymbol{A}_{\boldsymbol{\rho}}\right)$ hold. Then, we have
\begin{itemize}
    \item[{\rm(i)}]for any $y\in(0,\alpha]$, the dual function $z \mapsto \tilde{v}(y, z)$ is decreasing; 
    \item[{\rm(ii)}] for any $z\in\R_+$, the dual function $y \mapsto \tilde{v}(y, z)$ is strictly decreasing and strictly  convex.
\end{itemize} 
\end{lemma}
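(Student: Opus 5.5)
Part (i) is a pathwise comparison for each fixed stopping time, and part (ii) rests on an explicit representation of the reflected dual process $Y^{\alpha}$ as a function of its starting point. In both cases the supremum over $\tau\in\mathbb T$ in \eqref{eq:tildev} then carries the property over to $\tilde v$.

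For (i), I use that $Z^z_t=z\,e^{(\mu_Z-\sigma_Z^2/2)t+\sigma_Z W^\gamma_t}$ is linear and increasing in $z$, while $Y^\alpha$ does not depend on $z$. Fix $\tau\in\mathbb T$ and compare the running reward in $\tilde J(y,z;\tau)$ for $z_1\le z_2$. The only $z$-dependent part of the running reward is $((r-\mu_X)Y^\alpha_t-\ell)Z^z_t$. Its coefficient is strictly negative by Assumption (A$_Z$) together with $Y^\alpha_t>0$ and $\ell>0$. Hence this part is pathwise nonincreasing in $z$. The terminal term $e^{-\rho\tau}\tilde G(Y^\alpha_\tau,Z^z_\tau)$ is also nonincreasing in $z$, because $z\mapsto\tilde G(y,z)$ is decreasing by the explicit form \eqref{eq:explicitformtildeG}. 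Lemma \ref{lem:growth} makes all expectations finite, so $\tilde J(y,z_1;\tau)\ge\tilde J(y,z_2;\tau)$. Taking the supremum over $\tau$ gives (i).

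For (ii), I write $M_t:=(\rho-\xi)t-\mu^\top(\sigma\sigma^\top)^{-1}\sigma W_t$ and solve the Skorokhod problem in logarithmic scale. This gives the explicit representation
\begin{align*}
Y^{\alpha,y}_t=e^{M_t}\min\Bigl\{y,\ \alpha\inf_{s\le t}e^{-M_s}\Bigr\}.
\end{align*}
So $y\mapsto Y^{\alpha,y}_t$ is nondecreasing and concave. Moreover, $Y^{\alpha,y}_t=ye^{M_t}$ before the hitting time $T_y$ of $\alpha$, and it does not depend on $y$ afterwards, with $\partial_yY^{\alpha,y}_t=e^{M_t}\mathds 1_{\{t<T_y\}}$. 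For fixed $\tau$, I would differentiate $\tilde J$ in $y$ under the expectation, justified by dominated convergence and Lemma \ref{lem:growth}:
\begin{align*}
\partial_y\tilde J(y,z;\tau)=\Ex\Bigl[\int_0^{\tau\wedge T_y}e^{-\rho t}e^{M_t}\bigl(\tilde U'(Y_t)+(r-\mu_X)Z_t+r_c\bigr)dt+e^{-\rho\tau}\tilde G_y(Y_\tau,Z_\tau)e^{M_\tau}\mathds 1_{\{\tau<T_y\}}\Bigr].
\end{align*}
Strict decrease would follow from showing this is negative. Since $\tilde U'(y)=-y^{-1/(1-p)}\le-\alpha^{-1/(1-p)}$ on $(0,\alpha]$, I need to control the positive labor-income term $r_c$ against $\tilde U'$ and against the boundary contribution. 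I would do this with Itô's formula for $e^{-\rho t}Y_t$, using $d(e^{-\rho t}Y_t)=-e^{-\rho t}dL^\alpha_t+dM^{\rm mart}$. This rewrites the linear-in-$Y$ terms as boundary local-time terms, whose sign is controlled via the term $\tilde G_y$ and $\tilde G(\cdot,z)$ being decreasing.

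Convexity is the step I expect to be the main obstacle. The map $y\mapsto\tilde U(Y^{\alpha,y}_t)$ is convex, as a convex decreasing function composed with a concave nondecreasing one, and the same holds for $\tilde G(Y^{\alpha,y}_\tau,z)$ by \eqref{eq:explicitformtildeG}. However, the term $r_cY^{\alpha,y}_t$ is concave in $y$, so the pathwise argument breaks down for it. My first attempt is to show that the map $y\mapsto\Ex[\int_0^{\tau}e^{-\rho t}Y^{\alpha,y}_tdt]$ is affine plus a convex local-time correction, again via the Itô identity above. The idea is that $\Ex[\int e^{-\rho t}dL^\alpha_t]$ is expressed through $(\log y-\log\alpha+\sup_sM_s)^+$, so that the labor-income contribution combines with the $\tilde U$ part into a convex functional. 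Strictness then comes from $\tilde U''>0$ on the event $\{t<T_y\}$, which has positive measure. If the pathwise route fails, my fallback is to prove convexity through the variational inequality. Its operator in $y$ is $\xi y^2\partial_{yy}+\rho y\partial_y-\rho$ with Neumann condition $\partial_y\tilde v(\alpha,z)=0$. I would show that $\partial_y\tilde v$ satisfies a comparison principle forcing $\partial_{yy}\tilde v>0$, using that the source term $\tilde U''(y)>0$ and that the obstacle $\tilde G$ is convex.
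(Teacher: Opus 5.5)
Your part (i) is correct and is the paper's argument: the paper inserts $\tau^*(y,z_2)$ into the problem at $z_1$, and fixing $\tau$ and then taking the supremum is equivalent. For (ii) you use the same Skorokhod representation as the paper, $Y^{\alpha,y}_t=\eta_t\min\{y,\alpha e^{-f_t}\}$. You also correctly identify the weak point: $y\mapsto r_cY^{\alpha,y}_t$ is nondecreasing and concave, so the pathwise comparison fails for the income term. The paper's proof does not get past this either. Its monotonicity step omits the term $r_cY^{\alpha,y}_s$ altogether. Its convexity step asserts $\delta^r_s\le0$, but on $\{\hat k<f_s<k_1\}$ one has $\delta^r_s=((r-\mu_X)Z_s+r_c)\lambda_1(\alpha e^{-f_s}-y_1)\eta_s$, which is positive whenever $(r-\mu_X)Z_s+r_c>0$ (e.g.\ for $z=0$).

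The gap in your proposal is that neither repair (rewriting the linear terms via It\^o's formula for $e^{-\rho t}Y_t$, or a comparison principle for $\tilde v_y$) is carried out. Neither can succeed, because (ii) is false as stated once $r_c$ is large. Take $z=0$, so $Z\equiv0$ and the dual problem is exactly the one solved in Lemma~\ref{lem:ex1}, and take $r_c\alpha>\ell_c$. Then $\tilde v(\cdot,0)=\tilde G(\cdot,0)+P$ on $(y_B,\alpha]$ with $0<y_B<\ell_c/r_c$, so
\[
\tilde v_{yy}(\alpha,0)=\tilde G_{yy}(\alpha,0)+\frac{\rho(\rho+\xi)}{\xi^2}C_2\,\alpha^{-\frac{\rho}{\xi}-2}-\frac{r_c}{(\rho+\xi)\alpha},\qquad 0<C_2<\frac{\xi\ell_c}{\rho(\rho+\xi)}\Big(\frac{\ell_c}{r_c}\Big)^{\frac{\rho}{\xi}}.
\]
Since $\tilde G_{yy}(\alpha,0)$ does not involve $r_c$, $\tilde v_{yy}(\alpha,0)<0$ for $r_c$ large. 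Together with the Neumann condition $\tilde v_y(\alpha,0)=0$, this makes $\tilde v(\cdot,0)$ strictly concave and strictly increasing on a left neighbourhood of $\alpha$. The cause is already visible without stopping: $r_c\,\Ex\big[\int_0^\infty e^{-\rho t}Y^{\alpha,y}_t\,dt\big]=\frac{r_cy}{\rho+\xi}\big(1+\ln\frac{\alpha}{y}\big)$ has second derivative $-\frac{r_c}{(\rho+\xi)y}$, which the $r_c$-independent convexity coming from $\tilde U$ cannot offset in general.

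So an extra hypothesis is unavoidable. A simple sufficient one is $r_c\le\alpha^{-1/(1-p)}$, i.e.\ $\tilde U'(\alpha)+r_c\le0$. Under it, since $(r-\mu_X)Z_s\le0$, each map $u\mapsto\tilde U(u)+((r-\mu_X)Z_s+r_c)u$ is strictly convex and strictly decreasing on $(0,\alpha]$, and so is $u\mapsto\tilde G(u,Z_\tau)$. Composing with the concave nondecreasing map $y\mapsto Y^{\alpha,y}_s$ then makes $y\mapsto\tilde J(y,z;\tau)$ convex and nonincreasing for every $\tau$; this is exactly your composition argument, which now goes through. Running the paper's case analysis with this function in place of $\tilde U$, and with $\tau$ optimal for $\lambda_1y_1+\lambda_2y_2$, gives strict convexity of $\tilde v(\cdot,z)$. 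Strict decrease then follows from strict convexity plus monotonicity.
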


The following lemma verifies the continuity of the dual value function, thereby establishing the continuous-pass condition for $\tilde v$  in Eq.~\eqref{eq:tildevGyBz}.  

\begin{lemma}\label{lem:continuous}
Let Assumption {\bf(A$_Z$)} and  $\left(\boldsymbol{A}_{\boldsymbol{\rho}}\right)$ hold. Then, the dual value function $\tilde{v}(y,z)$ is continuous in $(y,z)\in (0, \alpha] \times \R_+$.
\end{lemma}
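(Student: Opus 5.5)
We prove joint continuity through a uniform modulus estimate. For any fixed stopping time $\tau$, we compare $\tilde J(y,z;\tau)$ with $\tilde J(y',z';\tau)$ and bound the difference by something independent of $\tau$ that tends to zero as $(y',z')\to(y,z)$. Since $|\sup_\tau \tilde J(y,z;\tau)-\sup_\tau\tilde J(y',z';\tau)|\le \sup_\tau|\tilde J(y,z;\tau)-\tilde J(y',z';\tau)|$, continuity of $\tilde v$ then follows.

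\textbf{Step 1: Lipschitz dependence of the state processes.} Take the same Brownian motion for both initial conditions.

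For $Z$, the dynamics \eqref{eq:Zt} are linear, so $Z^{z'}_t-Z^z_t=(z'-z)\mathcal{E}_t$, where $\mathcal{E}$ is the geometric Brownian motion with $\mathcal{E}_0=1$.

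For the RDP \eqref{eq:Y_t}, we use the explicit Skorokhod representation in logarithmic coordinates. Write $Y^{\alpha,y}_t=\alpha\exp(-R_t)$, where $R\ge0$ is the reflection at $0$ of the Brownian motion with drift $\log(\alpha/y)-(\rho+\xi)t+\ldots$. The Skorokhod map is $1$-Lipschitz in the sup norm, which gives
\[
|\log Y^{\alpha,y}_t-\log Y^{\alpha,y'}_t|\le|\log y-\log y'| .
\]
Two consequences follow. Monotonicity of the Skorokhod map gives $Y^{\alpha,y'}_t\le Y^{\alpha,y}_t$ for $y'\le y$. We also obtain the ratio bound $Y^{\alpha,y'}_t/Y^{\alpha,y}_t\in[y'/y,\,y/y']$ for all $t$, uniformly in $\omega$.

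\textbf{Step 2: Bounding the running reward and the terminal reward.} Denote the running reward by $f(y,z)=\tilde U(y)+(r-\mu_X)zy-\ell z+r_cy-\ell_c$.

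For the power part of $f$, the ratio bound gives $|\tilde U(Y^{y'}_t)-\tilde U(Y^y_t)|\le |(y/y')^{|p/(p-1)|}-1|\,\tilde U$-type terms. More precisely, $|\tilde U(Y^{y'}_t)-\tilde U(Y^{y}_t)|\le \varepsilon(y,y')\,|\tilde U(Y^y_t)|$ with $\varepsilon\to0$. The terms linear in $Y$ and in $ZY$ are handled the same way, together with the identity $Z^{z'}-Z^z=(z'-z)\mathcal{E}$.

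Hence
\[
\Big|\Ex\int_0^\tau e^{-\rho t}\big(f(Y^{y'}_t,Z^{z'}_t)-f(Y^y_t,Z^z_t)\big)dt\Big|\le \varepsilon\,\Ex\int_0^\infty e^{-\rho t}|\cdots|dt+|z'-z|\,C .
\]
The right-hand side is finite by Lemma \ref{lem:growth}, with $\Ex\int e^{-\rho t}\mathcal{E}_t(1+Y_t)\,dt<\infty$ guaranteed by $\left(\boldsymbol{A}_{\boldsymbol{\rho}}\right)$.

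For the terminal reward $e^{-\rho\tau}\tilde G(Y_\tau,Z_\tau)$, we use the explicit formula \eqref{eq:explicitformtildeG}. It is a finite sum of terms $y^{a}$ and $z y^{b}$, and for such terms the same ratio bound yields
\[
|\tilde G(Y^{y'}_\tau,Z^{z'}_\tau)-\tilde G(Y^y_\tau,Z^z_\tau)|\le \varepsilon\, H(Y^y_\tau,Z^z_\tau)+|z'-z|\,\mathcal{E}_\tau K(Y_\tau),
\]
where $H$ is a sum of absolute values of those monomials. Lemma \ref{lem:growth}, or rather its proof applied monomial-wise, gives $\Ex\sup_t e^{-\rho t}H(Y_t,Z_t)<\infty$. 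This makes the bound uniform in $\tau$, including on $\{\tau=\infty\}$, where the term vanishes.

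\textbf{Main obstacle.} The difficulty is the uniform-in-$\tau$ control of the terminal term, since it requires $\Ex\sup_t$ bounds for each monomial separately rather than for $\tilde G$ itself, where cancellations could occur. I would first redo the estimate behind Lemma \ref{lem:growth}. Since $Y^\alpha\le\alpha$, the positive powers $y$ and $y^\kappa$ are bounded, and the $z$-factor is controlled via $\Ex\sup_t e^{-\rho t}Z_t<\infty$, using $\rho>2\mu_Z+\sigma_Z^2$ and Doob's inequality.

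The negative power $y^{-p/(1-p)}$ (when $p>0$) is the genuinely unbounded term. For it, write $Y=\alpha e^{-R}$ with $R\le$ a running-maximum expression. Then $e^{-\rho t}Y_t^{-p/(1-p)}$ has its supremum controlled by the Doob maximal inequality applied to an exponential martingale, using $\rho>\xi p/(1-p)$ from $\left(\boldsymbol{A}_{\boldsymbol{\rho}}\right)$.
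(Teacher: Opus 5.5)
Your proof is correct, but it is organized differently from the paper's. The paper handles the two variables separately. For fixed $y$ it bounds $\sup_{\tau}|\tilde J(y,z;\tau)-\tilde J(y,z_n;\tau)|$, using that the running reward and $\tilde G$ are affine in $z$ with coefficients bounded on $(0,\alpha]$. This is exactly your $z$-part, and it gives a Lipschitz bound in $z$ that is uniform in $y$. Continuity in $y$ is then taken from Lemma~\ref{prop:tvy}, i.e.\ from the existence and probabilistic representation of $\tilde v_y$. The two are combined using the monotonicity in $y$ from Lemma~\ref{lem:struct}.

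You instead control the $y$-perturbation pathwise. From $Y^{\alpha,y}_t=\alpha e^{-\max(k,f_t)}\eta_t$ with $k=\ln(\alpha/y)$ (the representation already used in the proof of Lemma~\ref{lem:struct}), the ratio $Y^{\alpha,y'}_t/Y^{\alpha,y}_t$ lies between $y'/y$ and $y/y'$ for all $(t,\omega)$. Hence each monomial of the running reward and of $\tilde G$ changes by a vanishing factor times a majorant evaluated at the base point, uniformly in $\tau$. This gives joint continuity in one step, in fact local Lipschitz bounds on compact subsets of $(0,\alpha]\times\R_+$, the edge $y=\alpha$ included. It also frees the lemma from the comparatively delicate derivative formula of Lemma~\ref{prop:tvy}, on which the paper's ordering makes it depend. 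The paper's route is shorter only because that formula is needed anyway for smooth fit.

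Some side remarks. The drift of $-\ln(Y^{\alpha}_t/\alpha)$ is $(\xi-\rho)t$, not $-(\rho+\xi)t$. The constant $1$ in your log-Lipschitz bound is best read off from $|\max(k,f_t)-\max(k',f_t)|\le|k-k'|$, since the general Skorokhod map is only $2$-Lipschitz. Neither point affects the argument.

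More usefully, your \emph{main obstacle} disappears once you note that you only need $\sup_{\tau}\Ex[e^{-\rho\tau}H(Y_\tau,Z_\tau)]<\infty$, not $\Ex[\sup_t e^{-\rho t}H(Y_t,Z_t)]$.
\begin{itemize}
\item $e^{-\rho t}Z_t$ is a nonnegative supermartingale.
\item For $p\in(0,1)$, It\^o's formula (as in the proof of Lemma~\ref{lem:growth}) shows the following. The process $e^{-\rho t}(Y^{\alpha}_t)^{p/(p-1)}$ minus the increasing term $\frac{p}{1-p}\alpha^{\frac{1}{p-1}}\int_0^t e^{-\rho s}dL^{\alpha}_s$ is a supermartingale (true, by Fatou). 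Its $dt$-coefficient is negative precisely when $\rho>\xi p/(1-p)$. The increasing term has expectation at most $\frac{p}{1-p}\alpha^{\frac{1}{p-1}}y$.
\item Optional sampling and Fatou then give the bound uniformly in $\tau$.
\end{itemize}
Your Doob sketch for the negative power is too loose as stated. Because $R$ contains a running maximum, $e^{-\rho t}Y_t^{-p/(1-p)}$ is not simply a discounted exponential martingale, and a one-line maximal inequality does not obviously give the bound under $(\boldsymbol{A}_{\boldsymbol{\rho}})$ alone.
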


We next provide the probabilistic expression of the first partial derivative of the value function with respect to 
$y$ in the following lemma, which ensures the smooth-fit condition of the dual value function $\tilde v$ in \eqref{eq:tildevGyBz}. 
\begin{lemma}\label{prop:tvy}
Let Assumptions $\left(\boldsymbol{A}_{\boldsymbol{Z}}\right)$ and $\left(\boldsymbol{A}_{\boldsymbol{\rho}}\right)$ hold. Introduce the $\mathbb{F}$-stopping time $\tau_f^{k}:=\inf\{t\geq 0;~(\rho-\xi) t+\sqrt{2\xi} W_t=k\}$ for $k\geq0$ and recall that $\xi=\mu^{\top}(\sigma \sigma^{\top})^{-1} \mu / 2$. Let $(y,z)\in (0,\alpha]\times \R_+$ and $\tau^*(y,z):=\inf\{ t\geq 0;~ \tilde v(Y_t^{\alpha,y},Z_t^z)=\tilde G(Y_t^{\alpha,y},Z_t^z)\}$. Then, we have
\begin{align}\label{eq:tildevy}
  \tilde v_y(y,z)&=\frac{1}{y}\Ex\Bigg[\int_0^{\tau^*(y,z)\wedge\tau_f^{-\ln\frac{y}{\alpha}}} e^{-\rho s}
    \left(\tilde U'\left(y \eta_s\right)+(r-\mu_X) Z_s^z+r_c\right)y \eta_s ds\nonumber\\
    &\qquad\quad+e^{-\rho \tau^*(y,z)}\tilde G_y\left(y \eta_{\tau^*(y,z)},Z_{\tau^*(y,z)}^z\right)y \eta_{\tau^*(y,z)}{\mathds{1}}_{\left\{\tau^*(y,z)<\tau_f^{-\ln\frac{y}{\alpha}}\right\}}\Bigg]
\end{align}
with the process 
$\eta_t:=e^{(\rho-\xi)t-\mu^\top (\sigma \sigma^\top)^{-1}\sigma W_t}$ for $t\geq 0$.
\end{lemma}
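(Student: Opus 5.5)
The plan rests on an explicit pathwise representation of the reflected dual process. Writing $\log Y^{\alpha,y}_t$ as a Brownian motion with drift reflected downward at $\log\alpha$, the Skorokhod map gives
\begin{align*}
Y^{\alpha,y}_t=\frac{y\eta_t}{1\vee \sup_{s\leq t}(y\eta_s/\alpha)},\qquad t\geq 0 .
\end{align*}
Since $\log\eta_t=(\rho-\xi)t-\mu^\top(\sigma\sigma^\top)^{-1}\sigma W_t$ has the law of $(\rho-\xi)t+\sqrt{2\xi}\,\widetilde W_t$ for a scalar BM $\widetilde W$, the first time $y\eta$ reaches $\alpha$ is (in law) $\tau_f^{-\ln(y/\alpha)}$. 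Two facts follow:
\begin{itemize}
\item for $t<\tau_f^{-\ln(y/\alpha)}$ we have $Y^{\alpha,y}_t=y\eta_t$, so $\partial_y Y_t^{\alpha,y}=\eta_t=Y_t^{\alpha,y}/y$;
\item for $t\geq\tau_f^{-\ln(y/\alpha)}$ we have $Y_t^{\alpha,y}=\alpha\eta_t/\sup_{s\leq t}\eta_s$, which no longer depends on $y$ locally, so $\partial_yY_t=0$.
\end{itemize}
This is exactly why the integrand in \eqref{eq:tildevy} is cut at $\tau^*\wedge\tau_f^{-\ln(y/\alpha)}$, and why the terminal term only appears on $\{\tau^*<\tau_f\}$. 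The process $Z$ does not depend on $y$.

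Next I run an envelope-type argument. Fix $(y,z)$ and write $\tau^*=\tau^*(y,z)$. Define $D(y;\tau)$ as the right-hand side of \eqref{eq:tildevy} with $\tau^*$ replaced by a generic $\tau$; this is the formal $y$-derivative of $\tilde J(y,z;\tau)$ computed with the two facts above.
\begin{itemize}
\item Because $\tau^*$ is admissible for the problem started at $y\pm h$,
\begin{align*}
\tilde v(y+h,z)-\tilde v(y,z)&\geq \tilde J(y+h,z;\tau^*)-\tilde J(y,z;\tau^*),\\
\tilde v(y,z)-\tilde v(y-h,z)&\leq \tilde J(y,z;\tau^*)-\tilde J(y-h,z;\tau^*).
\end{align*}
\item Divide by $h$ and apply the mean value theorem pathwise to $\tilde U$ and $\tilde G(\cdot,z)$, which are $C^1$ and convex on compacts of $(0,\beta]$.
\item Pass to the limit by dominated convergence. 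Lemma~\ref{lem:growth}, together with the explicit form \eqref{eq:explicitformtildeG} of $\tilde G_y$, supplies the integrable dominating functions, because $\eta_t$-type factors are controlled under $(\boldsymbol A_\rho)$.
\end{itemize}
This yields $\partial^+_y\tilde v\geq D(y;\tau^*)\geq \partial^-_y\tilde v$. Combined with convexity from Lemma~\ref{lem:struct}, which gives $\partial^-_y\tilde v\leq\partial^+_y\tilde v$, it remains to prove the single inequality $\partial_y^+\tilde v(y,z)\leq D(y;\tau^*)$. (At $y=\alpha$ only the left derivative is meaningful, and $\tau_f^0=0$ gives $\tilde v_y(\alpha,z)=0$, consistent with the Neumann condition.)

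For the remaining inequality I use the stopping times $\tau_h:=\tau^*(y+h,z)$, which are optimal at $y+h$:
\begin{align*}
\tilde v(y+h,z)-\tilde v(y,z)\leq \tilde J(y+h,z;\tau_h)-\tilde J(y,z;\tau_h)=h\,D(y;\tau_h)+o(h).
\end{align*}
The $o(h)$ here is uniform in $\tau$, because the same dominating functions work for every stopping time. So it suffices to show $D(y;\tau_h)\to D(y;\tau^*)$ as $h\downarrow0$, and I expect this step to be the main obstacle. I would argue as follows.
\begin{itemize}
\item Pathwise, $h\mapsto Y^{\alpha,y+h}_t$ is increasing and converges to $Y^{\alpha,y}_t$ uniformly on compacts.
\item Lemma~\ref{lem:continuous} makes $\mathcal S$ closed, so $\liminf_h\tau_h\geq\tau^*$.
\item For the reverse inequality I would use that $\tilde v-\tilde G$ is continuous and that, by the strong Markov property, the diffusion $\log(y\eta)$ immediately enters the interior of $\mathcal S$ after hitting it. Regularity of the boundary points follows from nondegeneracy of $\log\eta$ together with the monotone structure of $\mathcal S$ in $y$ inherited from Lemma~\ref{lem:struct}. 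This gives $\tau_h\to\tau^*$ a.s.
\item A fallback if boundary regularity is delicate: use the monotonicity of $h\mapsto\tau_h$, which follows from the ordering of $Y^{\alpha,y+h}$ and the shape of $\mathcal S$.
\end{itemize}
The indicator $\mathds 1_{\{\tau_h<\tau_f\}}$ converges except on $\{\tau^*=\tau_f\}$, and this event is null because $\tau_f$ is a hitting time of a level by a nondegenerate diffusion. Dominated convergence then closes the argument.
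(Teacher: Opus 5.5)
Your route is the paper's. You use the Skorokhod representation $Y^{\alpha,y}_t=\alpha e^{-k-(f_t-k)^+}\eta_t$, take one bound from $\tau^*(y,z)$ and the other from the optimal time of the perturbed problem, and pass to a limit. You are more careful than the paper here: it goes from its two bounds to the limit without showing $\tau^*(y\pm\epsilon,z)\to\tau^*(y,z)$.

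\textbf{First gap.} Your reduction to $D(y;\tau_h)\to D(y;\tau^*)$ fails, because $\tilde J(y+h,z;\tau)-\tilde J(y,z;\tau)=hD(y;\tau)+o(h)$ is \emph{not} uniform in $\tau$. Set $k(y):=\ln(\alpha/y)$. On $[\tau_f^{k(y+h)},\tau_f^{k(y)})$ the path $Y^{\alpha,y+h}$ has already been reflected but $Y^{\alpha,y}=y\eta$ has not. Hence $Y^{\alpha,y+h}_t-Y^{\alpha,y}_t=c_t h\eta_t$, with $c_t\in(0,1]$ determined by the running maximum of $\eta$. Let $\tau$ be the first time $(y+h/2)\eta$ reaches $\alpha$, so $c_\tau=1/2$. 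The terminal difference quotient is then about $\frac{\alpha}{2y}\tilde G_y(\alpha,Z_\tau)$, while $D(y;\tau)$ charges $\frac{\alpha}{y}\tilde G_y(\alpha,Z_\tau)$. For $\alpha<\beta$ we have $\tilde G_y(\alpha,\cdot)<0$, so the discrepancy is of order $h$, not $o(h)$, and has the wrong sign for your upper bound. Dominated convergence controls each fixed $\tau$, not a supremum over $\tau$.

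The bound the paper actually writes fixes this. Use convexity of $\tilde U$ and $\tilde G(\cdot,z)$ with the tangent taken at the \emph{perturbed} point. This gives $\tilde J(y+h,z;\tau)-\tilde J(y,z;\tau)\le hD(y+h;\tau)+o(h)$ uniformly in $\tau$. Only the linear term leaves an error, and it is a time integral over the shrinking window. What you then need is $D(y+h;\tau_h)\to D(y;\tau^*)$. Alternatively, drop uniformity and pass to the limit of the difference quotient along $\tau_h$ pathwise, once $\tau_h\to\tau^*$ a.s. is known.

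\textbf{Second gap.} The convergence $\tau_h\to\tau^*$ rests on structure you cannot take from the lemmas you cite. Lemma~\ref{lem:struct} says nothing about the shape of $\mathcal S$. In the paper, $\mathcal S=\{y\le y_B(z)\}$ comes from Lemmas~\ref{lem:derivativev_G} and~\ref{lem:sectio}, which use \eqref{eq:tildevy}. The $y$-continuity in Lemma~\ref{lem:continuous} is also deduced from the present lemma. Invoking either is circular.

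You can get the down-set property directly. Dynkin's formula for the explicit $\tilde G$, which solves \eqref{eq:tildeGpde}, gives $\tilde v-\tilde G=\sup_{\tau}\Ex[\int_0^\tau e^{-\rho s}((rZ_s+r_c)Y^{\alpha}_s-\ell Z_s-\ell_c)ds-\int_0^\tau e^{-\rho s}\tilde G_y(\alpha,Z_s)dL^{\alpha}_s]$. Both terms are pathwise nondecreasing in $y$, because $Y^{\alpha,y}$ and the measure $dL^{\alpha,y}=\alpha\, d(f-k(y))^+$ are. Closedness of $\mathcal S$ needs only lower semicontinuity of $\tilde v$, which is a supremum of continuous functions.

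This yields $\tau_h\ge\tau^*$ and monotonicity in $h$. Contrary to your fallback, that only gives existence of $\lim_{h\downarrow0}\tau_h$, not its value. Nothing is known about $y_B$ beyond upper semicontinuity, so an abstract regularity claim is not enough. The relevant nondegeneracy is that of the pair $(\ln\eta,Z)$, not of $\ln\eta$ alone.

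The argument that identifies the limit runs as follows.
\begin{itemize}
\item Suppose $\tau^*<\tau_f^{k(y)}$ and $\sigma_Z>0$. Then $Z$ returns to the level $Z_{\tau^*}$ at times accumulating at $\tau^*$.
\item Along those times, $\ln Y^{\alpha,y}$ moves by a linear drift plus a Brownian motion independent of $W^\gamma$. This Brownian part is nontrivial exactly by the non-collinearity assumption on $\gamma$ in Section~\ref{sec:model}.
\item Blumenthal's 0-1 law then gives such times $t$ with $Y^{\alpha,y}_t<Y^{\alpha,y}_{\tau^*}\le y_B(Z_t)$.
\item Uniform convergence of $Y^{\alpha,y+h}$ on compacts, together with the down-set property, then forces $\tau_h\downarrow\tau^*$.
\item If instead $\tau^*\ge\tau_f^{k(y)}$, the paths $Y^{\alpha,y+h}$ and $Y^{\alpha,y}$ coincide from $\tau_f^{k(y)}$ on, so $\tau_h=\tau^*$.
\end{itemize}
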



To characterize the stopping boundary of the dual problem \eqref{eq:tildev}, we
need to understand how the incremental value of delaying retirement varies with
the dual wealth state. The following lemma provides the monotonicity of the continuation premium profit  $y\mapsto\tilde v(y,z)-\tilde G(y,z)$ for $y\in(0,\alpha]$. 
\begin{lemma}\label{lem:derivativev_G}
 Let Assumptions $(\boldsymbol{A}_{\boldsymbol{Z}})$ and  $(\boldsymbol{A}_{\boldsymbol{\rho}})$ hold. Then, we have $\tilde v_y(y,z)\geq \tilde G_y(y,z)$ for all $(y,z)\in (0,\alpha]\times\R_+$.
\end{lemma}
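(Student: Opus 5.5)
We compare the probabilistic representation of $\tilde v_y$ in Lemma~\ref{prop:tvy} with an analogous representation of $\tilde G_y$, evaluated along the same path up to the same stopping time. The difference then splits into a running-integrand comparison and a terminal-term comparison.

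\textbf{Step 1: representation of $\tilde G_y$.} Apply the argument of Lemma~\ref{prop:tvy} to the post-retirement problem \eqref{eq:tildeG}, with the stopping time $\tau^*(y,z)\wedge\tau_f^{-\ln\frac{y}{\alpha}}$. On $[0,\tau_f^{-\ln(y/\alpha)})$ we have $Y^{\alpha,y}_t=y\eta_t$, with no reflection at $\alpha$, and hence also no reflection at $\beta\ge\alpha$. By the strong Markov property, or equivalently Dynkin's formula applied to $\tilde G$, this gives
\begin{align*}
\tilde G_y(y,z)=\frac1y\Ex\Bigg[\int_0^{\tau^*\wedge\tau_f} e^{-\rho s}\left(\tilde U'(y\eta_s)-\mu_X Z_s^z\right)y\eta_s\,ds+e^{-\rho(\tau^*\wedge\tau_f)}\tilde G_y\left(y\eta_{\tau^*\wedge\tau_f},Z^z_{\tau^*\wedge\tau_f}\right)y\eta_{\tau^*\wedge\tau_f}\Bigg].
\end{align*}
Here we use that $\tilde G$ satisfies $(\mathcal L-\rho)\tilde G+\tilde U(y)-\mu_X zy=0$ on $(0,\beta)\times\R_+$, where $\mathcal L$ is the joint generator of $(Y,Z)$. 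We also use the scaling $y\partial_y$, which commutes with the multiplicative dynamics. This could alternatively be checked directly from the explicit formula \eqref{eq:explicitformtildeG}.

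\textbf{Step 2: subtract the two representations.} The difference $y(\tilde v_y-\tilde G_y)$ has two parts. The first is the running part
\[
\Ex\int_0^{\tau^*\wedge\tau_f} e^{-\rho s}\left(rZ_s+r_c\right)y\eta_s\,ds,
\]
which is $\ge0$ because $r,r_c\ge0$, using $(\boldsymbol A_Z)$. The second is the terminal part on $\{\tau_f\le\tau^*\}$, which is
\[
-\Ex\left[e^{-\rho\tau_f}\tilde G_y\left(\alpha,Z_{\tau_f}\right)\alpha\,\mathds 1_{\{\tau_f\le\tau^*\}}\right],
\]
since $y\eta_{\tau_f}=\alpha$ there. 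On $\{\tau^*<\tau_f\}$ the terminal terms cancel exactly.

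\textbf{Step 3: sign of the boundary term (main obstacle).} We need $\tilde G_y(\alpha,z')\le0$ for every $z'\ge0$. From \eqref{eq:explicitformtildeG},
\[
\tilde G_y(\alpha,z')=-\frac{(1-p)^2}{\rho(1-p)-\xi p}\left(\alpha^{-\frac1{1-p}}-\beta^{-\frac1{1-p}}\right)+z'\left(1-(\alpha/\beta)^{\kappa-1}\right).
\]
The first term is $\le0$ because $\alpha\le\beta$ and $\rho(1-p)>\xi p$ by $(\boldsymbol A_\rho)$. However, since $\kappa<1$, the coefficient of $z'$ is $1-(\beta/\alpha)^{1-\kappa}\le0$, so the second term is also $\le0$.

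Hence $\tilde G_y(\alpha,\cdot)\le0$, the boundary contribution is nonnegative, and $\tilde v_y\ge\tilde G_y$ follows.

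The delicate points are the following. First, we must justify the representation of $\tilde G_y$ with the same stopping time; this needs the integrability from Lemma~\ref{lem:growth} to pass to the limit and apply optional sampling. Second, we must handle $y=\alpha$, where $\tau_f^0=0$ a.s. In that case the claim reduces to the one-sided derivative identity $\tilde v_y(\alpha,z)=0$, which should be read off from \eqref{eq:tildevy} with $\tau_f=0$. This must then be compared with $\tilde G_y(\alpha,z)\le0$, which was shown above.
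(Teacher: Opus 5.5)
Your proposal is correct and follows the paper's proof. Both subtract an It\^o/Dynkin representation of $y\tilde G_y$ along $Y^0=y\eta$ up to $\tau^*\wedge\tau_f^{-\ln(y/\alpha)}$ from the representation of $y\tilde v_y$ in Lemma~\ref{prop:tvy}, and conclude from $\tilde G_y(\alpha,\cdot)\le 0$ at the reflection boundary. Your bookkeeping is in fact more accurate than the paper's: the paper asserts that $\tilde G_y$ solves the PDE with source $(r-\mu_X)z+r_c+\tilde U'(y)$, when its actual source is $-\mu_X z+\tilde U'(y)$, and it therefore drops the nonnegative running term $\Ex\int_0^{\tau^*\wedge\tau_f}e^{-\rho s}(rZ_s+r_c)y\eta_s\,ds$ that you correctly retain.
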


The next lemma characterizes the structure of the continuation and stopping regions defined in \eqref{eq:continuous-stopping_region}.
It shows that, for each benchmark asset value \(z\), the decision to retire is governed by a
one-dimensional boundary in the dual wealth state \(y\). This threshold representation reduces the stopping problem to the analysis of the free boundary
\(y_B(\cdot)\).
\begin{lemma}\label{lem:sectio}
Let Assumptions $(\boldsymbol{A}_{\boldsymbol{Z}})$ and  $(\boldsymbol{A}_{\boldsymbol{\rho}})$ hold. Then, we have
\begin{align}\label{eq:continuous-stopping_region2}
\mathcal{C}=\left\{(y,z) \in (0,\alpha]\times\R_+;~ y>y_B(z)\right\},~~
\mathcal{S}=\left\{(y,z) \in (0,\alpha]\times\R_+;~ y\leq y_B(z)\right\}.
\end{align}
Here, the stopping boundary $y_B(z)$ for $z\in\R_+$ is defined by
\begin{align}\label{eq:stopingboundary}
y_B(z):=\sup\{y\in (0,\alpha];~\tilde v(y,z)=\tilde G(y,z)\}.
\end{align}
\end{lemma}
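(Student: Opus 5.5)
The plan is to use Lemma~\ref{lem:derivativev_G}: on each horizontal section $\{z\}\times(0,\alpha]$, the continuation premium $y\mapsto \tilde v(y,z)-\tilde G(y,z)$ is nondecreasing. Together with $\tilde v\ge \tilde G$, this makes the stopping set an interval of the form $(0,y_B(z)]$ (possibly empty). Fix $z\in\R_+$ and put $h(y):=\tilde v(y,z)-\tilde G(y,z)$ for $y\in(0,\alpha]$. Immediate stopping gives $h\ge 0$ (the lower bound noted after \eqref{eq:tildeG}). Lemma~\ref{lem:continuous} gives continuity of $\tilde v$, and the explicit form in Lemma~\ref{lem:structtildeG} gives smoothness of $\tilde G$, so $h$ is continuous on $(0,\alpha]$.

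The key step is monotonicity. The derivative $\tilde v_y$ exists on $(0,\alpha]$ by Lemma~\ref{prop:tvy}; since $\tilde v(\cdot,z)$ is convex by Lemma~\ref{lem:struct}(ii), it is in any case absolutely continuous on compact subintervals. I would therefore write
\[
h(y_2)-h(y_1)=\int_{y_1}^{y_2}\bigl(\tilde v_y(u,z)-\tilde G_y(u,z)\bigr)\,du\ge 0
\]
for $0<y_1<y_2\le\alpha$, using Lemma~\ref{lem:derivativev_G}. So $h$ is nondecreasing on $(0,\alpha]$.

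Now define $y_B(z)$ by \eqref{eq:stopingboundary}, with the convention $\sup\emptyset:=0$; then $\mathcal S$ restricted to this section is empty, matching $\{y\le 0\}\cap(0,\alpha]=\emptyset$. Take any $y\le y_B(z)$ with $y>0$. By the definition of the supremum there is some $y'\in[y,\alpha]$, arbitrarily close to $y_B(z)$ and with $y'\ge y$, such that $h(y')=0$. Monotonicity then gives $0\le h(y)\le h(y')=0$, so $(y,z)\in\mathcal S$. When $y=y_B(z)>0$ itself, continuity of $h$ passes $h=0$ to the supremum, so the boundary point also lies in $\mathcal S$. Conversely, if $y>y_B(z)$ then $h(y)\neq 0$ by the definition of the supremum, so $h(y)>0$ and $(y,z)\in\mathcal C$. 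Because $\mathcal C$ and $\mathcal S$ partition $(0,\alpha]\times\R_+$, this gives \eqref{eq:continuous-stopping_region2}.

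I expect the only delicate point to be the passage from the derivative inequality to monotonicity of $h$, which needs enough regularity of $\tilde v(\cdot,z)$. If Lemma~\ref{prop:tvy} only provides one-sided derivatives, I would instead use that a convex function is absolutely continuous, with its right derivative equal a.e.\ to its derivative, and apply Lemma~\ref{lem:derivativev_G} to the right derivative. A secondary point is the case where the set in \eqref{eq:stopingboundary} is empty, which the convention $\sup\emptyset=0$ handles.
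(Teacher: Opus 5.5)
Your proposal is correct and follows essentially the same route as the paper: monotonicity of $y\mapsto\tilde v(y,z)-\tilde G(y,z)$ from Lemma~\ref{lem:derivativev_G}, combined with $\tilde v\ge\tilde G$, puts $\{y\le y_B(z)\}$ inside $\mathcal S$, and the definition of $y_B(z)$ as a supremum gives $\tilde v>\tilde G$ above it. Beyond the paper's argument, you also make explicit three points it leaves implicit: that the supremum is attained (by continuity), how the derivative inequality yields monotonicity, and the empty-set convention.
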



Let $O_{\alpha}:=(0,\alpha)\times \R_+$. Using the dynamic programming principle, we have the following variational inequality (VI) satisfied by the dual value function $\tilde{v}(y,z)\in W^{2,q}(O_{\alpha})$ with $q\geq 1$ (see, e.g., \citealt{friedman1982variational,peskir2006optimal}), for $(y,z)\in O_{\alpha}$,
 \begin{align}\label{eq:vihatv0}
\begin{cases}
 \displaystyle  \tilde {\cal L} \tilde {v}+(r-\mu_X) y z+r_cy+\tilde U(y)-\ell z-\ell_c=0,~~\text { if } \tilde{v}>\tilde{G};\\[0.5em]
\displaystyle  \tilde {\cal L} \tilde {v}+(r-\mu_X) y z+r_c y+\tilde U(y)-\ell z-\ell_c\leq 0,~~ \text { if } \tilde {v}=\tilde {G}
; \\[0.5em] 
\displaystyle \tilde {v}_y(\alpha,z)=0,\quad \forall z \in\R_+,
\end{cases}
\end{align}
where the operator $\tilde{\cal L}$ acted on $C^2(O_{\alpha})$ is defined by, for $\varphi\in C^2(O_{\alpha})$,
\begin{align}\label{eq:tildeL}
 \tilde {\cal L}\varphi:=&  {\mu^{\top}(\sigma \sigma^{\top})^{-1} \mu \over2} y^2\partial_{yy}\varphi
 +{\sigma^2_Z z^2|\gamma|^2\over2}\partial_{zz}\varphi
 -  \mu^{\top}(\sigma\sigma^{\top})^{-1}\sigma\sigma_Z z\gamma  y\partial_{yz}\varphi
 +\rho y\partial_y\varphi\notag\\
 &+\mu_Z z\partial_z\varphi-\rho \varphi.
\end{align}
By virtue of Lemma~\ref{prop:tvy}, the smooth-fit condition of the dual value function $\tilde v$ holds, i.e., $\tilde v_y(y_B(z),z)=\tilde G_y(y_B(z),z)$, because that when $y=y_B(z)$, we have $\tau^*(y_B(z),z)=0$ for $z\in \R_+$. 
Then, the VI \eqref{eq:vihatv0} can be rewritten as the following free-boundary problem, for $z\in \R_+$,
\begin{align}\label{eq:tildevGyBz}
\begin{cases}
\displaystyle \tilde {\cal L} \tilde {v}+(r-\mu_X) y z+r_c y+\tilde U(y)-\ell z-\ell_c=0,\\[0.6em]
\displaystyle \tilde v(y_B(z),z)=\tilde G(y_B(z),z)\quad \quad\text{(continuous\ pass)},\\[0.6em]
\tilde v_y(y_B(z),z)=\tilde G_y(y_B(z),z)\quad \text{(smooth-fit)},\\[0.6em] 
\displaystyle \tilde {v}_y(\alpha,z)=0\quad \text{(Neumann boundary)}. 
\end{cases}
\end{align}
Furthermore, Lemma~\ref{lem:derivativev_G} yields that $\tilde v_y$ satisfies the following PDE, on $O_{\alpha}$,
 \begin{align} \label{eq:vihatv}
\begin{cases}
 \displaystyle \tilde {\cal A} u+(r-\mu_X)z+r_c +\tilde U'(y)=0,\\[0.4em]
 \displaystyle u(y_B(z),z)=\tilde G_y(y_B(z),z),\quad \forall z\in\R_+, \\[0.4em]
 \displaystyle u(\alpha,z)=0,\quad \forall z\in\R_+
\end{cases}
\end{align}
with the operator $\tilde{\cal A}$ acted on $C^2(O_{\alpha})$ being given by, for $\phi\in C^2(O_{\alpha})$, 
\begin{align}\label{eq:tildeA}
 \tilde {\cal A}\phi &:=  {\mu^{\top}\left(\sigma \sigma^{\top}\right)^{-1} {\mu \over 2}} (2y \partial_y+y^2\partial_{yy})\phi
 +{\sigma^2_Z z^2|\gamma|^2\over 2}\partial_{zz}\phi
 -  \mu^{\top}(\sigma\sigma^{\top})^{-1}\sigma\sigma_Z z\gamma  (\partial_z+y\partial_{yz})\phi
 +\rho y\partial_y\phi\nonumber\\
 &\quad+\mu_Z z\partial_z\phi. 
\end{align}
 By applying the standard theory of optimal stopping (e.g. Theorem 2.7.7 in \citealt{karatzas1998methods} and Theorem 3.2 in \citealt{friedman1982variational}) together with Lemma~\ref{lem:struct}, Eq.~\eqref{eq:vihatv} has a unique solution $u\in C^{2}(\tilde{O}_{\alpha})$ with $\tilde{O}_{\alpha}:=\{(y,z)\in(0,\alpha);~ y_B(z)<y<\alpha,~z\in \R_+\}$ and $z\mapsto y_B(z)$ given by \eqref{eq:stopingboundary}. Therefore, the dual value function $\tilde v(\cdot)$  is $C^2(\mathcal{C})$ with $\tilde{v}_y(\cdot)\in C^2(\tilde{O}_{\alpha})$.

\section{Duality Theorem and Optimal Strategy}\label{sec:dualityTH}
We now develop the convex duality theorem for the hybrid stochastic control
problem \eqref{eq:u} and characterize the corresponding optimal stopping time and control strategy. We first present the main duality theorem, which establishes the fundamental dual relationship between the primal value function $v(x, z)$ and the dual function $\tilde{v}(y, z)$, and then provides feedback expressions for the optimal controls in terms of the reflected dual process. This result converts the original control-stopping problem
with state reflection into a dual optimal stopping problem and provides the bridge back to the primal formulation.
\begin{theorem}[Duality Theorem]\label{th:duality} Let Assumptions $\left(\boldsymbol{A}_{\boldsymbol{Z}}\right)$ and $\left(\boldsymbol{A}_{\boldsymbol{\rho}}\right)$ hold. Then, we have, for any $(x, z) \in$ $\mathbb{R}_{+}^2$,
\begin{align*}
    v(x, z)=\inf_{y \in(0, \alpha]}(\tilde{v}(y, z)+x y).
\end{align*}
Furthermore, introduce the following triplet of control strategy $(\tau^*,\theta_t^*, c_t^*)$ with $0\leq t <\tau^*$ by
\begin{align}\label{eq:optimalcontr}
\begin{cases}
\displaystyle \tau^*=\inf\{t\geq 0;~\tilde v (Y_t^{\alpha,*},Z_t)=\tilde G(Y_t^{\alpha,*},Z_t)\},\\[0.4em]
\displaystyle \theta_t^*=\left(\sigma \sigma^{\top}\right)^{-1}\left(\mu Y_t^{\alpha,*} \tilde{v}_{y y}\left(Y_t^{\alpha,*}, Z_t\right)+\sigma_Z Z_t  \sigma \gamma \tilde{v}_{y z}\left(Y_t^{\alpha,*}, Z_t\right)-\sigma_Z Z_t  \sigma \gamma\right), \\[0.4em]
\displaystyle c_t^*=I_U\left(Y_t^{\alpha,*}\right),
\end{cases}
\end{align}
and for $t\geq \tau^*$,
\begin{align}\label{eq:optimalcontr2}
    \begin{cases}
\displaystyle  \theta_t^*=\left(\sigma \sigma^{\top}\right)^{-1}\left(\mu Y_t^{\tau,\beta,*} \tilde{G}_{y y}\left(Y_t^{{\tau,\beta,*}}, Z_t\right)+\sigma_Z Z_t  \sigma \gamma \tilde{G}_{y z}\left(Y_t^{{\tau,\beta,*}}, Z_t\right)-\sigma_Z Z_t  \sigma \gamma\right), \\[0.4em]
\displaystyle c_t^*=I_U\left(Y_t^{{\tau,\beta,*}}\right).
\end{cases}
\end{align}
Here, $I_U(\cdot)$ is the inverse function of $U^{\prime}(\cdot)$, the RDP $Y^{{\alpha,*}}=(Y_t^{{\alpha,*}})_{t \geq 0}$ is given by \eqref{eq:Y_t} with $Y_0=y^*(x, z)$ which is the function determined by $-\tilde{v}_y\left(y^*(x, z), z\right)=x$, and the RDP  $Y^{{\tau,\beta,*}}=(Y_t^{{\tau,\beta,*}})_{t \geq 0}$ is given by \eqref{eq:Y_tbeta} with $Y^{\tau,\beta}_{\tau^*}=Y_{\tau^*}^{\alpha}$. Then, we have $(\tau^*,\theta^*, c^*)=(\tau^*,(\theta_t^*, c_t^*)_{t\geq0})\in{\mathbb{T}\times}\mathbb{U}^{\rm r}$ is an optimal control pair for problem~\eqref{eq:u}, that is, for any $(x,z)\in\R_+^2$,
\begin{align*}
v(x, z)=\sup _{(\tau,\theta, c) \in \mathbb T\times \mathbb{U}^{\rm{r}}} J(x, z ; \tau, \theta, c)=J\left(x, z ; \tau^*,\theta^*, c^*\right),  
\end{align*}
where the original objective functional $J(\cdot)$ is defined in \eqref{eq:u}. 
\end{theorem}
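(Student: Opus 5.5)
The proof splits into a weak-duality inequality and a verification step.

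\emph{Weak duality.} Fix $(\tau,\theta,c)\in\mathbb{T}\times\mathbb{U}^{\rm r}$ and $y\in(0,\alpha]$. Start from $J(x,z;\tau,\theta,c)$ and add the nonnegative quantity $xy$ minus the left-hand side of \eqref{eq:contrain} from Lemma~\ref{lem:XY}. Then regroup the terms pathwise.
\begin{itemize}
\item On $[0,\tau)$ the running terms become $U(c_t)-c_tY_t^\alpha$, which is at most $\tilde U(Y_t^\alpha)$, plus $(r-\mu_X)Z_tY^\alpha_t+r_cY^\alpha_t-\ell Z_t-\ell_c$.
\item The local-time terms become $-(\alpha-Y^\alpha_t)\,dL^X_t-X_t\,dL^\alpha_t$. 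Both are nonpositive, since $Y^\alpha\le\alpha$ and $X\ge0$. (Equality holds when $L^X$ charges only $\{Y^\alpha=\alpha\}$ and $L^\alpha$ charges only $\{X=0\}$.)
\item The same regrouping on $[\tau,\infty)$ with $\beta$ and $Y^{\tau,\beta}$ gives $\tilde U(Y^{\tau,\beta}_t)-\mu_XZ_tY^{\tau,\beta}_t$.
\end{itemize}
By the strong Markov property of $(Y^{\tau,\beta},Z)$ at $\tau$ and the definition \eqref{eq:tildeG}, the conditional expectation of the post-$\tau$ integral equals $e^{-\rho\tau}\tilde G(Y^\alpha_\tau,Z_\tau)$. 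Lemma~\ref{lem:growth} justifies the interchanges. This gives $J\le \tilde J(y,z;\tau)+xy\le \tilde v(y,z)+xy$. Hence
\[
v(x,z)\le\inf_{y\in(0,\alpha]}(\tilde v(y,z)+xy).
\]

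\emph{Attainment.} By Lemma~\ref{lem:struct}, $y\mapsto\tilde v(y,z)$ is strictly convex and strictly decreasing, and $\tilde v_y(\alpha,z)=0$. Since the $\tilde U$ term blows up, $-\tilde v_y(0+,z)=+\infty$. So for each $x\ge0$ there is a unique $y^*=y^*(x,z)$ with $-\tilde v_y(y^*,z)=x$, and this $y^*$ attains the infimum.

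Now define $(\tau^*,\theta^*,c^*)$ by \eqref{eq:optimalcontr}--\eqref{eq:optimalcontr2}. Set the candidate wealth to be $X^*_t=-\tilde v_y(Y^{\alpha,*}_t,Z_t)$ for $t<\tau^*$ and $X^*_t=-\tilde G_y(Y^{\tau,\beta,*}_t,Z_t)$ for $t\ge\tau^*$. These agree at $\tau^*$ by smooth fit in \eqref{eq:tildevGyBz}.

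Apply It\^o's formula to $-\tilde v_y(Y^{\alpha,*},Z)$ on the continuation region, using the PDE \eqref{eq:vihatv} with operator $\tilde{\cal A}$. Matching drift and diffusion with \eqref{state-X} shows three things:
\begin{itemize}
\item the diffusion coefficient yields exactly $\theta^*$;
\item the drift yields consumption $I_U(Y^{\alpha,*})=-\tilde U'(Y^{\alpha,*})$ together with the income and benchmark terms;
\item the reflection of $Y^{\alpha,*}$ at $\alpha$ contributes $-\tilde v_{yy}(\alpha,Z)\,dL^\alpha$. This is not a state term, because $\tilde v_y(\alpha,z)=0$ gives $X^*=0$ there. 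It is exactly the local time $L^{X^*}$ pushing $X^*$ off zero, with $dL^{X^*}$ charging only $\{Y^{\alpha,*}=\alpha\}$.
\end{itemize}
The explicit $\tilde G$ in Lemma~\ref{lem:structtildeG} gives the same identification after $\tau^*$, with reflection at $\beta$. Uniqueness of the Skorokhod reflection then identifies $X^*$ as the state process \eqref{state-X} driven by $(\tau^*,\theta^*,c^*)$.

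Consequently every inequality in the weak-duality step is an equality:
\begin{itemize}
\item $c^*$ attains the supremum defining $\tilde U$;
\item $(\alpha-Y^{\alpha,*})\,dL^{X^*}=0$ and $X^*\,dL^\alpha=0$, and likewise after $\tau^*$;
\item $\tau^*$ is optimal for $\tilde v$ by Lemma~\ref{lem:sectio} and the Shiryaev result quoted before it.
\end{itemize}
The remaining point is that \eqref{eq:contrain} holds with equality at $y^*$. Apply It\^o to $e^{-\rho t}X^*_tY_t$ and show the stochastic integrals are true martingales, with $\mathbb{E}[e^{-\rho T}X^*_TY_T]\to0$ as $T\to\infty$. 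Then $J(x,z;\tau^*,\theta^*,c^*)=\tilde v(y^*,z)+xy^*$, which proves both claims of the theorem.

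\emph{Main obstacle.} I expect the hardest step to be admissibility and the martingale/transversality property of the candidate strategy. This requires the integrability $\mathbb{E}\int_0^t|\theta^*_s|^2ds<\infty$, which in turn needs polynomial growth bounds on $y\tilde v_{yy}$ and $z\tilde v_{yz}$ near $y\to0$. Before $\tau^*$ only regularity from the VI is available. I would first try to bound $\tilde v$ and its derivatives between $\tilde G$ and the no-retirement dual value, both explicit, and use the probabilistic representation of $\tilde v_y$ in Lemma~\ref{prop:tvy}. This gives bounds of order $y^{-1/(1-p)}+z$. Combined with Assumption $(\boldsymbol{A}_{\boldsymbol{\rho}})$, which controls the moments of $Y$ and $Z$ (via $\rho>2\mu_Z+\sigma_Z^2$ and $\rho>\xi p/(1-p)$), this should yield the needed square-integrability and transversality.
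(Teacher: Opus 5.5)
Your proposal is correct in outline and uses the same ingredients as the paper.
\begin{itemize}
\item Weak duality: the budget constraint of Lemma~\ref{lem:XY}, the Fenchel inequality, the sign of the local-time terms, and the strong Markov property at $\tau$.
\item Attainment: $X^*=-\tilde v_y(Y^{\alpha,*},Z)$ before $\tau^*$ and $-\tilde G_y(Y^{\tau,\beta,*},Z)$ after, It\^o's formula matched against the differentiated free-boundary equation, and the reflection identified through the Skorokhod problem.
\end{itemize}

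The real difference is how attainment is closed. The paper replaces $\tau^*$ by $\tau^{*,n}=\tau^*\wedge n$. It inserts It\^o's formula for $e^{-\rho t}X^{*,n}_tY^{*,n}_t$ into $J(x,z;\tau^{*,n},\theta^{*,n},c^{*,n})$ and lets $n\to\infty$ with Fatou and dominated convergence, obtaining only $v(x,z)\geq\tilde v(y^*,z)+xy^*$. You instead show that the budget constraint holds with equality for the untruncated triplet. Then every inequality in the weak-duality chain is an equality, and $J(x,z;\tau^*,\theta^*,c^*)=v(x,z)$ follows directly, not just the duality identity.

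Your version is the sounder one. On $\{n<\tau^*\}$ the paper switches from $-\tilde v_y$ to $-\tilde G_y$ while $(Y^{\alpha,*}_n,Z_n)\in\mathcal{C}$. There, in general, $\tilde v_y\neq\tilde G_y$; for instance $\tilde v_y(\alpha,z)=0>\tilde G_y(\alpha,z)$ when $\alpha<\beta$, and by continuity they differ on a neighbourhood. So the truncated candidate wealth jumps with positive probability and cannot be the continuous state \eqref{state-X}. At $\tau^*$ itself smooth fit removes the jump, which is exactly what you use. Your route costs the martingale/transversality and admissibility checks you single out, but the paper relies on the same facts, without proof, when it takes expectations after inserting It\^o's formula.

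Three small corrections.
\begin{itemize}
\item Because $dY^{\alpha,*}$ contains $-dL^{\alpha}$, the reflection enters $d\bigl(-\tilde v_y(Y^{\alpha,*},Z)\bigr)$ as $+\tilde v_{yy}(\alpha,Z)\,dL^{\alpha}$, not with a minus sign. This sign, together with $\tilde v_{yy}>0$, is what makes it a nondecreasing local time for $X^*$.
\item $\tilde G$ and the no-retirement value $\tilde v^{\rm w}$ are both lower bounds for $\tilde v$ (they correspond to $\tau=0$ and $\tau=\infty$), so they do not sandwich it. The usable two-sided bound is $\tilde G_y\leq\tilde v_y\leq0$ (Lemmas~\ref{lem:struct} and~\ref{lem:derivativev_G}). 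It gives $0\leq X^*_t\leq-\tilde G_y$ along the concatenated dual process, which reduces transversality to moment estimates like those in Lemma~\ref{lem:growth}. For the martingale property, the representation in Lemma~\ref{prop:tvy} combined with the strong Markov property is more natural than growth bounds on $\tilde v_{yy}$ and $\tilde v_{yz}$, which the paper does not provide.
\item The diffusion matching actually gives $\sigma^{\top}\theta^*_t=Y^{\alpha,*}_t\tilde v_{yy}\,\sigma^{-1}\mu+\sigma_Z Z_t(1-\tilde v_{yz})\gamma$. This agrees with the feedback form in Corollary~\ref{coro:originalstrategy} but has the opposite sign on the $\gamma$-terms from the displayed \eqref{eq:optimalcontr}. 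That looks like a sign typo in the statement, not a problem with your argument.
\end{itemize}
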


Theorem~\ref{th:duality} gives the optimal strategy in terms of the dual function $\tilde v(y,z)$. The following corollary characterizes the optimal strategy in feedback form via the original value function $v(x,z)$.
\begin{corollary}\label{coro:originalstrategy}
Let Assumptions $\left(\boldsymbol{A}_{\boldsymbol{Z}}\right)$ and $\left(\boldsymbol{A}_{\boldsymbol{\rho}}\right)$ hold. Define the following optimal feedback control functions by, for any $(x, z) \in \mathbb{R}_{+}^2$, 
 \begin{align}\label{eq:x_Bz}
 \begin{cases}
\displaystyle \theta^*(x, z):=\begin{cases}
 \displaystyle   -\left(\sigma \sigma^{\top}\right)^{-1} \frac{\mu v_x(x, z)+\sigma_Z z \sigma \gamma\left(v_{x z}(x, z)-v_{x x}(x, z)\right)}{v_{x x}(x, z)},~\text{ if } 0<x<x_B(z),\\[0.6em]
 \displaystyle   -\left(\sigma \sigma^{\top}\right)^{-1} \frac{\mu G_x(x, z)+\sigma_Z z \sigma \gamma\left(G_{x z}(x, z)-G_{x x}(x, z)\right)}{G_{x x}(x, z)},~\text{ if } x\geq x_B(z);\\
\end{cases}\\[0.6em]
\displaystyle c^*(x, z):={v_x}^{\frac{1}{p-1}}(x, z);\\[0.6em]
\displaystyle \tau^*(x,z):=\inf \{t\geq 0;~X^{*,x}_t\geq x_B(Z_t^z)\},
\end{cases}
\end{align}
where $x_B(z):=-\tilde v_y(y_B(z),z)$ for $z\in \R_+$ and $   G(x,z):= \inf_{y\in (0,\beta]}(\tilde G(y,z)+xy)$.
Consider the controlled state process $(X^*,Z)=(X_t^*,Z_t)_{t \geq 0}$ with feedback controls $\theta^*=(\theta^*(X_t^*,Z_t))_{t \geq 0}$ and $c^*=(c^*(X_t^*,Z_t))_{t \geq 0}$. 
Then, the triplet $(\tau^*,(\theta_t^*, c_t^*)_{t\geq0}) \in \mathbb{T}\times\mathbb{U}^{\rm r}$ is an optimal investment-consumption strategy. That is, for any admissible control $(\tau,\theta,c) \in \mathbb{T}\times\mathbb{U}^{\rm r}$, we have
\begin{align}\label{eq:condBC}
&\Ex\left[ \int_0^{\tau } e^{-\rho t} (U(c_t )-\ell Z_t-\ell_c)dt-\alpha\int_0^{\tau} e^{-\rho t}dL_t^{X }+\int_{\tau }^\infty e^{-\rho t} U(c_t )dt- \beta \int_\tau^{\infty} e^{-\rho t}dL_t^{X }\right]\nonumber\\
&\qquad\leq v(x,z), \quad \forall (x,z) \in \mathbb{R}_+^2,    
\end{align}
where the above equality holds when one takes $(\tau,\theta,c)=(\tau^*,\theta^*,c^*)$.
\end{corollary}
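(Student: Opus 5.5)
The plan is to derive the corollary from Theorem~\ref{th:duality} by rewriting the dual feedback controls \eqref{eq:optimalcontr}--\eqref{eq:optimalcontr2} in the primal variables. The main tool is the Legendre relation $v(x,z)=\inf_{y\in(0,\alpha]}(\tilde v(y,z)+xy)$. Lemma~\ref{lem:struct}(ii) says $y\mapsto\tilde v(y,z)$ is strictly convex and strictly decreasing, and $\tilde v_y(\alpha,z)=0$. So for each $x>0$ the infimum is attained at the unique $y^*(x,z)$ solving $-\tilde v_y(y^*,z)=x$, and the envelope theorem gives
\begin{align*}
v_x(x,z)=y^*(x,z),\qquad v_{xx}(x,z)=-\frac{1}{\tilde v_{yy}(y^*,z)},\qquad v_z(x,z)=\tilde v_z(y^*,z).
\end{align*}
Differentiating $-\tilde v_y(y^*(x,z),z)=x$ in $z$ gives $\partial_z y^*=-\tilde v_{yz}/\tilde v_{yy}$. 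Hence $v_{xz}=\partial_z y^*=\tilde v_{yz}\,v_{xx}$, that is, $\tilde v_{yz}=v_{xz}/v_{xx}$ and $\tilde v_{yy}=-1/v_{xx}$. The regularity needed here ($\tilde v\in C^2(\mathcal C)$, $\tilde v_y\in C^2(\tilde O_\alpha)$) was established after \eqref{eq:tildeA}. The same computation applies to $G=\inf_{y\in(0,\beta]}(\tilde G+xy)$, and there everything is explicit from Lemma~\ref{lem:structtildeG}.

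Next I substitute these identities into \eqref{eq:optimalcontr}. Using $Y^{\alpha,*}_t=v_x(X^*_t,Z_t)$, which holds because along the optimum the dual process tracks the primal marginal utility ($Y_0=y^*(x,z)$, and this is propagated by Itô's formula applied to $-\tilde v_y(Y^{\alpha,*}_t,Z_t)$, which reproduces the $X^*$ dynamics including the reflection terms), we get
\begin{align*}
\mu Y\tilde v_{yy}+\sigma_Z Z\sigma\gamma(\tilde v_{yz}-1)=-\frac{\mu v_x+\sigma_Z z\sigma\gamma\,(v_{xx}-v_{xz})}{v_{xx}}.
\end{align*}
This is exactly the first branch of $\theta^*(x,z)$. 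The sign convention in the statement should be checked against it: the numerator there reads $v_{xz}-v_{xx}$, which I expect to be a sign typo of this combination. Consumption follows from $c^*=I_U(Y)=Y^{1/(p-1)}=v_x^{1/(p-1)}$. The post-retirement branch is obtained identically, with $(\tilde G,G,Y^{\tau,\beta,*})$ in place of $(\tilde v,v,Y^{\alpha,*})$.

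For the stopping rule, Lemma~\ref{lem:sectio} shows that stopping occurs iff $Y^{\alpha,*}_t\le y_B(Z_t)$. Since $y\mapsto-\tilde v_y(y,z)$ is strictly decreasing, this is equivalent to $X^*_t=-\tilde v_y(Y^{\alpha,*}_t,Z_t)\ge -\tilde v_y(y_B(Z_t),Z_t)=x_B(Z_t)$. This gives $\tau^*(x,z)$ in \eqref{eq:x_Bz}. At the switching point the smooth-fit condition $\tilde v_y(y_B,z)=\tilde G_y(y_B,z)$ guarantees that $x_B$ is also $-\tilde G_y(y_B,z)$, so the two regimes glue consistently.

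Finally, optimality and inequality \eqref{eq:condBC} follow directly from Theorem~\ref{th:duality}, since the feedback controls generate the same processes as the dual controls there. The main obstacle is the identification $Y^{\alpha,*}_t=v_x(X^*_t,Z_t)$, which requires that the closed-loop SDE for $X^*$ with reflection be well posed and coincide with the dual-generated process. I would handle this by defining $\hat X_t:=-\tilde v_y(Y^{\alpha,*}_t,Z_t)$ and applying Itô's formula using the PDE \eqref{eq:vihatv}. The Neumann condition $u(\alpha,z)=0$ converts $dL^\alpha$ into reflection of $\hat X$ at $0$, and matching the drift and diffusion coefficients shows that $\hat X$ solves the feedback SDE; uniqueness then yields $\hat X=X^*$.
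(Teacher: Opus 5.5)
Your strategy is the paper's own. Its proof lists the Legendre identities $\tilde v_y=-x^*$, $\tilde v_{yy}=-1/v_{xx}$, $\tilde v_{yz}=v_{xz}/v_{xx}$ (and their $G$-analogues for $y\le y_B(z)$), and then asserts that the dual-generated wealth has the same dynamics as the feedback-controlled one. Your translation of the stopping rule via the monotonicity of $-\tilde v_y(\cdot,z)$ and smooth fit makes that assertion explicit. So does your It\^o identification through the Neumann condition, which is the $\tilde X^{*,n}$ argument in the proof of Theorem~\ref{th:duality}.

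The genuine problem is how you resolve the sign discrepancy. Your algebra from \eqref{eq:optimalcontr} is correct, but the faulty formula is the one displayed in Theorem~\ref{th:duality}, not the corollary. Carry out the coefficient matching you propose at the end. The martingale part of $-\tilde v_y(Y^{\alpha,*}_t,Z_t)$ is
\begin{align*}
\bigl(Y^{\alpha,*}_t\tilde v_{yy}\,\mu^\top(\sigma^\top)^{-1}-\sigma_Z Z_t\tilde v_{yz}\,\gamma^\top\bigr)dW_t,
\end{align*}
while that of \eqref{state-X} is $(\theta_t^\top\sigma-\sigma_ZZ_t\gamma^\top)dW_t$. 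This forces
\begin{align*}
\theta_t=(\sigma\sigma^\top)^{-1}\bigl(\mu Y^{\alpha,*}_t\tilde v_{yy}+\sigma_ZZ_t\sigma\gamma(1-\tilde v_{yz})\bigr).
\end{align*}
With this $\theta$ the drifts also agree, using \eqref{eq:vihatv}, $c=-\tilde U'(Y)$ and $\mu_X=\mu_Z-\sigma_Z\gamma^\top\sigma^{-1}\mu$. The $\theta^*$ displayed in \eqref{eq:optimalcontr} instead gives a diffusion coefficient that is off by $2\sigma_ZZ_t(1-\tilde v_{yz})\gamma^\top$. This is generically nonzero; in the post-retirement branch, for instance, $1-\tilde G_{yz}=\beta^{1-\kappa}y^{\kappa-1}>0$.

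Substituting $Y=v_x$, $\tilde v_{yy}=-1/v_{xx}$, $\tilde v_{yz}=v_{xz}/v_{xx}$ into the correct $\theta$ gives exactly $-(\sigma\sigma^\top)^{-1}\frac{\mu v_x+\sigma_Zz\sigma\gamma(v_{xz}-v_{xx})}{v_{xx}}$, which is the corollary as stated. So if you ``correct'' the corollary to agree with \eqref{eq:optimalcontr}, your own final identification/uniqueness step fails whenever $\sigma_Zz\neq0$. The right move is to read $\theta$ off the coefficient matching. You should then note that \eqref{eq:optimalcontr}--\eqref{eq:optimalcontr2} should contain $-\sigma_ZZ_t\sigma\gamma\tilde v_{yz}+\sigma_ZZ_t\sigma\gamma$ (respectively with $\tilde G$). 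The value identity is unaffected, because the duality argument only uses that $-\tilde v_y(Y^{\alpha,*},Z)$ is the wealth of some admissible control.

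There is a further point that both you and the paper pass over. You use $(\tilde G,G,Y^{\tau,\beta,*})$ on all of $[\tau^*,\infty)$, but the rules in \eqref{eq:x_Bz} switch on $x\gtrless x_B(z)$ and use $c^*=v_x^{1/(p-1)}$ everywhere. After $\tau^*$, the wealth $-\tilde G_y(Y^{\tau,\beta,*}_t,Z_t)$ falls below $x_B(Z_t)$ whenever $Y^{\tau,\beta,*}_t>y_B(Z_t)$. This happens with positive probability, since post-retirement wealth is reflected at $0<x_B$. In that region $v_x$ and $G_x$ differ in general, so the literal state-feedback process is not the dual-optimal one. Your claim that ``the feedback controls generate the same processes as the dual controls'' therefore holds only if the feedback is read as regime-dependent: $v$-rules on $[0,\tau^*)$ and $G$-rules, including $c^*=G_x^{1/(p-1)}$, on $[\tau^*,\infty)$. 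On $\{x\ge x_B(z)\}$ the two sets of rules coincide because $v=G$ there. You should state this explicitly rather than rely on the literal state-feedback form.
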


Theorem~\ref{th:duality} and Corollary~\ref{coro:originalstrategy} establish the
connection between the primal control-stopping problem \eqref{eq:u} and the dual
optimal stopping problem \eqref{eq:tildev}. They show that the dual free
boundary determines the primal retirement threshold, while the derivatives of
the primal value function generate the optimal consumption and portfolio rules.
Then, using the duality relationship and Lemma~\ref{lem:struct}, we can easily obtain the structural properties of the original problem, and hence the proof of the following lemma is omitted.
\begin{lemma}[Structure properties of original value function $v$]
Let Assumptions $\left(\boldsymbol{A}_{\boldsymbol{Z}}\right)$ and $\left(\boldsymbol{A}_{\boldsymbol{\rho}}\right)$ hold. Then, the original value function $v(x,z)$ satisfies the following properties:
\begin{itemize}
\item [{\rm(i)}] for any $z \in \R_+$, the value function $x \mapsto {v}(y, z)$ is strictly increasing and strictly concave;
\item [{\rm(ii)}] the value function ${v}(x, z)$ is continuous in $(x,z)\in \R_+^2$;
\item[{\rm(iii)}] for any $x > 0$, the value function $z \mapsto {v}(x, z)$ is increasing.
\end{itemize}  
\end{lemma}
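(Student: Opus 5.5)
The plan is to derive all three properties from the duality relation of Theorem~\ref{th:duality},
\begin{align*}
v(x,z)=\inf_{y\in(0,\alpha]}\big(\tilde v(y,z)+xy\big),
\end{align*}
combined with Lemmas~\ref{lem:struct} and \ref{lem:continuous}. So $v(\cdot,z)$ is essentially the concave conjugate of $\tilde v(\cdot,z)$, restricted to $(0,\alpha]$.

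\emph{Part (i).} Fix $z$. Each map $x\mapsto \tilde v(y,z)+xy$ is affine with slope $y\in(0,\alpha]$, and $v(\cdot,z)$ is their infimum, so $v(\cdot,z)$ is concave. By Lemma~\ref{lem:struct}(ii), $\tilde v(\cdot,z)$ is strictly convex and strictly decreasing. Hence for each $x\ge 0$ the minimizer $y^*(x,z)$ is unique and is characterized by $-\tilde v_y(y^*,z)=x$, with the Neumann condition $\tilde v_y(\alpha,z)=0$ covering the endpoint $x=0$.
\begin{itemize}
\item Strict monotonicity: by the envelope argument, $v_x(x,z)=y^*(x,z)>0$, so $v(\cdot,z)$ is strictly increasing.
\item Strict concavity: $x\mapsto y^*(x,z)$ is the inverse of the strictly decreasing map $-\tilde v_y(\cdot,z)$, so it is strictly decreasing. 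Thus $v_x$ is strictly decreasing, which gives strict concavity.
\end{itemize}

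\emph{Part (ii).} By Lemma~\ref{lem:continuous}, $(y,z,x)\mapsto \tilde v(y,z)+xy$ is jointly continuous. To pass continuity through the infimum, I would localize the minimizer. For $(x,z)$ in a compact set, I would show that $y^*(x,z)$ stays in a compact subinterval $[\underline y,\alpha]\subset(0,\alpha]$. This follows from the blow-up of $\tilde v(y,z)\ge\tilde G(y,z)$ as $y\downarrow0$, which is visible in the explicit $y^{-p/(1-p)}$ term of Lemma~\ref{lem:structtildeG} when $p<0$; when $0<p<1$ one uses the linear term $xy$ instead. Once the minimizer is confined to a compact set, a standard Berge maximum-theorem argument gives joint continuity of $v$.

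\emph{Part (iii).} For fixed $y$, the map $z\mapsto \tilde v(y,z)+xy$ has the same $z$-monotonicity as $\tilde v(y,\cdot)$. An infimum over $y$ of functions all monotone in $z$ in the same direction is monotone in that direction. So (iii) reduces to a $z$-monotonicity of $\tilde v$ in the direction matching the stated claim, and the whole burden sits in this reduction.

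\emph{Main obstacle.} The difficulty is reconciling the sign with Lemma~\ref{lem:struct}(i), which is stated as decreasing. My first step would therefore be to recheck that sign. I would go back to the dual objective \eqref{eq:tildev}: the $z$-dependence enters through the running terms $(r-\mu_X)Z_tY^\alpha_t-\ell Z_t$, and through $\tilde G$ via the factor $z(y-\beta^{-(\kappa-1)}y^\kappa/\kappa)$. Since $Z^z_t=zZ^1_t$ is linear in $z$, I would differentiate $\tilde J(y,z;\tau)$ in $z$ for a fixed stopping time and track the sign carefully. I would use assumption $(\boldsymbol A_{\boldsymbol Z})$, namely $r\le\mu_X$, together with the sign of $y-\beta^{-(\kappa-1)}y^\kappa/\kappa$ on $(0,\beta]$, where $\kappa\in(0,1)$.

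As a cross-check, I would verify directly on the primal side via a coupling argument in \eqref{state-X}. For $z'>z$, I would write $Z^{z'}=Z^z+(z'-z)Z^1$ and absorb the extra benchmark exposure into the portfolio, using $\theta'=\theta+(z'-z)\sigma_Z(\sigma^\top)^{-1}\gamma Z^1$. The residual drift is $(r-\mu_X)(z'-z)Z^1_t$, and I would compare the resulting local times $L^X$ and the leisure penalty $\ell Z$. This coupling pins down the direction of monotonicity of $v$ in $z$ independently of the dual representation. I would then transfer it back through the infimum formula to conclude (iii) as stated.
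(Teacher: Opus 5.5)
Your treatment of (i) and (ii) follows the route the paper intends: it omits the proof, remarking that everything follows from Theorem~\ref{th:duality} and Lemma~\ref{lem:struct}. Part (iii), however, has a genuine gap. You never settle the sign, and your plan ends by ``concluding (iii) as stated'', which is impossible because the stated direction is false.

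The sign check you propose comes out unambiguously. In \eqref{eq:tildev} the benchmark enters in two places:
\begin{itemize}
\item the running term $\big((r-\mu_X)Y^\alpha_t-\ell\big)Z_t$, where $(r-\mu_X)Y^\alpha_t-\ell<0$ by $(\boldsymbol{A}_{\boldsymbol{Z}})$ and $\ell>0$;
\item the term $z\,g(y)$ in $\tilde G$, where $g(y)=y-\beta^{1-\kappa}y^{\kappa}/\kappa=y^{\kappa}\big(y^{1-\kappa}-\beta^{1-\kappa}/\kappa\big)<0$ on $(0,\beta]$ because $\kappa\in(0,1)$.
\end{itemize}
So Lemma~\ref{lem:struct}(i) is right. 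The infimum over $y$ of functions that are all decreasing in $z$ then gives $v(x,z_1)\ge v(x,z_2)$ for $z_1<z_2$.

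Your primal coupling confirms this rather than overturning it. Start from an arbitrary admissible $(\tau,\theta',c)$ for $z'>z$ and use $\theta=\theta'-(z'-z)\sigma_Z(\sigma^\top)^{-1}\gamma Z^1$ for $z$. The martingale parts cancel, and the unreflected state for $z$ exceeds the one for $z'$ by the nondecreasing process $(z'-z)\int_0^{\cdot}(\mu_X-r\mathds 1_{\{s<\tau\}})Z^1_s\,ds$. The Skorokhod map then makes $L^{X,z'}-L^{X,z}$ nondecreasing. You need this measure-level comparison, not just $L^{X,z}\le L^{X,z'}$, because the weights switch from $\alpha$ to $\beta$ at $\tau$. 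The leisure penalty $\ell Z$ is also smaller for $z$. Hence $J(x,z;\tau,\theta,c)\ge J(x,z';\tau,\theta',c)$, and so $v(x,z)\ge v(x,z')$.

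Thus the provable item (iii), and the one the paper's one-line argument actually yields, is that $z\mapsto v(x,z)$ is \emph{decreasing}. The word ``increasing'' in the statement is a sign slip, inconsistent with Lemma~\ref{lem:struct}(i) combined with Theorem~\ref{th:duality}. Prove the decreasing version and flag the discrepancy, rather than trying to reconcile toward the stated one.

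There is also a smaller error in your localization for (ii): the two cases are swapped.
\begin{itemize}
\item For $p<0$ the exponent $-p/(1-p)$ is positive and its coefficient is negative, so $\tilde G(y,z)\to0$ and $\tilde v(\cdot,z)$ stays bounded as $y\downarrow0$.
\item The blow-up to $+\infty$ happens for $0<p<1$.
\item The term $xy\to0$ cannot keep a minimizer away from $0$ in either case.
\end{itemize}
What does keep it away is the infinite slope coming from $\tilde U'(0+)=-\infty$. That is the same fact you need in (i) to know that $-\tilde v_y(\cdot,z)$ maps onto $[0,\infty)$, so that $y^*(x,z)$ exists for every $x\ge0$; strict convexity and the Neumann condition alone do not give this.

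Simpler still, you can skip localization entirely. The estimates in the proof of Lemma~\ref{lem:continuous} only use $Y^\alpha\le\alpha$ and $\sup_{(0,\alpha]}|g|<\infty$. Hence they give $\sup_{y\in(0,\alpha]}|\tilde v(y,z)-\tilde v(y,z')|\le C|z-z'|$, and therefore $|v(x,z)-v(x,z')|\le C|z-z'|$ uniformly in $x$. Combine this with continuity of the concave nondecreasing map $v(\cdot,z)$; at $x=0$, use upper semicontinuity of an infimum of affine functions. This gives joint continuity.
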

\section{Explicit Examples}\label{sec:exa}
This section presents a sensitivity analysis for the original optimal consumption and retirement time problem \eqref{eq:u}. From the definition of the value function $v$, it is straightforward to verify that higher income, lower leisure preference, and lower shortfall risk cost lead to a higher expected utility for the investor. To further study how these parameters affect the optimal retirement threshold, the portfolio-consumption strategy and the expected largest shortfall risk, we provide two explicit examples and analyze the investor's benefit and the expected largest shortfall risk from the retirement option.

\subsection{ Example with Zero Benchmark} 
The first example provides a special case with the benchmark process  $Z_t\equiv 0$ for all $t\geq0$.  In this case, labor income and leisure preference reduce to the constant rate $(r_t,\ell_t)=(r_c,\ell_c)$, and the shortfall process becomes
$ A_t^{(\theta,c,\tau)}
    =0\vee \sup_{s\leq t}\left(-V_s^{\theta,c,\tau}\right),
$
which records the running maximum deficit of the investor's wealth below zero. Hence, the shortfall penalty captures the investor's aversion to borrowing, insolvency, or persistent negative wealth. 
This benchmark-free case can be viewed as an extension of the classical Merton consumption-investment problem \citep{merton1971optimum}, in which the investor also chooses a retirement time and faces a penalty for the running maximum negative wealth. It is also related to discretionary-stopping formulations of utility maximization, such as \citet{karatzas2000utility}, but differs by incorporating shortfall-risk penalization into the joint portfolio, consumption, and retirement-timing problem.
This explicit case serves as a natural baseline for evaluating the additional benchmark-tracking effects introduced when $Z$ is stochastic in Section~\ref{sec:ex2}.

\begin{lemma}\label{lem:ex1}
When the benchmark process $Z=(Z_t)_{t\geq0}\equiv 0$ (i.e., $\mu_Z=\sigma_Z=0$),   the solution of VI~\eqref{eq:tildev} has the following closed-form given by
\begin{align*}
\tilde v(y,0)=\tilde G(y,0)+P(y),\quad \forall y\in(0,\alpha],
\end{align*}
where the function $P(y)$ for $y\in (0,\alpha]$ and the stopping boundary have the following explicit representation: 
\begin{itemize}
\item [{\rm(i)}] if $\alpha \leq \frac{\ell_c}{r_c}$, then $P(y)\equiv 0$,  $y_B(0)= \alpha$ and $x_B(0)=0$;
\item [{\rm(ii)}] if $\alpha> \frac{\ell_c}{r_c}$, then we have, for any $y\in(0,\alpha]$,
\begin{align*}
P(y)=\left(C_1 y+C_2{y}^{-\frac{\rho}{\xi}}-\frac{r_c}{\rho+\xi}y\ln y-\frac{\ell_c}{\rho}\right)\mathds 1_{\{y>y_B\}},
\end{align*}
where the constants $C_1=\frac{r_c(1+\ln y_B)}{\rho+\xi}+\frac{\ell_c}{\rho+\xi}y_B^{-1}-\frac{\rho r_c}{(\rho+\xi)^2}$ and $C_2=\frac{\ell_c\xi/\rho-r_c\xi/(\rho+\xi)y_B}{\rho+\xi}y_B^{\frac{\rho}{\xi}}$ with $y_B{\in (0,\frac{\ell_c}{r_c})}$ being the unique solution of the following equation:
\begin{equation*}
	\left(\frac{\ell_c/r_c }{y_B}-\frac{\rho }{\rho+\xi}\right)\left[1- \left(\frac{y_B}{\alpha}\right)^{\frac{\rho}{\xi}+1}\right]+\ln\left(\frac{y_B}{\alpha}\right) = -\tilde G_y(\alpha,0)\frac{\rho+\xi}{r_c}.
\end{equation*}
Furthermore, the optimal retirement boundary for the original problem is given by
\begin{align*}
x_B(0)=-\tilde G_y(y_B,0)=\frac{(1-p)^2\left(y_B^{-\frac{1}{1-p}}-\beta^{-\frac{1}{1-p}}\right)}{\rho (1-p)-\xi p},
\end{align*}
and the optimal portfolio-consumption strategy satisfies
\begin{align*}
    \theta^*(x,0)=-(\sigma \sigma^\top)^{-1}\left(
    \frac{\mu (p-1)}{\rho (p-1)+\xi p}(v_x)^{\frac{1}{p-1}}+\mu\left(
    \frac{\ell_c}{\xi}-\frac{r_c \rho}{\xi(\rho+\xi)}y_B
    \right)y_B^{\frac{\rho}{\xi}} (v_x)^{-\frac{\rho+\xi}{\xi}}-\frac{\mu r_c}{\rho+\xi}
    \right).
\end{align*}
\end{itemize} 

\end{lemma}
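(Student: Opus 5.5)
With $Z\equiv0$ the dual stopping problem \eqref{eq:tildev} reduces to a one-dimensional problem in $y$ alone. We therefore work with the difference $P(y):=\tilde v(y,0)-\tilde G(y,0)\ge 0$. On $(0,\alpha)$ the function $\tilde G(\cdot,0)$ satisfies $\xi y^2\tilde G''+\rho y\tilde G'-\rho\tilde G+\tilde U(y)=0$ (the post-retirement equation, obtained from Lemma~\ref{lem:structtildeG}). Subtracting this from \eqref{eq:vihatv0} with $z=0$ gives the obstacle problem
\begin{align*}
\max\left\{\xi y^2P''+\rho yP'-\rho P+r_cy-\ell_c,\;-P\right\}=0,\qquad P'(\alpha)=-\tilde G_y(\alpha,0)>0 .
\end{align*}
Here the Neumann condition comes from $\tilde v_y(\alpha,0)=0$. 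Note that $\tilde G_y(\alpha,0)<0$ because $\alpha\le\beta$: indeed $\tilde G_y(\alpha,0)=\frac{(1-p)^2}{\rho(1-p)-\xi p}(\beta^{-1/(1-p)}-\alpha^{-1/(1-p)})\le 0$. The running reward $r_cy-\ell_c$ is negative exactly for $y<\ell_c/r_c$. By Lemma~\ref{lem:sectio} the stopping set is $\{y\le y_B\}$. We then split into the two cases of the statement.

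\emph{Case (i), $\alpha\le \ell_c/r_c$.} Here $r_cY_t-\ell_c\le 0$ along the whole path of the reflected process $Y^\alpha\in(0,\alpha]$. For any stopping time $\tau$, the gain from continuing is $\tilde J(y,0;\tau)-\tilde G(y,0)$. By the strong Markov property and the definitions \eqref{eq:tildev}–\eqref{eq:tildeG}, this gain equals $\mathbb E[\int_0^\tau e^{-\rho t}(r_cY^\alpha_t-\ell_c)dt]$ plus a term comparing the $\alpha$-reflected and $\beta$-reflected post-$\tau$ rewards.

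To control the second term, apply It\^o to $e^{-\rho t}\tilde G(Y^\alpha_t,0)$ up to $\tau$. This gives
\begin{align*}
\tilde J-\tilde G=\mathbb E\Big[\int_0^\tau e^{-\rho t}(r_cY_t^\alpha-\ell_c)dt-\int_0^\tau e^{-\rho t}\tilde G_y(\alpha,0)\,dL^\alpha_t\Big].
\end{align*}
The second term is $\ge0$, so the integrand sign alone is not enough. The cleaner route is to verify directly that $P\equiv0$ solves the VI. With $P\equiv0$ the inequality $r_cy-\ell_c\le0$ holds on $(0,\alpha]$. The Neumann condition is then the degenerate part: it can only hold if $\tilde G_y(\alpha,0)=0$, which happens when $\alpha=\beta$; otherwise one must use the VI in the form of \eqref{eq:vihatv0} with the obstacle binding at $\alpha$. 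I would check this carefully, since the boundary $y_B(0)=\alpha$ means the Neumann condition is never activated: $\tau^*=0$ and $L^\alpha$ never increases before stopping. Given $y_B=\alpha$, we get $x_B(0)=-\tilde v_y(\alpha,0)=0$.

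\emph{Case (ii), $\alpha>\ell_c/r_c$.} On the continuation region $(y_B,\alpha)$ the ODE $\xi y^2P''+\rho yP'-\rho P=\ell_c-r_cy$ is an Euler equation. Its characteristic roots are $1$ and $-\rho/\xi$, so the homogeneous solutions are $y$ and $y^{-\rho/\xi}$. Because $y$ is a resonant root, the particular solution is $-\frac{r_c}{\rho+\xi}y\ln y-\frac{\ell_c}{\rho}$. This gives the stated form of $P$ with three unknowns $C_1,C_2,y_B$.

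These unknowns are fixed by three conditions:
\begin{itemize}
\item continuous pass, $P(y_B)=0$;
\item smooth fit, $P'(y_B)=0$ (Lemma~\ref{prop:tvy});
\item Neumann, $P'(\alpha)=-\tilde G_y(\alpha,0)$.
\end{itemize}
Solving the first two linearly for $C_1,C_2$ in terms of $y_B$ yields the displayed constants. Substituting into the third gives the transcendental equation for $y_B$.

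\emph{Main obstacle.} The hard step is proving that this equation has a unique root in $(0,\ell_c/r_c)$. I would define $F(y_B)$ as the left-hand side and show it is monotone in $y_B$. As $y_B\downarrow0$ the term $\frac{\ell_c/r_c}{y_B}$ blows up, so $F\to+\infty$. At $y_B=\ell_c/r_c$ the value should fall below $-\tilde G_y(\alpha,0)(\rho+\xi)/r_c$, and I would try to show this using $\ln(y_B/\alpha)<0$. The derivative of $F$ can be computed directly, and I expect its sign to follow from $y_B<\alpha$.

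Next I verify the VI on the stopping region: $r_cy-\ell_c\le0$ there, which is exactly why $y_B<\ell_c/r_c$ is needed. I also verify $P>0$ on $(y_B,\alpha]$. A standard verification argument (It\^o plus Lemma~\ref{lem:growth}) then identifies $\tilde G+P$ with $\tilde v$.

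\emph{Primal quantities.} Finally, $x_B(0)=-\tilde v_y(y_B,0)=-\tilde G_y(y_B,0)$ by smooth fit, which gives the explicit formula. For $\theta^*$, use Corollary~\ref{coro:originalstrategy} with $z=0$, together with $\theta^*=(\sigma\sigma^\top)^{-1}\mu\,y\tilde v_{yy}(y)$ at $y=v_x$. Computing $y\tilde v_{yy}$ from $\tilde G+P$ gives the stated expression after substituting $C_2$.
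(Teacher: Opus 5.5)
Your Case (ii) is essentially the paper's argument. It uses the Euler equation with resonant root $1$, the three conditions $P(y_B)=P'(y_B)=0$ and $P'(\alpha)=-\tilde G_y(\alpha,0)$, and a strictly monotone function of $y_B$ that blows up at $0$.

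The genuine gap is Case (i). Your own It\^o identity shows it cannot be closed when $\alpha<\beta$. In that case $\tilde G_y(\alpha,0)<0$. Start from $y=\alpha$ and take the deterministic stopping time $\tau=\epsilon$; your identity gives
\begin{align*}
\tilde J(\alpha,0;\epsilon)-\tilde G(\alpha,0)\ \ge\ -\ell_c\,\epsilon+|\tilde G_y(\alpha,0)|\,e^{-\rho\epsilon}\,\mathbb{E}\big[L^\alpha_\epsilon\big].
\end{align*}
By the Skorokhod representation in the proof of Lemma~\ref{lem:struct}, $L^\alpha_\epsilon=\alpha f_\epsilon$ when $Y^\alpha_0=\alpha$, and $\mathbb{E}[f_\epsilon]\ge\sqrt{4\xi\epsilon/\pi}-|\rho-\xi|\epsilon$. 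The $\sqrt{\epsilon}$ gain from reflecting at the cheaper cost $\alpha$ beats the $O(\epsilon)$ running loss. Hence $\tilde v(\alpha,0)>\tilde G(\alpha,0)$: the point $y=\alpha$ lies in $\mathcal C$ and $y_B(0)<\alpha$, whatever the sign of $r_c\alpha-\ell_c$.

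Your way out (``the Neumann condition is never activated since $y_B(0)=\alpha$'') assumes the conclusion, and it is self-contradictory. If $P\equiv0$, then $\tilde v_y(\alpha,0)=\tilde G_y(\alpha,0)\neq0$, so you cannot then invoke $\tilde v_y(\alpha,0)=0$ to get $x_B(0)=0$. The paper's proof does not close this gap either; it simply declares $P\equiv0$ obvious in Case 1.

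Here is what can be proved. If $\alpha=\beta$, the local-time term in your identity vanishes. For $\alpha\le\ell_c/r_c$ this gives $\tilde J(y,0;\tau)-\tilde G(y,0)=\mathbb{E}[\int_0^\tau e^{-\rho t}(r_cY^\alpha_t-\ell_c)\,dt]\le0$ for every $\tau$. Hence $P\equiv0$, $y_B(0)=\alpha$, and $x_B(0)=-\tilde G_y(\alpha,0)=0$, now consistent with the Neumann condition.

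If instead $\alpha<\beta$ and $\alpha\le\ell_c/r_c$, your Case (ii) construction is the right one:
\begin{itemize}
\item the left-hand side of the $y_B$-equation has derivative with the sign of $r_cy_B-\ell_c$, so it is strictly decreasing on $(0,\alpha)$;
\item it tends to $+\infty$ as $y_B\downarrow0$, and at $y_B=\alpha$ it equals $0<-\tilde G_y(\alpha,0)(\rho+\xi)/r_c$, so there is a unique $y_B\in(0,\alpha)$;
\item the stopping-region inequality $r_cy-\ell_c\le0$ holds because $y_B<\alpha\le\ell_c/r_c$;
\item $P>0$ on $(y_B,\alpha]$ follows from the same comparison argument for $P'$ as in Case (ii).
\end{itemize}

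Three smaller corrections:
\begin{itemize}
\item In Case (ii), the monotonicity of the left-hand side comes from $y_B<\ell_c/r_c$, not from $y_B<\alpha$.
\item At $y_B=\ell_c/r_c$, the first term on the left is $\frac{\xi}{\rho+\xi}(1-x^{\rho/\xi+1})>0$ with $x=\ell_c/(r_c\alpha)<1$. So $\ln x<0$ alone is not enough. You need, as the paper does, that $x\mapsto\frac{\xi}{\rho+\xi}(1-x^{\rho/\xi+1})+\ln x$ is increasing on $(0,1)$ and vanishes at $x=1$.
\item Computing $(\sigma\sigma^\top)^{-1}\mu\,y\tilde v_{yy}$ from $\tilde G+P$ reproduces the displayed bracket with the opposite overall sign. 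It is also valid only for $x<x_B(0)$; beyond that only the $\tilde G$ term survives. So your last step does not literally yield the stated $\theta^*$.
\end{itemize}
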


To characterize the individual's benefit from the retirement option, we also define a portfolio-consumption problem without retirement, that is, for $(x,z)\in\mathbb{R}_+$,
\begin{align}\label{eq:uworking}
v^{\rm w}(x,z)
:= \sup_{(\theta,c)\in{\mathbb{U}^r}}\Ex\left[ \int_0^\infty e^{-\rho t} (U(c_t)-\ell Z_t-\ell_c )dt- \alpha \int_0^\infty e^{-\rho t}dL_t^X\right].
\end{align}
Then, using the static budget constraint similar to Lemma~\ref{lem:XY}, it is not difficult for us to derive that $v^{\rm w}(x,z)=\inf_{y\in(0,\alpha]}(\tilde v^{\rm w}(y,z)+xy)$, where the duality function $\tilde v^{\rm w}(y,z)$ is given by
\begin{align*}
\tilde v^{\rm w}(y,z)=\Ex\left[\int_0^\infty e^{-\rho t}\left(\tilde U(Y_t^\alpha)+(r-\mu_X)Z_tY_t^\alpha-\ell Z_t+r_c Y_t^\alpha -\ell_c\right) dt\right].
\end{align*}
Using It\^o's formula, we have the following explicit solution for the individual without considering the retirement option:
\begin{align*}
\tilde v^{\rm w} (y,z)&=\left(
\frac{r_c(\ln\alpha+1)}{\rho+\xi}
+\frac{1-p}{\rho+\frac{\xi p}{p-1}}\alpha^{\frac{1}{p-1}}
\right)y
-\frac{\ell_c}{\rho}
-\frac{r_c}{\rho+\xi}y\ln y
+\frac{(1-p)^2}{p\left(\rho+\frac{\xi p}{p-1}\right)}y^{\frac{p}{p-1}}\\
&\quad +z\left((1-\frac{r}{\mu_X})y+\frac{r-\mu_X}{\kappa \mu_X}\alpha^{1-\kappa}y^\kappa+\frac{\ell}{\mu_Z-\rho}\right),
\end{align*}
and 
\begin{align*}
\theta^{\rm w}(x,z)&=-(\sigma\sigma^\top)^{-1}\Bigg[-
\frac{\mu r_c}{\rho+\xi}+\frac{\mu (p-1)}{\rho (p-1)+\xi p}(v^{\rm w}_x)^{\frac{1}{p-1}}-\frac{r}{\mu_X}+\mu z \left(\frac{r}{\mu_X}-1\right)(\kappa-1)(v^{\rm w}_x /\alpha)^{\kappa-1}\\
&\qquad+\sigma \sigma_Z \gamma z\left(\frac{r-\mu_X}{\mu_X}(v^{\rm w}_x/\alpha)^{\kappa -1}\right)\Bigg],\\
c^{\rm w}(y,z)&=(v^{\rm w}_x)^{\frac{1}{p-1}}.
\end{align*}

\begin{figure}[!htbp]
    \centering
    \begin{subfigure}[b]{0.48\textwidth}
        \centering
        \includegraphics[width=\textwidth]{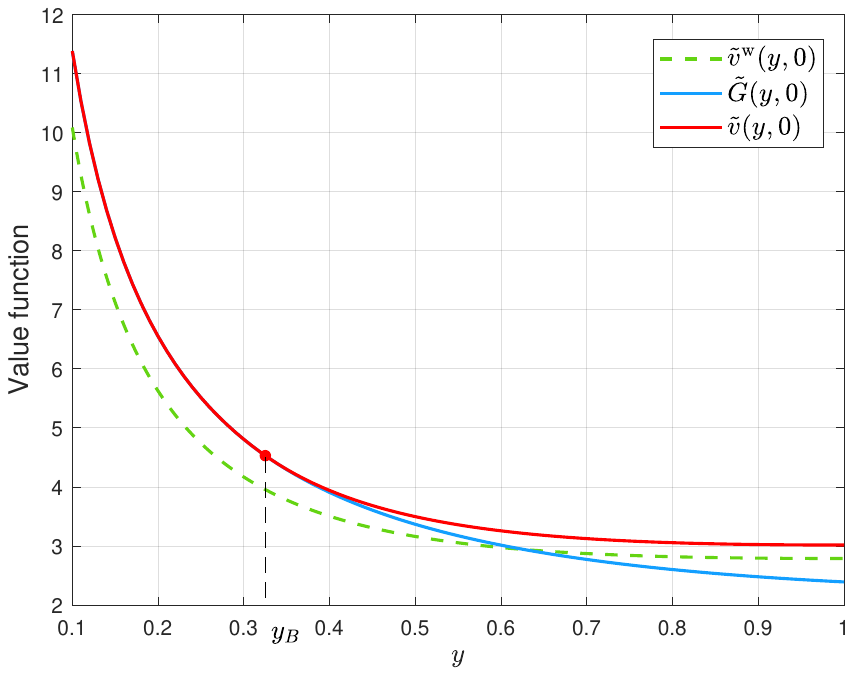} 
    \end{subfigure}
    \hfill  
    \begin{subfigure}[b]{0.48\textwidth}
        \centering
    \includegraphics[width=\textwidth]{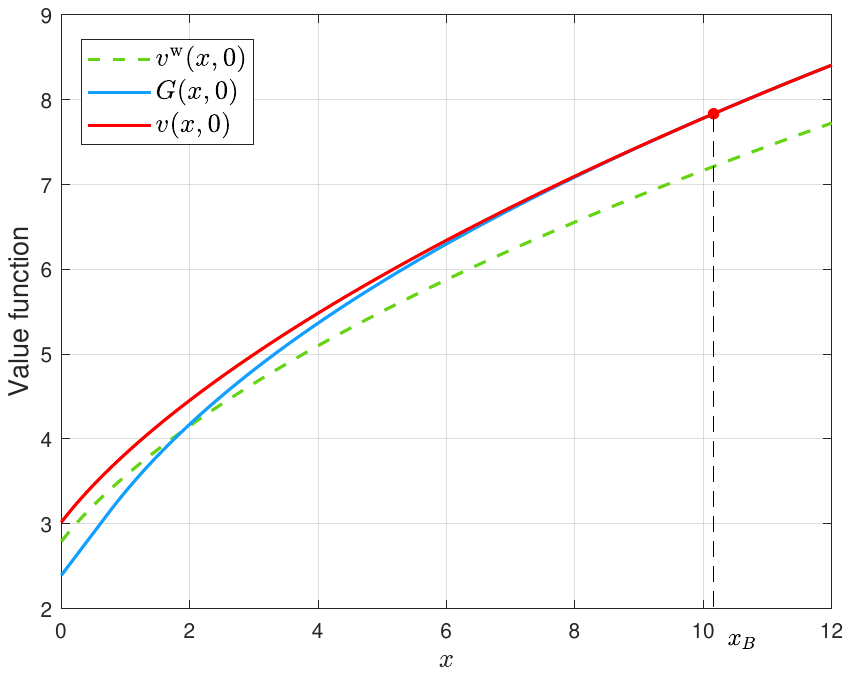}  
    \end{subfigure} 

    \caption{Comparison of value functions $\tilde{v}(y,z)$, $\tilde{G}(y,z)$, $\tilde v^{\rm w}$, ${v}(y,z)$, ${G}(y,z)$, and $v^{\rm w}(x,z)$. The baseline parameters are set as $z=0, \rho = 0.8$, $\alpha =1$, $\beta = 1.5$, $p = 0.45$, $\sigma = 0.1$, $\gamma = 1$, $\mu = 0.1$, $r_c=2$ and $\ell_c=1.5$.}
    \label{fig:ex1_values}
\end{figure}

Figure~\ref{fig:ex1_values}  compares value functions across the original problem~\eqref{eq:u}, the post-retirement portfolio problem~\eqref{eq:tildeG}, and the no-retirement portfolio-consumption problem~\eqref{eq:uworking}, in both the dual space (left panel) and the primal space (right panel).  It is observed that the dual value function $\tilde{v}$ dominates both $\tilde{G}$ and $\tilde{v}^{\rm w}$ over the entire state space and the primal value function $v$ dominates both $G$ and $v^{\rm w}$, which is consistent with the convex duality theory developed in Section 4. This directly quantifies the value of the early retirement option: investors may cease working and retire when their assets grow to a sufficient level to achieve greater expected utility. The gap between $v$ and $v^{\rm w}$ widens as wealth increases, which reflects that the early retirement option becomes more valuable at higher wealth levels; a pattern also observed in early retirement studies such as \cite{jin2006disutility} and \cite{dybvig2010lifetime}.
Conversely, the difference between $v$ and $G$ narrows as wealth approaches the optimal retirement boundary $x_B$. This gap represents the value of the option to continue working and delay retirement, which is strictly positive for all $x<x_B$, indicating that it is optimal to remain in the labor state when wealth is below the threshold. As wealth accumulates towards $x_B$, the marginal benefit of continuing to work diminishes, and the value of the continuation option vanishes at the boundary, where the value-matching and smooth-pasting conditions are satisfied (see also \citealt{karatzas1998methods,peskir2006optimal}). At this point, the investor is indifferent between retiring immediately and continuing to work, thereby triggering retirement optimally.

\begin{figure}[!h]
    \centering
    \includegraphics[width=0.95\linewidth]{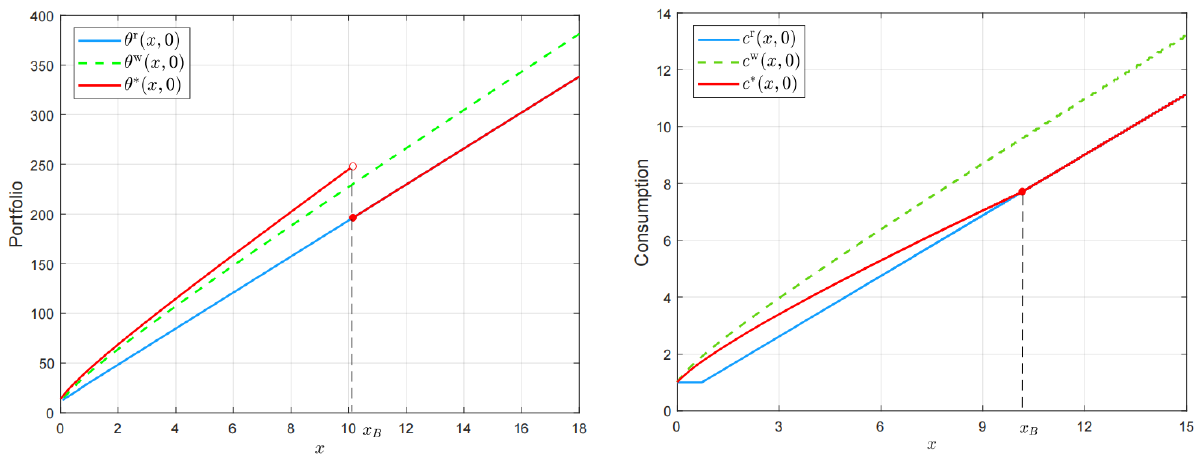}
    \caption{Comparison of portfolio-consumption strategies $\theta^*(x,z)$, $\theta^{\rm r}(x,z)$, $\theta^{\rm w}(x,z)$, $c^*(x,z)$, $c^{\rm r}(x,z)$, and $c^{\rm w}(x,z)$. The baseline parameters are set as $z=0, \rho = 0.8$, $\alpha =1$, $\beta = 1.5$, $p = 0.45$, $\sigma = 0.1$, $\gamma = 1$, $\mu = 0.1$, $r_c=2$ and $\ell_c=1.5$. }
    \label{fig:ex1_theta_c}
\end{figure}
Figure~\ref{fig:ex1_theta_c} compares the optimal portfolio and consumption strategies across the three scenarios in Figure~\ref{fig:ex1_values}.\footnote{The function $(\theta^{\rm r}(x,z),c^{\rm r}(x,z))$ is the portfolio-consumption strategy for post-retirement problem given in Corollary~\ref{coro:originalstrategy}, that is, for $(x,z)\in\R^2_+$,
$$\theta^{\rm r}(x,z) := -(\sigma\sigma^\top)^{-1} \frac{\mu G_x(x,z) + \sigma_Z z \sigma \gamma \big(G_{xz}(x,z) - G_{xx}(x,z)\big)}{G_{xx}(x,z)}, \quad
c^{\rm r}(x,z) := G_x^{\frac{1}{p-1}}(x,z).$$
}
Turning first to the portfolio strategy in the left panel, the behavior of optimal risk-taking differs fundamentally from the monotonic, wealth-increasing pattern in the frictionless benchmark in \cite{merton1969lifetime,merton1971optimum}. Instead, the optimal portfolio policy exhibits a clear two-stage structure across the retirement boundary. 
For $x<x_B$, the optimal pre-retirement portfolio $\theta^*$ lies higher than  the post-retirement portfolio $\theta^{\rm r}$ and the no-retirement benchmark $\theta^{\rm w}$.  More importantly, as wealth crosses the optimal retirement threshold \(x_B\), the optimal risky share drops discontinuously and remains significantly lower thereafter, a pronounced decline that is absent in the standard Merton framework. 
This ordering arises from two competing effects. First,  retirement is a risk-taking incentive. The presence of the early retirement option introduces a form of upside potential that encourages risk-taking. By increasing their exposure to risky assets, the investor accelerates wealth accumulation, thereby bringing the optimal retirement boundary $x_B$ closer in expectation. The option to exit the labor state once the wealth target is met effectively mitigates the perceived downside risk of aggressive investment.  Second, pre-retirement labor income is a risk buffer. Unlike the retiree, who must fund all future consumption from accumulated wealth, the working investor has future labor income flows that act as a ``human capital buffer". 
This buffer effect is consistent with \citet{BodieMertonSamuelson1992}, who show that labor-supply flexibility and future wage income can induce investors to take on more financial risk before retirement. 
The right panel illustrates that investors' consumption strategy increases with wealth. The no-retirement strategy \(c^{\rm w}\) yields the highest consumption level, as investors face only the lower pre-retirement shortfall costs. The post-retirement policy \(c^{\rm r}\) is the most restrained, driven by higher post-retirement shortfall costs and the absence of future labor income. Our optimal consumption \(c^*\) lies between the two, with a clear two-stage pattern. Before the retirement threshold, investors benefit from ongoing labor income and face lower shortfall costs, enabling them to consume more than if they retired immediately. 

\begin{figure}[!h]
    \centering
    \includegraphics[width=1\linewidth]{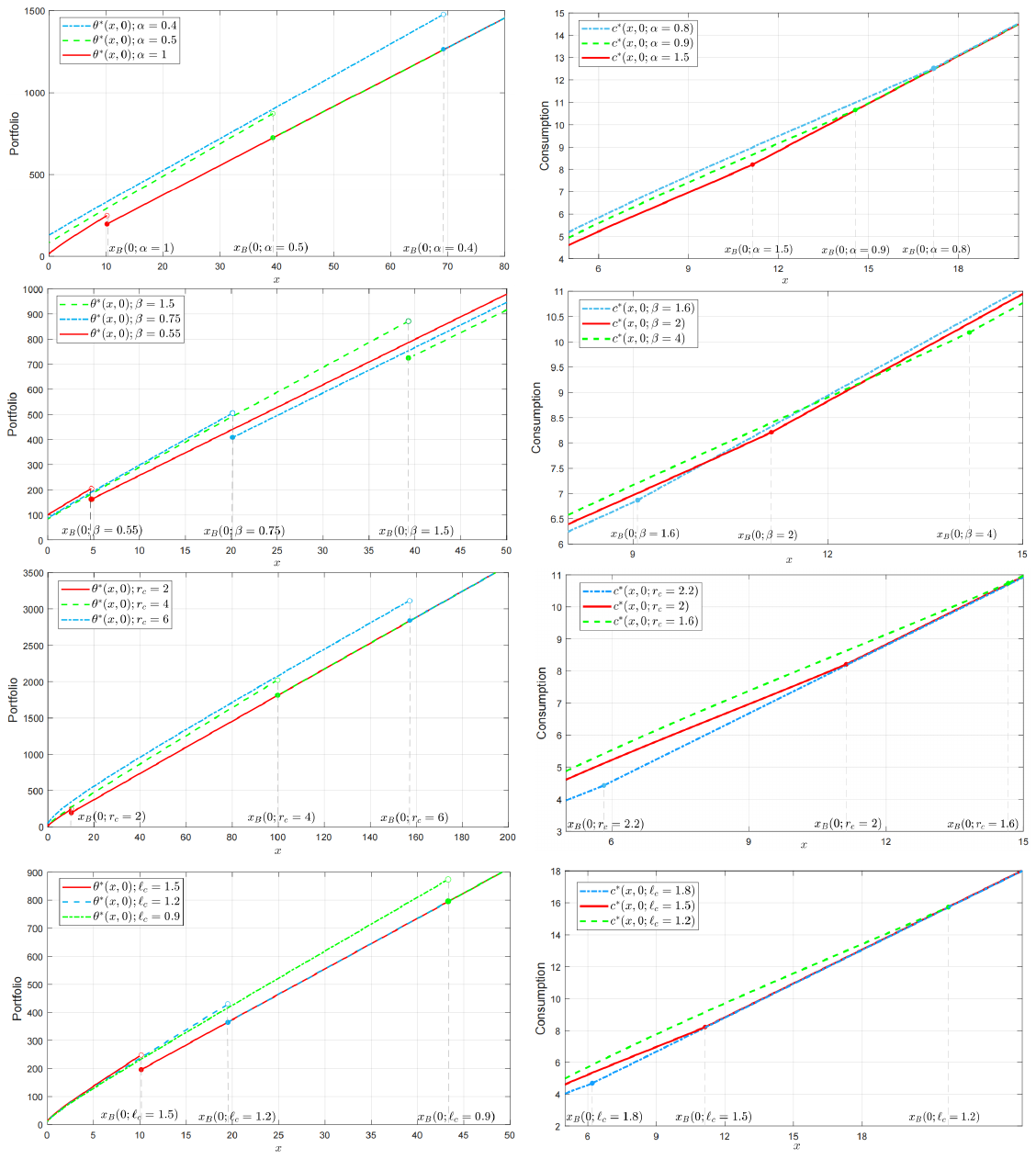}
    \caption{Comparison of portfolio-consumption strategies $\theta^*(x,z)$ and $c^*(x,z)$ under different parameters $(\alpha,\beta,r_c,\ell_c)$. The baseline parameters are set as 
    $z=0, \rho = 0.8, p = 0.45,\sigma = 0.1,\gamma = 1,\mu = 0.1$.}
    \label{fig:ex1_theta_c_sens}
\end{figure}
Figure~\ref{fig:ex1_theta_c_sens}  plots the optimal portfolio and consumption strategies across alternative parameter configurations, where the kink of each curve corresponds to the endogenous retirement wealth threshold $x_B$. A higher pre-retirement wealth shortfall risk management cost $\alpha$ amplifies investors’ downside risk aversion, reduces risky asset exposure, and raises the required precautionary wealth buffer. This pushes the endogenous retirement threshold $x_B$ upward and depresses current consumption to hedge against potential wealth shortages over the working period. An increase in the post-retirement wealth shortfall risk management cost $\beta$ motivates more aggressive risky investment during working life to accelerate wealth accumulation for covering future post-retirement shortfall risks, accompanied by a higher retirement threshold and restrained consumption after retirement. A larger labor income level $r_c$ enhances the human capital cushion, improves risk tolerance, promotes risky asset allocation, elevates lifetime resource endowment and the targeted retirement wealth level, and meanwhile increases optimal consumption in line with the permanent income hypothesis. A higher leisure utility level $\ell_c$ strengthens the appeal of retirement, lowers the critical retirement threshold $x_B$, induces more conservative portfolio strategies, and moderately raises current consumption to balance lifetime labor and leisure trade-offs.
Overall, higher shortfall risk costs amplify precautionary saving and risk avoidance, while income endowment and leisure preference shape the life-cycle equilibrium by altering budget constraints and utility trade-offs.

To further characterize the effects of retirement options on the investors' shortfall risk, the following corollary provides the expected largest shortfall risk measure before and after retirement.
\begin{corollary}\label{coro:ELS}
The expected largest shortfall risk (ELS) is given by
\begin{align*}
&\Ex\left[\alpha\int_0^{\tau^*}
e^{-\rho t} dA_t^{\theta^*,c^*}+\beta\int_{\tau^*}^\infty
e^{-\rho t} dA_t^{\theta^*,c^*}\right]=\left(
\frac{\rho}{\xi}K-\frac{(p-1){\alpha^{\frac{1}{p-1}}}}{\rho(1-p)-\xi p}\right) v_x(x,0)+ \alpha ^{{\rho \over \xi}+1}K (v_x(x,0))^{-\frac{\rho}{\xi}}\\
&\quad+\frac{(p-1)^2}{p[\rho(1-p)-\xi p]}(v_x(x,0))^{\frac{p}{p-1}}-\frac{\ell_c}{\rho}+\frac{\hat{g}(y_B)}{\rho y_B + \xi \alpha^{\frac{\rho}{\xi}+1} y_B^{-\frac{\rho}{\xi}}} \left(\rho v_x(x,0) + \xi \alpha^{\frac{\rho}{\xi}+1} (v_x(x,0))^{-\frac{\rho}{\xi}}\right)-v(x,0),
\end{align*}
where the constant $K$ is defined by
\begin{align*}
K:=\frac{\xi}{\xi y_B^{-\rho/\xi}\alpha^{ \frac{\rho}{\xi}+1}+\rho   y_B}\left(\frac{\ell_c}{\rho }-\frac{(p-1)^2(y_B^{\frac{1}{p-1}}-\frac{p}{p-1}\alpha^{\frac{1}{p-1}})}{p[\rho(1-p)-\xi p]} y_B\right).
\end{align*}
\end{corollary}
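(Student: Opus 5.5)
The ELS can be obtained as the residual of the value decomposition rather than by computing $\Ex[\int e^{-\rho t}dL^X_t]$ directly. By Theorem~\ref{th:duality}, Corollary~\ref{coro:originalstrategy} and the identification $dA_t=dL^X_t$ (the shortfall running maximum and the local time of the reflected state coincide after the shift in Remark~\ref{rem:Lipvxz}), the optimal triplet gives
\begin{align*}
v(x,0)=\Ex\Big[\int_0^{\tau^*}e^{-\rho t}(U(c_t^*)-\ell_c)dt+\int_{\tau^*}^\infty e^{-\rho t}U(c^*_t)dt\Big]-\mathrm{ELS}.
\end{align*}
Hence $\mathrm{ELS}=\Ex[\cdots]-v(x,0)$. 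It therefore suffices to compute the expected discounted consumption utility net of leisure cost along the optimal path, which explains the final term $-v(x,0)$ in the statement.

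To compute this expectation, I will pass to the dual. Along the optimal path $c^*_t=I_U(Y_t)$, where $Y$ is $Y^{\alpha,*}$ before $\tau^*$ and $Y^{\tau,\beta,*}$ after, and $Y_0=y^*=v_x(x,0)$ by the duality theorem. Thus $U(c^*_t)=\tilde U(Y_t)+Y_tI_U(Y_t)=\frac{1}{p}Y_t^{p/(p-1)}$. Define
\begin{align*}
h(y):=\Ex_y\Big[\int_0^{\tau^*}e^{-\rho t}\big(\tfrac1p (Y^\alpha_t)^{\frac{p}{p-1}}-\ell_c\big)dt+e^{-\rho\tau^*}g(Y^\alpha_{\tau^*})\Big],\qquad g(y):=\Ex_y\Big[\int_0^\infty e^{-\rho t}\tfrac1p (Y^\beta_t)^{\frac p{p-1}}dt\Big].
\end{align*}
By Lemma~\ref{lem:ex1}, $\tau^*$ is the hitting time of $y_B$, so $h$ solves the ODE
\begin{align*}
\xi y^2h''+\rho yh'-\rho h+\tfrac1p y^{\frac p{p-1}}-\ell_c=0\quad\text{on }(y_B,\alpha),
\end{align*}
with Neumann condition $h'(\alpha)=0$ coming from the reflection, and Dirichlet condition $h(y_B)=g(y_B)=:\hat g(y_B)$. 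The function $g$ is obtained the same way on $(0,\beta]$ with $g'(\beta)=0$, giving an explicit power function plus a linear term.

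The ODE on $(y_B,\alpha)$ has homogeneous solutions $y$ and $y^{-\rho/\xi}$. A particular solution is $-\ell_c/\rho+\frac{(p-1)^2}{p[\rho(1-p)-\xi p]}y^{p/(p-1)}$, which accounts for the $(v_x)^{p/(p-1)}$ and $-\ell_c/\rho$ terms. The general solution is therefore
\begin{align*}
h(y)=A y+B y^{-\rho/\xi}+\text{particular}.
\end{align*}
I will impose $h'(\alpha)=0$ and $h(y_B)=\hat g(y_B)$, which is a $2\times2$ linear system. I will organise its solution in the basis $\phi(y)=\rho y+\xi\alpha^{\rho/\xi+1}y^{-\rho/\xi}$, which satisfies $\phi'(\alpha)=0$, together with the correction needed to kill the particular solution's derivative at $\alpha$. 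That correction produces the $\alpha^{1/(p-1)}$ coefficient. The Dirichlet condition then fixes the multiplier of $\phi$, giving the $\hat g(y_B)\phi(y)/\phi(y_B)$ term and the constant $K$. Substituting $y=v_x(x,0)$ gives the formula.

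The main obstacle is rigor rather than algebra, and it has two parts. First, I must justify the Feynman–Kac representation with reflection at $\alpha$ and killing at $y_B$. This requires a verification argument applying Itô's formula to $e^{-\rho t}h(Y_t)$, where the $dL^\alpha$ term vanishes because $h'(\alpha)=0$, combined with the integrability from Lemma~\ref{lem:growth} to send the localisation to infinity. Second, I must confirm that $\int e^{-\rho t}dL^X_t$ under the optimal control is finite, so that the subtraction is legitimate; this follows from the finiteness of $v$ and of the utility term. Case (i) of Lemma~\ref{lem:ex1} ($y_B=\alpha$) is degenerate: there $\tau^*=0$, and the formula reduces to its post-retirement part.
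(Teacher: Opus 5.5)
Your proposal is correct and is essentially the paper's proof: the ELS is obtained as the residual $\Ex[\text{discounted net consumption utility}]-v(x,0)$, the pre-retirement part solves $\xi y^2h''+\rho yh'-\rho h+\frac1p y^{p/(p-1)}-\ell_c=0$ on $(y_B,\alpha)$ with $h'(\alpha)=0$, and the post-retirement part equals $\hat g(y_B)\,\Ex_y[e^{-\rho\tau^*}]=\hat g(y_B)\phi(y)/\phi(y_B)$. The paper writes your $h$ as $f+g$, where $f$ satisfies the inhomogeneous equation with $f(y_B)=0$ and $g$ the homogeneous one with $g(y_B)=\hat g(y_B)$; by linearity this is the same decomposition. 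One caveat, shared with the paper: substituting $y=v_x(x,0)$ into this closed form is valid only for $x\le x_B(0)$ (i.e.\ $v_x(x,0)\ge y_B$), whereas for $x>x_B(0)$ one has $\tau^*=0$ and the ELS is simply $\hat g(v_x(x,0))-v(x,0)$.
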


\begin{figure}[!h]
    \centering
    \includegraphics[width=0.95\linewidth]{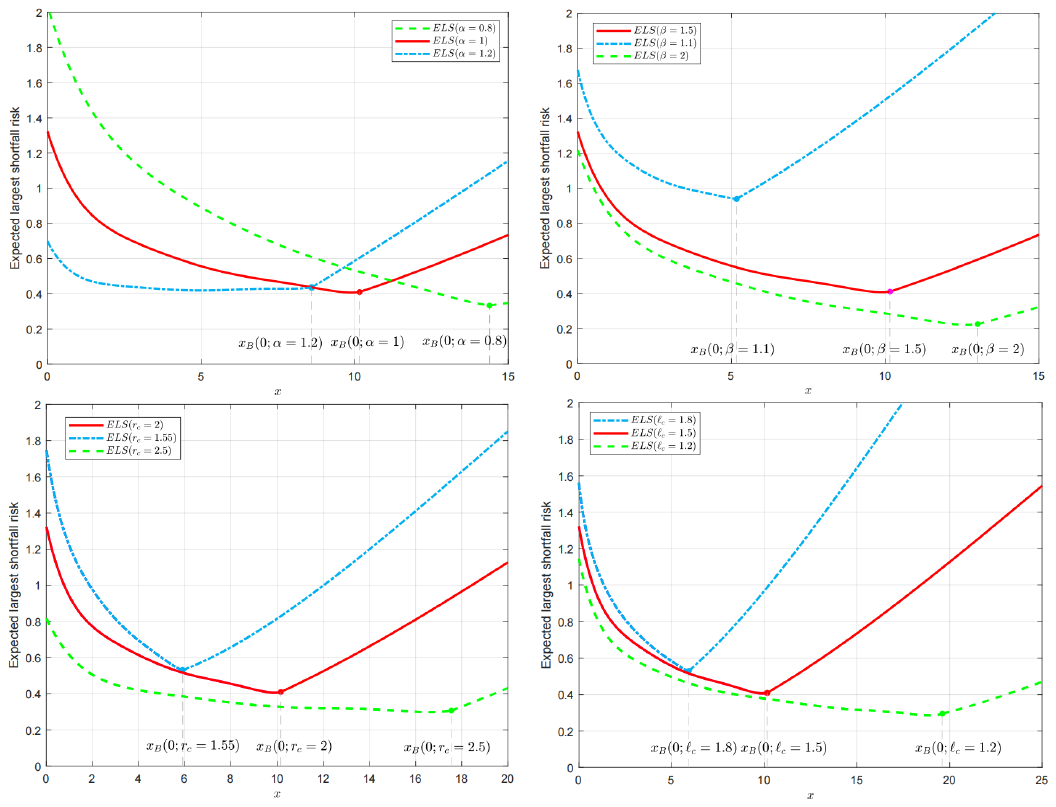}
    \caption{The expected largest shortfall risk under different parameters $(\alpha,\beta,r_c,\ell_c)$. The baseline parameters are set as $z=0, \rho = 0.8$, $p = 0.45$, $\sigma = 0.1$, $\gamma = 1$, $\mu = 0.1$. 
    }
    \label{fig:ELSrisk}
\end{figure}
Figure~\ref{fig:ELSrisk} illustrates the expected largest shortfall (ELS)  risk across different wealth levels under varying parameters. Consistent with the risk-mitigation effect of wealth during working life, the ELS exhibits a U-shaped pattern: it declines with wealth for \(x < x_B\) as accumulated assets buffer downside risk, and rises for \(x \ge x_B\) when consumption must be fully financed from wealth, exposing a larger asset pool to market volatility.  
This non-monotonicity distinguishes our framework from classical models (\citealt{farhi2007saving}), where standard lifecycle trade-offs govern retirement timing. We explore an additional channel: the retirement boundary marks a structural shift in downside-risk exposure, as labor income mitigates risk only during working life. The comparative statics further illustrate this contribution.  
 Higher pre-retirement (\(\alpha\)) and post-retirement (\(\beta\)) shortfall risk costs shift the ELS profile upward and delay retirement, as stronger precautionary motives encourage greater wealth accumulation to meet elevated risk penalties. Conversely, higher labor income (\(r_c\)) and leisure utility (\(\ell_c\)) reduce overall ELS levels and accelerate retirement, since labor income provides an effective risk hedge and stronger leisure preferences incentivize earlier exit from the workforce.

\subsection{Example with GBM Benchmark under Consistent ELS Risk Cost}\label{sec:ex2}
This subsection provides an explicit example in which the investor tracks a GBM benchmark process and faces identical  ELS risk management costs before and after retirement, i.e., $\alpha=\beta$. 
To obtain a closed-form solution in this setting, we consider the constant labor income and leisure levels, i.e., $r_t \equiv r_c$ and $\ell_t \equiv \ell_c$. This special case isolates the effect of the benchmark dynamics from the differential cost structure studied earlier. By setting the pre- and post-retirement shortfall costs equal, we can focus exclusively on how the stochastic benchmark $Z$ influences the investor's retirement decision, optimal portfolio choice, and consumption policy.

We first have the following lemma whose proof is similar to that of Lemma~\ref{lem:ex1}, and hence we omit it.

\begin{lemma}
Consider $\alpha=\beta$, $r_t\equiv r_c$ and $\ell_t\equiv \ell_c$. Then, the solution of VI~\eqref{eq:tildev} is given by $\tilde v(y,z)=\tilde G(y,z)+ P(y)$ for $y\in(0,\alpha]$, where $P(y)$ for $y\in(0,\alpha]$ has the following explicit representation: 
\begin{itemize}
\item[{\rm(i)}] if $\alpha \leq \frac{\ell_c}{r_c}$, then $P(y)\equiv 0$ for all $y\in(0,\alpha]$;
\item [{\rm(ii)}] if $\alpha> \frac{\ell_c}{r_c}$, then we have
\begin{align*}
P(y)=\left(C_1 y+C_2{y}^{-\frac{\rho}{\xi}}-\frac{r_c}{\rho+\xi}y\ln y-\frac{\ell_c}{\rho}\right)\mathds 1_{\{y>y_B\}},\quad \forall y\in(0,\alpha],
\end{align*}
where the constants $C_1=\left(\frac{r_c(1+\ln y_B)}{\rho+\xi}+\frac{\ell_c}{\rho+\xi}y_B^{-1}-\frac{\rho r_c}{(\rho+\xi)^2}\right)$, $C_2=\frac{\ell_c\xi/\rho-r_c\xi/(\rho+\xi)y_B}{\rho+\xi}{y_B}^{\frac{\rho}{\xi}}$ and $y_B\in (0,\frac{\ell_c}{r_c}]$ is the unique solution of the following equation given by
\begin{align*}
\left(\frac{\ell_c/r_c }{y_B}-\frac{ \rho }{\rho+\xi}\right)\left[1- \left(\frac{y_B}{\alpha}\right)^{\frac{\rho}{\xi}+1}\right]+\ln\left(\frac{y_B}{\alpha}\right) = 0.
\end{align*}
Furthermore, the optimal retirement boundary for the original problem is given by
\begin{align*}
x_B(z)=-\tilde G_y(y_B,z)=\frac{(1-p)^2\left(y_B^{-\frac{1}{1-p}}-\alpha^{-\frac{1}{1-p}}\right)}{\rho (1-p)-\xi p}-z\left(1-(y_B/\alpha)^{\frac{\rho}{\xi}-1}\right),~~\forall z>0.    
\end{align*}
\end{itemize} 
\end{lemma}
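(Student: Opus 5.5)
We reduce the two-dimensional stopping problem \eqref{eq:tildev} to a one-dimensional problem in $y$ alone. With $\alpha=\beta$, $r=\ell=0$ and constant income and leisure, we compare the running reward in \eqref{eq:tildev} with that defining $\tilde G$ in \eqref{eq:tildeG}. The post-retirement process $Y^{\tau,\beta}$ then coincides with the continuation of $Y^\alpha$. Hence, by the strong Markov property,
\begin{align*}
e^{-\rho\tau}\tilde G(Y^\alpha_\tau,Z_\tau)=\Ex\left[\int_\tau^\infty e^{-\rho t}\left(\tilde U(Y^\alpha_t)-\mu_X Z_tY^\alpha_t\right)dt\,\Big|\,\mathcal F_\tau\right].
\end{align*}
Both the pre-retirement and post-retirement rewards contain the term $\tilde U(Y^\alpha_t)-\mu_X Z_tY^\alpha_t$. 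Subtracting $\tilde G(y,z)$, which is the value of $\tau=0$, the difference depends only on the running reward $r_cY^\alpha_t-\ell_c$. This gives
\begin{align*}
\tilde v(y,z)=\tilde G(y,z)+P(y),\qquad P(y):=\sup_{\tau\in\mathbb T}\Ex\left[\int_0^\tau e^{-\rho t}\left(r_cY^\alpha_t-\ell_c\right)dt\,\Big|\,Y^\alpha_0=y\right],
\end{align*}
which is independent of $z$ because $Y^\alpha$ is autonomous. Lemma~\ref{lem:growth} supplies the integrability needed for the tower-property step.

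Next we solve the one-dimensional problem for $P$. Its variational inequality is
\[
\max\left\{\xi y^2P''+\rho yP'-\rho P+r_cy-\ell_c,\,-P\right\}=0,\qquad P'(\alpha)=0,
\]
since $\mu^\top(\sigma\sigma^\top)^{-1}\mu/2=\xi$.

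\textbf{Case (i).} If $\alpha\le \ell_c/r_c$, the running reward $r_cY^\alpha_t-\ell_c$ is nonpositive because $Y^\alpha\le\alpha$. Therefore $\tau=0$ is optimal and $P\equiv0$.

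\textbf{Case (ii).} If $\alpha>\ell_c/r_c$, it is never optimal to stop on $\{y>\ell_c/r_c\}$, where the reward is positive. Using the monotonicity from Lemma~\ref{lem:derivativev_G} and the threshold structure of Lemma~\ref{lem:sectio}, we look for a stopping region of the form $(0,y_B]$ with $y_B\le \ell_c/r_c$. On $(y_B,\alpha)$ the ODE is of Euler type. Its homogeneous solutions are $y$ and $y^{-\rho/\xi}$, the roots of $\xi m(m-1)+\rho m-\rho=0$. A particular solution is $-\frac{r_c}{\rho+\xi}y\ln y-\frac{\ell_c}{\rho}$, which can be checked directly: the $y\ln y$ term generates $-(\xi+\rho)\frac{r_c}{\rho+\xi}y=-r_cy$ after cancellation.

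The three unknowns $C_1,C_2,y_B$ are fixed by three conditions: continuous pass $P(y_B)=0$, smooth fit $P'(y_B)=0$, and the Neumann condition $P'(\alpha)=0$. The first two are linear in $(C_1,C_2)$ and yield the stated formulas. Substituting these into $P'(\alpha)=0$ gives the transcendental equation for $y_B$; the right-hand side is $0$ because $P$ no longer carries a $\tilde G_y(\alpha,\cdot)$ term, as $\tilde G_y(\alpha,z)=0$ when $\alpha=\beta$.

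\textbf{Main obstacle.} The key step is proving existence and uniqueness of the root $y_B\in(0,\ell_c/r_c]$ and then verifying optimality. Let $h(s)$ denote the left-hand side of the equation, written as a function of $s=y_B$. We plan to show $h(s)\to+\infty$ as $s\downarrow0$, because the term $\ell_c/(r_cs)$ dominates $\ln s$. At $s=\ell_c/r_c$ we have $h\le0$; this follows from the elementary inequality relating $1-u^{\rho/\xi+1}$ and $-\ln u$ for $u=s/\alpha<1$. Strict monotonicity of $h$ on this interval comes from differentiating.

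For verification, we show $P>0$ and $\mathcal LP+r_cy-\ell_c\le0$ on $(0,y_B)$, which holds because $y_B\le \ell_c/r_c$. Then a standard It\^o–Tanaka argument applies to the reflected process $Y^\alpha$; the reflection term vanishes since $P'(\alpha)=0$.

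Finally, $x_B(z)=-\tilde v_y(y_B,z)=-\tilde G_y(y_B,z)$ by smooth fit. Differentiating \eqref{eq:explicitformtildeG} with $\beta=\alpha$ gives the stated formula for $x_B(z)$.
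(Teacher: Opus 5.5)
Your proof is correct, and its first half takes a genuinely different route from the paper. The paper omits this proof as ``similar to Lemma~\ref{lem:ex1}''. On that route one sets $P:=\tilde v-\tilde G$ and reads off from \eqref{eq:tildevGyBz} and the PDE for $\tilde G$ that $P$ solves the Euler ODE. The boundary data are continuous pass, smooth fit, and the Neumann datum $P'(\alpha)=-\tilde G_y(\alpha,z)$, which vanishes when $\alpha=\beta$. The explicit candidate is then identified with $\tilde v$ through the asserted well-posedness of the two-dimensional VI, and the $z$-independence of $P$ would have to be justified separately.

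You instead observe that for $\alpha=\beta$ the post-retirement dual process is just the continuation of $Y^\alpha$, by uniqueness for the Skorokhod problem. The strong Markov property then rewrites \eqref{eq:tildev} as $\tilde G(y,z)+\sup_{\tau}\Ex[\int_0^\tau e^{-\rho t}(r_cY^\alpha_t-\ell_c)\,dt]$. This gives the $z$-independence and the zero obstacle for free, and it makes case (i) a one-line observation. It also replaces the appeal to VI theory by a one-dimensional verification argument, in which $P'(\alpha)=0$ kills the reflection term. From there your computations coincide with the paper's. The constants $C_1,C_2$ follow from continuous pass and smooth fit, and your root analysis is right: $h'(s)=\frac{s-\ell_c/r_c}{s^2}\bigl(1+\frac{\rho}{\xi}(s/\alpha)^{\rho/\xi+1}\bigr)<0$ on $(0,\ell_c/r_c)$.

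Two points need tightening.

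\textbf{Positivity of the candidate.} You assert but do not prove that the candidate is positive on $(y_B,\alpha]$. The paper's comparison argument in Lemma~\ref{lem:ex1} used $P'(\alpha)=M>0$, which is now $0$, but it still works here. Set $Q:=P'$; it solves $\xi y^2Q''+(\rho+2\xi)yQ'=-r_c<0$ on $(y_B,\alpha)$ with $Q(y_B)=Q(\alpha)=0$. So $Q$ has no interior minimum, hence $Q>0$ there and $P>0$ on $(y_B,\alpha]$.

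\textbf{The formula for $x_B(z)$.} Your last sentence is inaccurate. Differentiating \eqref{eq:explicitformtildeG} with $\beta=\alpha$ gives $-\tilde G_y(y_B,z)=\frac{(1-p)^2(y_B^{-1/(1-p)}-\alpha^{-1/(1-p)})}{\rho(1-p)-\xi p}-z\bigl(1-(y_B/\alpha)^{\kappa-1}\bigr)$. The exponent is $\kappa-1$, not the printed $\rho/\xi-1$. The printed exponent looks like a misprint. With $\kappa\in(0,1)$ your formula makes $x_B$ increasing in $z$, as Lemma~\ref{lem:Pz} requires, whereas the printed one would make it decreasing whenever $\rho>\xi$. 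You should state what your computation actually yields and flag the discrepancy, rather than claim agreement with the statement.
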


Figure~\ref{fig:valuezs} plots the dual-space value functions \(\tilde{v}(y,z)\), \(\tilde{G}(y,z)\) and primal-space value functions \(v(x,z)\), \(G(x,z)\) across different tracking benchmark states \(z \in \{0,10,30\}\). 
As the benchmark $z$ rises, the entire value function profile shifts downward in the dual space and upward in the primal space, accompanied by a rightward shift of the endogenous retirement boundary. This monotonicity of the retirement boundary with respect to the benchmark state is formally established in the following Lemma~\ref{lem:Pz}, which also holds for the general case $\alpha\leq\beta$. Intuitively, a higher tracking benchmark state $z$ raises the required wealth level to meet the tracking target, strengthening the incentive for wealth accumulation during working life. Consequently, the investor delays retirement and increases the targeted wealth threshold \(x_B(z)\). At the same time, the widening gap between \(v(x,z)\) and \(G(x,z)\) at higher wealth levels highlights the growing value of the continuation option to remain employed, as meeting the elevated benchmark target becomes more rewarding. Conversely, as wealth approaches \(x_B(z)\), the marginal benefit of continued work diminishes, and the value functions converge toward the retirement-only benchmark.

\begin{lemma}\label{lem:Pz}
Let Assumptions $(\boldsymbol{A}_{\boldsymbol{Z}})$ and    $(\boldsymbol{A}_{\boldsymbol{\rho}})$ hold. Then, if $\ell_c=r_c=0$ or $\ell=r=0$, the boundaries $z\mapsto y_B(z)$ defined in
\eqref{eq:stopingboundary}  and  $z\mapsto x_B(z)$ defined in
\eqref{eq:x_Bz} are non-decreasing on $\R_+$.
\end{lemma}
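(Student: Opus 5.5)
We want to show that $z\mapsto y_B(z)$ and $z\mapsto x_B(z)=-\tilde v_y(y_B(z),z)$ are non-decreasing when either $\ell_c=r_c=0$ or $\ell=r=0$. By Lemma~\ref{lem:sectio}, monotonicity of $y_B$ is equivalent to the inclusion $\mathcal S_{z_1}\subseteq\mathcal S_{z_2}$ for $z_1\le z_2$, where $\mathcal S_z$ is the $z$-section of the stopping set. Hence it suffices to show that the continuation premium
\begin{align*}
\Delta(y,z):=\tilde v(y,z)-\tilde G(y,z)\ge 0
\end{align*}
is non-increasing in $z$ for fixed $y$. Indeed, if $\Delta(y,z_1)=0$ and $\Delta(\cdot,\cdot)\ge 0$, then $\Delta(y,z_2)=0$, so $y\le y_B(z_2)$.

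\textbf{Step 1: rewrite $\Delta$ by Dynkin's formula.} Using the PDE that $\tilde G$ satisfies on $(0,\beta)$ (from its definition and Lemma~\ref{lem:structtildeG}), write
\begin{align*}
\Delta(y,z)=\sup_{\tau}\Ex\Big[\int_0^\tau e^{-\rho t}H(Y^\alpha_t,Z_t)\,dt-\int_0^\tau e^{-\rho t}\tilde G_y(\alpha,Z_t)\,dL^\alpha_t\Big],
\end{align*}
where
\begin{align*}
H(y,z):=\tilde{\cal L}\tilde G+\tilde U(y)+(r-\mu_X)yz+r_cy-\ell z-\ell_c=ryz+r_cy-\ell z-\ell_c.
\end{align*}
Here we use that $\tilde{\cal L}\tilde G+\tilde U-\mu_Xyz=0$. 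The singular term has $-\tilde G_y(\alpha,z)=a_0+z\,b_0$, with $a_0\ge0$ and $b_0=(\alpha/\beta)^{\kappa-1}-1\le 0$ since $\alpha\le\beta$ and $\kappa<1$.

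\textbf{Step 2: exploit the product structure.} The key point is that $Y^\alpha$ does not depend on $z$, while $Z^z_t=z\,\mathcal E_t$ is linear in $z$.
\begin{itemize}
\item[(a)] If $\ell=r=0$: then $H=r_cy-\ell_c$ does not depend on $z$. The integrand of $\Delta$ therefore depends on $z$ only through the term $z\,b_0\int e^{-\rho t}\mathcal E_t\,dL^\alpha_t$, which is non-increasing in $z$ for every $\tau$. Taking the supremum preserves this, so $\Delta$ is non-increasing in $z$.
\item[(b)] If $\ell_c=r_c=0$: then $H=z\,\mathcal E_t\,(rY_t-\ell)$, and $\Delta(y,z)=\sup_\tau\Ex[\,z\,\Phi_\tau+a_0\Psi_\tau]$ with $\Psi_\tau\ge0$ independent of $z$. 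Here the argument is different. I would use homogeneity: at $z=0$ there is no gain from continuing (the running reward is zero, modulo the $a_0$ term), so the premium is governed by the sign of the reward $z(ry-\ell)$ near the boundary. I expect this case to give monotonicity of the section via a scaling/ratio argument: the stopping set of $\sup_\tau\Ex[z\Phi_\tau+a_0\Psi_\tau]$ as a function of $z$ is determined by comparing $\Phi$ against $\Psi/z$, and increasing $z$ enlarges the region where stopping is optimal whenever $\Phi\le0$ dominates.
\end{itemize}
This sign control in case~(b) is the step I expect to be the \textbf{main obstacle}. I would first try to prove directly that $\partial_z\Delta\le0$ by differentiating under the optimal $\tau^*$, in the spirit of Lemma~\ref{prop:tvy}, using envelope arguments together with $z$-derivatives along $Z^z=z\mathcal E$.

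\textbf{Step 3: monotonicity of $x_B$.} Using smooth fit,
\begin{align*}
x_B(z)=-\tilde G_y(y_B(z),z).
\end{align*}
By the explicit formula \eqref{eq:explicitformtildeG},
\begin{align*}
-\tilde G_y(y,z)=A(y)+z\big((y/\beta)^{\kappa-1}-1\big),
\end{align*}
where $A$ is decreasing in $y$. Since $\tilde G$ is convex, $-\tilde G_y$ is decreasing in $y$, while the $z$-coefficient is non-negative because $y\le\beta$ and $\kappa<1$. Combining these with $y_B$ non-decreasing requires care, because the two effects compete. I would instead argue via the primal side: $x_B(z)$ is the boundary of $\{x:v(x,z)=G(x,z)\}$, and the ordering of the sections obtained in Step 2 transfers through the decreasing bijection $x=-\tilde v_y(y,z)$ together with Lemma~\ref{lem:derivativev_G}. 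Concretely, I would compare $x_B(z_1)$ and $x_B(z_2)$ by evaluating $-\tilde v_y$ at the respective boundaries and using monotonicity of $\tilde v_y$ in $z$, obtained from the representation \eqref{eq:tildevy}.
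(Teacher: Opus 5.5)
\textbf{Main gap: the sign of $b_0$ in Step~1.} Your expression $b_0=(\alpha/\beta)^{\kappa-1}-1$ is correct. But since $\alpha/\beta\le 1$ and $\kappa-1<0$, it gives $b_0\ge 0$, not $b_0\le 0$. Your own Step~3 formula, evaluated at $y=\alpha$, says the same. It must be so, because $-\tilde G_y(\alpha,z)=a_0+zb_0\ge 0$ for all $z\ge 0$.

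Consequently, in case (a) the functional $\Ex[\int_0^\tau e^{-\rho t}(r_cY^\alpha_t-\ell_c)\,dt+\int_0^\tau e^{-\rho t}(a_0+b_0z\mathcal E_t)\,dL^\alpha_t]$ is non-\emph{decreasing} in $z$ for every $\tau$. Your zero-set argument then gives $\mathcal S_{z_2}\subseteq\mathcal S_{z_1}$ for $z_1\le z_2$, i.e.\ $y_B$ is non-\emph{increasing}.

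This cannot be patched. If $\alpha<\beta$ and $\mu_X>0$ (so $\kappa<1$ and $b_0>0$), stopping at $\tau\equiv T$ gives
\begin{align*}
\Delta(y,z)\ \ge\ -\frac{\ell_c}{\rho}+z\,b_0\,\Ex\Big[\int_0^T e^{-\rho t}\mathcal E_t\,dL^\alpha_t\Big]\ >\ 0\quad\text{for all large } z.
\end{align*}
Hence $y_B(z)\to 0$ as $z\to\infty$, and $y_B$ can be non-decreasing only if the stopping set is empty for every $z$. Your route therefore yields the $y_B$-claim in case (a) only when $\alpha=\beta$; then $a_0=b_0=0$ and $\Delta$ does not depend on $z$.

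With the sign corrected, Step~3 becomes immediate in case (a). In $x_B(z)=-\tilde G_y(y_B(z),z)=A(y_B(z))+z\big((\beta/y_B(z))^{1-\kappa}-1\big)$, both terms are non-decreasing when $y_B$ is non-increasing. The two effects you expected to compete actually reinforce each other.

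\textbf{Case (b) and Step~3.} In case (b) there is no real obstacle. $\Phi_\tau$ (which also carries the $b_0$ local-time term) and $\Psi_\tau=\int_0^\tau e^{-\rho t}dL^\alpha_t\ge 0$ do not depend on $z$. So $\Delta(y,z)/z=\sup_\tau\Ex[\Phi_\tau+(a_0/z)\Psi_\tau]$ is non-increasing on $(0,\infty)$, its zero set in $y$ grows with $z$, and $y_B$ is non-decreasing there. Your remark that continuing brings no gain at $z=0$ is false when $a_0>0$: the premium there is $a_0\sup_\tau\Ex[\Psi_\tau]>0$.

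Step~3, however, is not a proof. Monotonicity of $\tilde v_y$ in $z$ is never established. Moreover, for $\alpha<\beta$ the $x_B$-claim fails in case (b): $\tau\equiv T$ shows $\Delta(y,z)>0$ for each fixed $y$ once $z$ is small. Hence $y_B(z)\downarrow 0$ as $z\downarrow 0$ and $x_B(z)\ge A(y_B(z))\to\infty$.

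\textbf{Comparison with the paper.} The paper differentiates the free-boundary problem in $z$. This gives $\tilde{\cal T}P_z+(ry-\ell)=0$ on the continuation region with $P_z(y_B(z),z)=0$. It then uses $y_B(z)\le(\ell z+\ell_c)/(rz+r_c)$ and, for $r_c=\ell_c=0$, signs $P_z$ by a maximum principle. It never uses the Neumann datum $P_{yz}(\alpha,z)=-\tilde G_{yz}(\alpha,z)=b_0$, which is exactly the term that decides the matter. It treats neither case (a) nor $x_B$. Also, for $\alpha=\beta$ one has $P=z\,p(y)$ with $p>0$ on the continuation region, so the sign $P_z\le 0$ asserted on $\{y_B(z)<y\le\ell/r\}$ is not available.

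Your representation via $Z^z=z\mathcal E$ is the sharper tool. With the sign fixed, it proves the lemma completely when $\alpha=\beta$: $y_B$ is constant for $z>0$ and $x_B(z)=A(y_B)+z((\beta/y_B)^{1-\kappa}-1)$ is non-decreasing. It also shows that for $\alpha<\beta$ the statement has to be modified.
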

\begin{figure}[h!]
    \centering
    \includegraphics[width=0.95\linewidth]{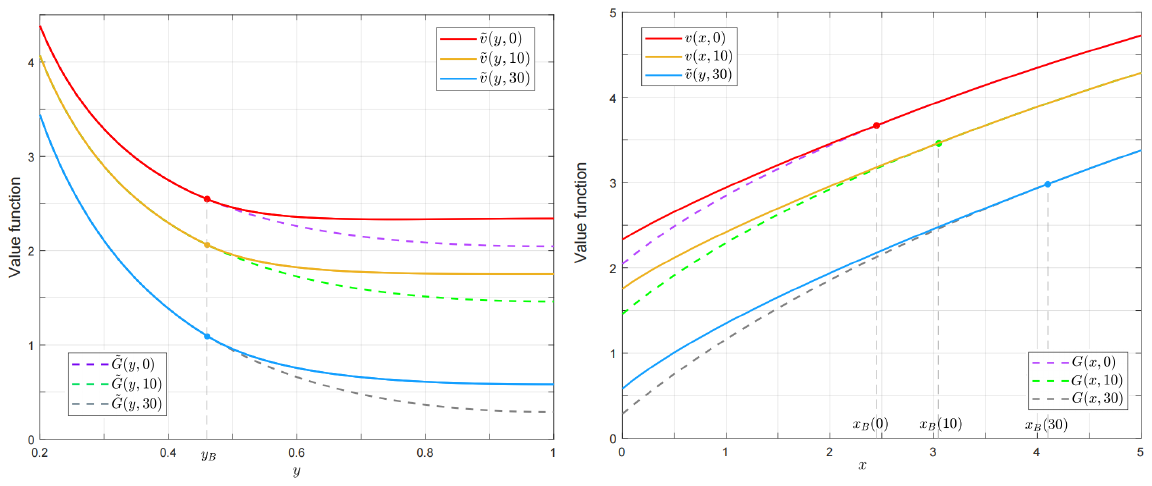}
    \caption{Comparison of value functions $\tilde{v}(y,z)$, $\tilde{G}(y,z)$, ${v}(y,z)$, and ${G}(y,z)$ with respect to      different $z$. The baseline parameters are set as $\rho = 0.7$, $\alpha = \beta = 1$, $p = 0.45$, $\mu_Z = 0.05$, $\sigma_Z = 0.01$, $\sigma = 0.2$, $\gamma = 1$, $\mu = 0.1$, where different color curves represent the results under adjusted $z\in\{0,10,30\}$.}
    \label{fig:valuezs}
\end{figure}

Figure~\ref{fig:thetazs}
 plots the optimal portfolio and consumption policies across different tracking-benchmark states $z$. The left panel illustrates that as $z$ rises, the optimal risky asset holdings \(\theta^*(x,z)\) become more conservative, with the entire portfolio profile shifting downward and the sensitivity to wealth weakening, while the endogenous retirement threshold \(x_B(z)\) shifts rightward. Simultaneously, the right panel illustrates that the optimal consumption \(c^*(x,z)\) is uniformly lower at all wealth levels for a smaller $z$, resulting in a flatter consumption profile over the working period. These patterns arise because a higher benchmark state imposes a more demanding tracking target, raising the wealth required to meet the benchmark at retirement and amplifying the perceived risk of falling short of the target. Consequently, investors adopt a more prudent investment strategy to mitigate downside risk, delay retirement to extend wealth accumulation, and suppress current consumption to strengthen savings buffers.
\begin{figure}[h!]
    \centering
    \includegraphics[width=0.95\linewidth]{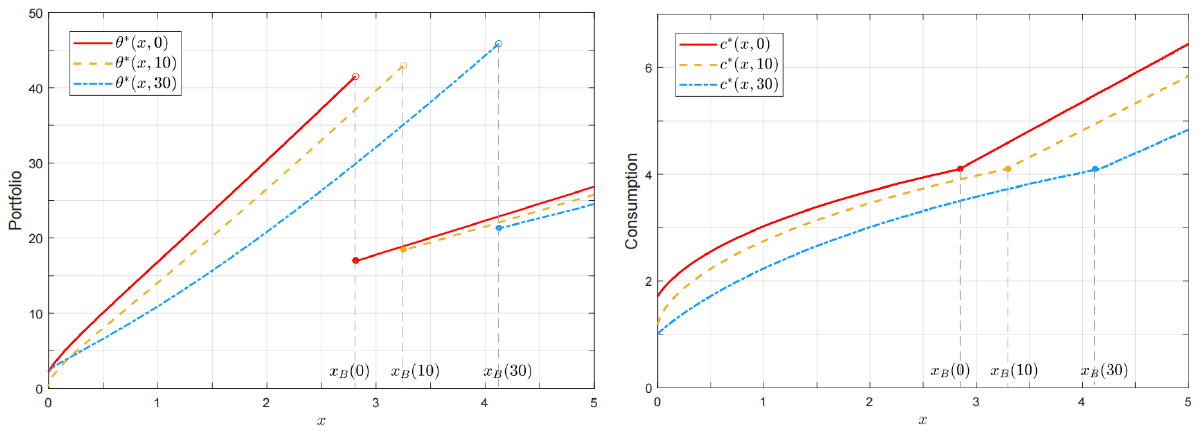}
    \caption{Comparison of   the optimal portfolio and consumption $(\theta^*,c^*)$ with respect to different $z$. The baseline parameters are set as $\rho = 0.7$, $\alpha = \beta = 1$, $p = 0.45$, $\mu_Z = 0.05$, $\sigma_Z = 0.01$, $\sigma = 0.2$, $\gamma = 1$, $\mu = 0.1$, where different color curves represent the results under adjusted $z\in\{0,10,30\}$.}
    \label{fig:thetazs}
\end{figure}

Figure~\ref{fig:ELS_z} illustrates the expected largest shortfall risk as a function of the investor's wealth level $x$, evaluated at three benchmark states $z\in\{0,5,10\}$. When $z=0$, the expected largest shortfall risk increases monotonically with wealth. For higher benchmark states, $z=5$ and $z=10$, the curves display a mild U-shaped pattern: the risk decreases slightly at very low wealth levels and then increases as wealth grows. The initial decline reflects the fact that a marginal increase in wealth provides an immediate buffer against benchmark shortfall when the investor starts from a severely underfunded position. Once wealth becomes sufficiently high, however, the expected largest shortfall risk increases with $x$. This increase is driven by the interaction between retirement timing and benchmark tracking: higher wealth brings the investor closer to the endogenous retirement boundary, after which labor income is lost, while the investor must still manage future benchmark shortfalls.
The figure also shows that the expected largest shortfall risk shifts upward as the benchmark state $z$ increases. This is intuitive because a higher initial benchmark level raises the gap $Z-V$ and therefore increases the potential running maximum shortfall. 
\begin{figure}[h!]
    \centering
    \includegraphics[width=0.55\linewidth]{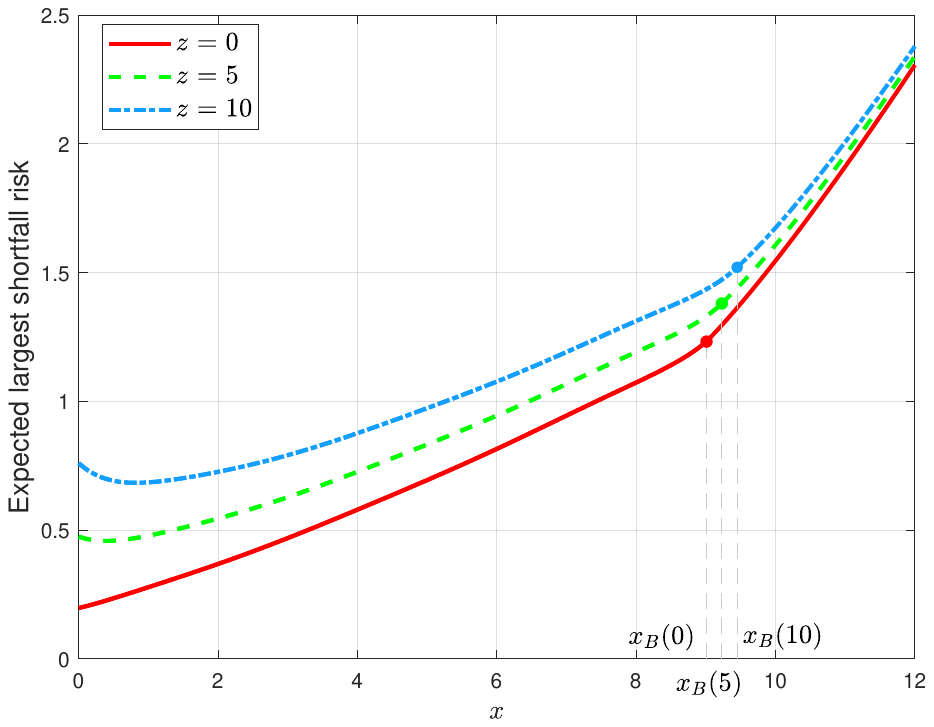}
    \caption{The expected largest shortfall risk for different $z$. The baseline parameters are set as $\rho = 0.7$, $\alpha =1$, $\beta = 1$, $p = 0.7$, $\sigma = 0.1$, $\sigma_Z=0.01$,  $\gamma = 1$, $\mu = 0.1$, $\mu_Z=0.05$, $r_c=2$ and $\ell_c=1.5$.}
    \label{fig:ELS_z}
\end{figure}
\section{Conclusion}\label{sec:conc}

This paper develops a tractable framework for a class of hybrid optimal stopping and control problems under benchmark tracking with relative performance evaluation. By introducing an auxiliary state process and establishing a convex duality approach, the original nonlinear problem with a maximum process is transformed into a two-dimensional optimal stopping problem with state reflection. This transformation enables a clear characterization of the retirement boundary, as well as explicit feedback-form solutions for optimal portfolio, consumption, and retirement decisions. 
Our main results reveal that retirement timing is governed by a wealth- and benchmark-dependent threshold, with optimal strategies exhibiting a two-stage structure across the working and retirement phases. Shortfall management costs, labor income, leisure preference, and benchmark intensity jointly shape investment behavior, consumption, retirement timing, and the expected largest shortfall risk. In particular, shortfall risk displays a U-shaped pattern in wealth, and stricter benchmark requirements induce more conservative behavior, higher wealth targets, and delayed retirement.

This study offers several avenues for future research.  First, allowing for heterogeneous agents and equilibrium interactions would enable the analysis of how benchmark-driven behavior aggregates in the market. Incorporating agents with different risk preferences, income processes, or benchmark sensitivities could reveal potential amplification effects, such as herding or systemic risk arising from relative performance concerns. Second, introducing impulse control features would allow the model to capture discrete, lumpy decisions such as one-time adjustments to the benchmark target, partial withdrawals, or large-scale portfolio rebalancing. Last but not least, a promising avenue is to develop reinforcement learning approaches for investment–consumption problems with endogenous retirement options and benchmark constraints in stochastic environments.  Embedding the current framework into a reinforcement learning setting would allow agents to learn optimal portfolio, consumption, and retirement policies directly from data under evolving market conditions and shifting benchmarks. Such an approach could capture adaptive behavior, handle model uncertainty and provide a flexible tool for solving high-dimensional problems where analytical methods become intractable, thereby enhancing the model’s relevance for real-world financial decision-making.

\bibliographystyle{informs2014}
\bibliography{ref}

@book{shiryaev2008optimal,
  title={Optimal Stopping Rules},
  author={Shiryaev, Albert N},
  year={2008},
  publisher={Springer}
}

@article{BodieMertonSamuelson1992,
  title={Labor supply flexibility and portfolio choice in a life cycle model},
  author={Bodie, Zvi and Merton, Robert C and Samuelson, William F},
  journal={Journal of Economic Dynamics and Control},
  volume={16},
  number={3-4},
  pages={427--449},
  year={1992},
  publisher={Elsevier}
}

@book{karatzas1998methods,
  title={Methods of Mathematical Finance},
  author={Karatzas, Ioannis and Shreve, Steven E and Karatzas, I and Shreve, Steven E},
  volume={39},
  year={1998},
  publisher={Springer}
}

@book{friedman1982variational,
  title={Variational Principles and Free-Boundary Problems},
  author={Friedman, Avner},
  year={1982},
  publisher={John Wiley \& Sons}
}

@article{dybvig2010lifetime,
  title={Lifetime consumption and investment: retirement and constrained borrowing},
  author={Dybvig, Philip H and Liu, Hong},
  journal={Journal of Economic Theory},
  volume={145},
  number={3},
  pages={885--907},
  year={2010},
  publisher={Elsevier}
}

@article{farhi2007saving,
  title={Saving and investing for early retirement: A theoretical analysis},
  author={Farhi, Emmanuel and Panageas, Stavros},
  journal={Journal of Financial Economics},
  volume={83},
  number={1},
  pages={87--121},
  year={2007},
  publisher={Elsevier}
}

@article{yang2018optimal,
  title={Optimal consumption and portfolio selection with early retirement option},
  author={Yang, Zhou and Koo, Hyeng Keun},
  journal={Mathematics of Operations Research},
  volume={43},
  number={4},
  pages={1378--1404},
  year={2018},
  publisher={INFORMS}
}

@article{jang2024optimal,
  title={Optimal investment, heterogeneous consumption, and best time for retirement},
  author={Jang, Hyun Jin and Xu, Zuo Quan and Zheng, Harry},
  journal={Operations Research},
  volume={72},
  number={2},
  pages={832--847},
  year={2024},
  publisher={INFORMS}
}

@article{jin2006disutility,
  title={Disutility, optimal retirement, and portfolio selection},
  author={Choi, Kyoung Jin and Shim, Gyoocheol},
  journal={Mathematical Finance},
  volume={16},
  number={2},
  pages={443--467},
  year={2006},
  publisher={Wiley Online Library}
}

@book{peskir2006optimal,
  title={Optimal {S}topping and {F}ree-{B}oundary {P}roblems},
  author={Peskir, Goran and Shiryaev, Albert},
  year={2006},
  publisher={Springer-Verlag}
}

@book{gilbarg1977elliptic,
  title={Elliptic {P}artial {D}ifferential {E}quations of {S}econd {O}rder},
  author={Gilbarg, David and Trudinger, Neil S},
  volume={224},
  number={2},
  year={1977},
  publisher={Springer}
}

@article{bo2021optimal,
  title={Optimal tracking portfolio with a ratcheting capital benchmark},
  author={Bo, Lijun and Liao, Huafu and Yu, Xiang},
  journal={SIAM Journal on Control and Optimization},
  volume={59},
  number={3},
  pages={2346--2380},
  year={2021},
  publisher={SIAM}
}

@article{browne1999reaching,
  title={Reaching goals by a deadline: Digital options and continuous-time active portfolio management},
  author={Browne, Sid},
  journal={Advances in Applied Probability},
  volume={31},
  number={2},
  pages={551--577},
  year={1999},
  publisher={Cambridge University Press}
}

@article{merton1969lifetime,
  title={Lifetime portfolio selection under uncertainty: The continuous-time case},
  author={Merton, Robert C},
  journal={The Review of Economics and Statistics},
  pages={247--257},
  year={1969},
  publisher={JSTOR}
}

@article{bensoussan1974nonlinear,
  title={Nonlinear variational inequalities and differential games with stopping times},
  author={Bensoussan, Alain and Friedman, Avner},
  journal={Journal of Functional Analysis},
  volume={16},
  number={3},
  pages={305--352},
  year={1974},
  publisher={Elsevier}
}

@article{farkas2021intra,
  title={Intra-Horizon expected shortfall and risk structure in models with jumps},
  author={Farkas, Walter and Mathys, Ludovic and Vasiljevi{\'c}, Nikola},
  journal={Mathematical Finance},
  volume={31},
  number={2},
  pages={772--823},
  year={2021},
  publisher={Wiley Online Library}
}

@article{pham2002minimizing,
  title={Minimizing shortfall risk and applications to finance and insurance problems},
  author={Pham, Huy{\^e}n},
  journal={The Annals of Applied Probability},
  volume={12},
  number={1},
  pages={143--172},
  year={2002},
  publisher={Institute of Mathematical Statistics}
}

@book{fleming2006controlled,
  title={Controlled Markov Processes and Viscosity Solutions},
  author={Fleming, Wendell H and Soner, H Mete},
  year={2006},
  publisher={Springer}
}

@article{friedman1973stochastic,
  title={Stochastic games and variational inequalities},
  author={Friedman, Avner},
  journal={Archive for Rational Mechanics and Analysis},
  volume={51},
  number={5},
  pages={321--346},
  year={1973},
  publisher={Springer}
}

@book{yong1999stochastic,
  title={Stochastic Controls: Hamiltonian Systems and HJB Equations},
  author={Yong, Jiongmin and Zhou, Xun Yu},
  volume={43},
  year={1999},
  publisher={Springer Science \& Business Media}
}

@article{karatzas1986explicit,
  title={Explicit solution of a general consumption/investment problem},
  author={Karatzas, Ioannis and Lehoczky, John P and Sethi, Suresh P and Shreve, Steven E},
  journal={Mathematics of Operations Research},
  volume={11},
  number={2},
  pages={261--294},
  year={1986},
  publisher={INFORMS}
}

@article{karatzas1987optimal,
  title={Optimal portfolio and consumption decisions for a “small investor” on a finite horizon},
  author={Karatzas, Ioannis and Lehoczky, John P and Shreve, Steven E},
  journal={SIAM Journal on Control and Optimization},
  volume={25},
  number={6},
  pages={1557--1586},
  year={1987},
  publisher={SIAM}
}

@article{cox1989optimal,
  title={Optimal consumption and portfolio policies when asset prices follow a diffusion process},
  author={Cox, John C and Huang, Chi F},
  journal={Journal of Economic Theory},
  volume={49},
  number={1},
  pages={33--83},
  year={1989},
  publisher={Elsevier}
}

@article{karatzas2000utility,
  title={Utility maximization with discretionary stopping},
  author={Karatzas, Ioannis and Wang, Hui},
  journal={SIAM Journal on Control and Optimization},
  volume={39},
  number={1},
  pages={306--329},
  year={2000},
  publisher={SIAM}
}

@article{yang2021optimal,
  title={Optimal retirement in a general market environment},
  author={Yang, Zhou and Koo, Hyeng Keun and Shin, Yong Hyun},
  journal={Applied Mathematics \& Optimization},
  volume={84},
  number={1},
  pages={1083--1130},
  year={2021},
  publisher={Springer}
}

@article{park2023robust,
  title={Robust retirement with return ambiguity: Optimal-stopping time in dual space},
  author={Park, Kyunghyun and Wong, Hoi Ying},
  journal={SIAM Journal on Control and Optimization},
  volume={61},
  number={3},
  pages={1009--1037},
  year={2023},
  publisher={SIAM}
}

@article{chen2022optimal,
  title={Optimal retirement under partial information},
  author={Chen, Kexin and Jeon, Junkee and Wong, Hoi Ying},
  journal={Mathematics of Operations Research},
  volume={47},
  number={3},
  pages={1802--1832},
  year={2022},
  publisher={INFORMS}
}

@article{jeon2020optimal,
  title={Optimal retirement and portfolio selection with consumption ratcheting},
  author={Jeon, Junkee and Park, Kyunghyun},
  journal={Mathematics and Financial Economics},
  volume={14},
  number={3},
  pages={353--397},
  year={2020},
  publisher={Springer}
}

@article{bo2026extended,
  title={An extended {M}erton problem with relaxed benchmark tracking},
  author={Bo, Lijun and Huang, Yijie and Yu, Xiang},
  journal={Mathematical Finance},
  volume={36},
  number={2},
  pages={422--448},
  year={2026},
  publisher={Wiley Online Library}
}

@article{strub2018optimal,
  title={Optimal construction and rebalancing of index-tracking portfolios},
  author={Strub, Oliver and Baumann, Philipp},
  journal={European Journal of Operational Research},
  volume={264},
  number={1},
  pages={370--387},
  year={2018},
  publisher={Elsevier}
}

@article{ni2022optimal,
  title={Optimal asset allocation for outperforming a stochastic benchmark target},
  author={Ni, Chendi and Li, Yuying and Forsyth, Peter and Carroll, Ray},
  journal={Quantitative Finance},
  volume={22},
  number={9},
  pages={1595--1626},
  year={2022},
  publisher={Taylor \& Francis}
}

@article{yao2006tracking,
  title={Tracking a financial benchmark using a few assets},
  author={Yao, David D and Zhang, Shuzhong and Zhou, Xun Yu},
  journal={Operations Research},
  volume={54},
  number={2},
  pages={232--246},
  year={2006},
  publisher={INFORMS}
}

@article{dybvig2011verification,
  title={Verification theorems for models of optimal consumption and investment with retirement and constrained borrowing},
  author={Dybvig, Philip H and Liu, Hong},
  journal={Mathematics of Operations Research},
  volume={36},
  number={4},
  pages={620--635},
  year={2011},
  publisher={INFORMS}
}

@article{choi2008optimal,
  title={Optimal portfolio, consumption-leisure and retirement choice problem with {CES} utility},
  author={Choi, Kyoung Jin and Shim, Gyoocheol and Shin, Yong Hyun},
  journal={Mathematical Finance},
  volume={18},
  number={3},
  pages={445--472},
  year={2008},
  publisher={Wiley Online Library}
}

@article{gaivoronski2005optimal,
  title={Optimal portfolio selection and dynamic benchmark tracking},
  author={Gaivoronski, Alexei A and Krylov, Sergiy and Van der Wijst, Nico},
  journal={European Journal of Operational Research},
  volume={163},
  number={1},
  pages={115--131},
  year={2005},
  publisher={Elsevier}
}

@article{bo2026optimal,
  title={Optimal consumption under relaxed benchmark tracking and consumption drawdown constraint},
  author={Bo, Lijun and Huang, Yijie and Yan, Kaixin and Yu, Xiang},
  journal={SIAM Journal on Financial Mathematics},
  volume={17},
  number={1},
  pages={78--117},
  year={2026},
  publisher={SIAM}
}

@article{bo2025stochastic,
  title={Stochastic control problems with state reflections arising from relaxed benchmark tracking},
  author={Bo, Lijun and Huang, Yijie and Yu, Xiang},
  journal={Mathematics of Operations Research},
  volume={50},
  number={4},
  pages={2526--2551},
  year={2025},
  publisher={INFORMS}
}

@article{merton1971optimum,
  title={Optimum consumption and portfolio rules in a continuous-time model},
  author={Merton, Robert C},
  journal={Journal of Economic Theory},
  volume={3},
  number={4},
  pages={373--413},
  year={1971},
  publisher={Elsevier}
}

@article{browne2000risk,
  title={Risk-constrained dynamic active portfolio management},
  author={Browne, Sid},
  journal={Management Science},
  volume={46},
  number={9},
  pages={1188--1199},
  year={2000},
  publisher={INFORMS}
}

@article{browne1999beating,
  title={Beating a moving target: Optimal portfolio strategies for outperforming a stochastic benchmark},
  author={Browne, Sid},
  journal={Finance and Stochastics},
  volume={3},
  number={3},
  pages={275--294},
  year={1999},
  publisher={Springer}
}

@article{de2015nonconvex,
  title={A nonconvex singular stochastic control problem and its related optimal stopping boundaries},
  author={De Angelis, Tiziano and Ferrari, Giorgio and Moriarty, John},
  journal={SIAM Journal on Control and Optimization},
  volume={53},
  number={3},
  pages={1199--1223},
  year={2015},
  publisher={SIAM}
}



\newpage 
\vspace{0.3cm}
\pagenumbering{arabic}
\section*{\center Electronic Companion (EC) To
  ``Optimal Consumption and\\  Retirement Time under Shortfall Risk Measure'' \\ {\normalsize	  Proofs of Auxiliary Results}}
\setcounter{equation}{0}
\setcounter{page}{1}\def\thepage{ec\arabic{page}}%
\renewcommand{\theequation}{EC.\arabic{equation}}
In this electronic companion, we provide proofs of the auxiliary results required for proving the main results in the paper’s main body.

\noindent{\bf Proof of Lemma~\ref{lem:XY}.}\quad
Let $Y_t:=Y_t^\alpha \mathds 1_{\{t<\tau\}}+Y_t^{\tau,\beta} \mathds 1_{\{t\geq \tau\}} $ and $L_t^Y:=\int_0^t \mathds 1_{\{s<\tau\}} d L_s^{\alpha}+ \int_0^t\mathds 1_{\{s\geq \tau\}} d L_s^{\beta,\tau}$ for $t\geq0$. Then, integration by parts yields that
\begin{align*}
    dY_t&= \mathds 1_{\{t<\tau\}} dY_t^\alpha+ \mathds 1_{\{t\geq \tau\}}d Y_t^{\tau,\beta}+(Y_{\tau}^{\tau,\beta}-Y_{\tau }^\alpha)\mathds 1_{\{\tau\in dt\}}=\mathds 1_{\{t<\tau\}} dY_t^\alpha+ \mathds 1_{\{t\geq \tau\}}d Y_t^{\tau,\beta}\notag\\
    &=\rho Y_t dt-\mu^\top (\sigma\sigma^\top)^{-1}\sigma Y_t dW_t-dL_t^Y.
\end{align*}
It follows from It\^o's rule that
\begin{align}\label{eq:tauif}
d\left(e^{-\rho t}X_tY_t\right)
&=e^{-\rho t}\phi_tY_t\,dW_t
-e^{-\rho t}X_t\,dL_t^Y
+e^{-\rho t}Y_t\,dL_t^X  \notag\\
&\quad
-e^{-\rho t}
\left(c_t+\mu_XZ_t-(rZ_t+r_c)\mathds 1_{\{t<\tau\}}\right)Y_tdt
\end{align}
with $\phi_t:=\theta_t^{\top}\sigma -\sigma_Z Z_t  {\gamma}^{\top}-\mu^{\top}(\sigma \sigma^\top)^{-1} \sigma X_t$ for $t\geq0$. 
Integrating with respect to $t$ on both sides of \eqref{eq:tauif} from $0$ to $T>0$, we arrive at
\begin{align}\label{eq:XYtau}
& e^{-\rho T}X_TY_T+\int_0^T e^{-\rho t}\left\{(c_t + \mu_X Z_t\mathds 1_{\{t<\tau\}})Y_t d t+ X_tdL_t^Y\right\}\notag\\
&\qquad= xy+\int_0^T e^{-\rho t} (rZ_t+r_c)\mathds 1_{\{t<\tau\}}dt+\int_0^T e^{-\rho t}  Y_t d L_t^X +\int_0^T e^{-\rho t} \phi_t Y_t dW_t:=M_T.
\end{align}
Since $\mu_X-r\geq 0$ and both $X,Y$ are nonnegative, we have from \eqref{eq:XYtau} that the process $M_T \geq 0$, a.s..  Define the stopping time $\tau_n:=\inf\{t\geq0;~\int_0^t|\phi_s Y_s|^2ds \geq n  \}$. Consequently,  $\tau_n\to\infty$ as $n\to\infty$. Recall that $0<Y_t^{\alpha}\leq \alpha$ and $0<Y_t^{\tau,\beta}\leq \beta$, $\Px$-a.s.. It follows from Fatou's lemma, the optional sampling theorem, and the monotone convergence theorem (MCT) that
\begin{align}\label{eq:EMT}
\mathbb{E}\left[M_{T }\right]&=\mathbb{E}\left[\liminf_{n \rightarrow \infty} M_{T\wedge\tau_n}\right] \leq \liminf_{n \rightarrow \infty} \mathbb{E}\left[M_{T \wedge\tau_n}\right]\nonumber\\
&=x y+\liminf_{n \rightarrow \infty} \mathbb{E}\left[\int_0^{T\wedge\tau_n} e^{-\rho t} (rZ_t+r_c)\mathds 1_{\{t<\tau\}}dt+\int_0^{T\wedge\tau_n} e^{-\rho t} Y_t d L_t^X+\int_0^{T \wedge\tau_n} e^{-\rho t} \phi_t Y_t dW_t\right]\nonumber \\
& =x y+\liminf_{n \rightarrow \infty} \mathbb{E}\left[\int_0^{T\wedge\tau_n} e^{-\rho t} (rZ_t+r_c)\mathds 1_{\{t<\tau\}}dt+\int_0^{T\wedge\tau_n} e^{-\rho t} Y_t d L_t^X\right]\notag\\
&=x y+\mathbb{E}\left[\int_0^{T} e^{-\rho t} (rZ_t+r_c)\mathds 1_{\{t<\tau\}}dt+\int_0^{T} e^{-\rho t} Y_t d L_t^X\right].
\end{align}
Letting $T\to \infty$ and using MCT again, we derive the desired budget constraint \eqref{eq:contrain}. \hfill$\Box$\\

\noindent{\bf Proof of Lemma~\ref{lem:growth}.}\quad
Noting that $Y^{\alpha}=(Y^{\alpha}_t)_{t\geq0}$ takes values on $(0,\alpha]$. Then, we have 
\begin{align*}
&\Ex\left[\int_0^{\infty} e^{-\rho t} \left|\tilde U(Y_t^\alpha)+(r-\mu_X) Z_t Y_t^\alpha-\ell Z_t+r_cY_t^\alpha-\ell_c\right|dt\right]\nonumber\\
&\quad\leq \Ex\left[\int_0^{\infty} e^{-\rho t} \left(\left|\tilde U(Y_t^\alpha)\right|+(r+\mu_X) Z_t Y_t^\alpha+\ell Z_t+r_cY_t^\alpha+\ell_c\right)dt\right]\nonumber\\
&\quad\leq \Ex\left[\int_0^{\infty} e^{-\rho t} \left(\left|\tilde U(Y_t^\alpha)\right|+ \alpha(r+\mu_X) Z_t+\ell Z_t+\alpha r_c+\ell_c\right)dt\right]\nonumber\\
&\quad=\frac{1}{|p|}\left(\dfrac{(1 - p)^3}{\rho(1 - p) - \xi p} y^{-\frac{p}{1 - p}} + \dfrac{p(1 - p)^2}{\rho(1 - p) - \xi p} \alpha^{-\frac{1}{1 - p}} y\right)+\frac{\alpha(r+\mu_X)+\ell}{\rho -\mu_Z}z+\frac{\alpha r_c+\ell_c}{\rho}\nonumber\\
&\quad<+\infty.
\end{align*}

We next show $\Ex[\sup_{t\geq 0}|
e^{-\rho t}\tilde G(Y_t^\alpha,Z_t)|]<+\infty$. Since $y \mapsto \tilde G(y,z)$ and $z\mapsto \tilde G(y,z)$ are both decreasing, $G(\alpha,z)\leq G(y,z)\leq G(y,0)$ for all $(y,z)\in(0,\alpha]\times\R_+$. Note that
\begin{align}\label{eq:Z1}
&\Ex\left[\sup_{t\geq 0}\left|
e^{-\rho t}\tilde G(\alpha,Z_t)\right|\right]\nonumber\\
&\quad\leq \dfrac{(1 - p)^3}{p(\rho(1 - p) - \xi p)} \alpha^{-\frac{p}{1 - p}} + \dfrac{(1 - p)^2}{\rho(1 - p) - \xi p} \beta^{-\frac{1}{1 - p}} \alpha +  \left| \alpha - \dfrac{\beta^{-(\kappa - 1)}}{\kappa} \alpha^\kappa \right|\Ex\left[\sup_{t\geq 0}
\left(e^{-\rho t}Z_t\right)\right].
\end{align}
By using Burkholder-Davis-Gundy (BDG) inequality, it holds that
\begin{align}\label{eq:Z2}
&\Ex\left[\sup_{t\geq 0}
\left(e^{-\rho t}Z_t\right)\right]=z+\Ex\left[\sup_{t\geq 0}\left(\int_0^t e^{-\rho s}\mu_Z Z_sds +\int_0^t e^{-\rho s}\sigma_Z Z_sdW^{\gamma}_s\right)\right]\nonumber\\
&\quad\leq z+\Ex\left[\int_0^{\infty} e^{-\rho s}|\mu_Z| Z_sds \right]+3\Ex\left[\left(\int_0^{\infty} e^{-2\rho s}\sigma_Z^2 Z_s^2ds\right)^{\frac{1}{2}}\right]\nonumber\\
&\quad\leq z+\frac{|\mu_Z|}{\rho-\mu_Z} +3|\sigma_Z|\left(\Ex\left[\int_0^{\infty} e^{-2\rho s} Z_s^2ds\right]\right)^{\frac{1}{2}}\leq z+\frac{|\mu_Z|}{\rho-\mu_Z} +3|\sigma_Z|\left(\frac{1}{\rho-2\mu_Z-\sigma_Z^2}\right)^{\frac{1}{2}}.
\end{align}
It follows from \eqref{eq:Z1} and \eqref{eq:Z2} that $\Ex[\sup_{t\geq 0}|e^{-\rho t}\tilde G(\alpha,Z_t)|]<+\infty$.

On the other hand, we have from Lemma \ref{lem:struct} that 
\begin{align}\label{eq:Y0}
&\Ex\left[\sup_{t\geq 0}\left|
e^{-\rho t}\tilde G(Y_t^{\alpha},0)\right|\right]\leq \dfrac{(1 - p)^3}{|p|(\rho(1 - p) - \xi p)}\Ex\left[\sup_{t\geq 0} e^{-\rho t}\left(Y_t^{\alpha}\right)^{\frac{p}{p-1}}\right] + \dfrac{(1 - p)^2}{\rho(1 - p) - \xi p} \beta^{-\frac{1}{1 - p}} \alpha.
\end{align}
By applying It\^o's formula, we have $e^{-\rho t} Y_t^{\alpha} = y  -\int_0^t  e^{-\rho s}d L_s^{\alpha}- {\mu^{ \top} (\sigma \sigma^\top)^{-1}\sigma } \int_0^t e^{-\rho s} Y_s^{\alpha}dW_s$ and 
\begin{align}\label{eq:Y2}
e^{-\rho t}\left(Y_t^{\alpha}\right)^{\frac{p}{p-1}}&=y^{\frac{p}{p-1}}+
\int_0^t \frac{p \xi-\rho (1-p)}{(1-p)^2}e^{-\rho s}\left(Y_s^{\alpha}\right)^{\frac{p}{p-1}} ds -\frac{p \alpha^\frac{1}{p-1} }{p-1}\int_0^t  e^{-\rho s}d L_s^{\alpha}\nonumber\\
&\quad- \frac{p\mu^{ \top} (\sigma \sigma^\top)^{-1}\sigma }{p-1} \int_0^t e^{-\rho s} (Y_s^{\alpha})^{\frac{p}{p-1}}dW_s.
\end{align}
Taking the expectation on both sides of the above equalities, we respectively have $\Ex[\int_0^t  e^{-\rho s}d L_s^{\alpha}]=y-\Ex\left[e^{-\rho t}Y_t^{\alpha}\right]\leq y$ and
\begin{align}\label{eq:Y4}
\Ex\left[e^{-\rho t}\left(Y_t^{\alpha}\right)^{\frac{p}{p-1}}\right]\leq y^{\frac{p}{p-1}} +\frac{p \alpha^\frac{1}{p-1} }{1-p}\Ex\left[\int_0^t  e^{-\rho s}d L_s^{\alpha}\right]
&\leq y^{\frac{p}{p-1}}+\frac{p \alpha^\frac{1}{p-1} }{1-p}y.
\end{align}
It follows from \eqref{eq:Y2}, \eqref{eq:Y4} and BDG inequality that
\begin{align}\label{eq:Y5}
&\Ex\left[\sup_{t\geq 0} e^{-\rho t}\left(Y_t^{\alpha}\right)^{\frac{p}{p-1}}\right]\leq y^{\frac{p}{p-1}}+\frac{p \alpha^\frac{1}{p-1} }{1-p}\Ex\left[\int_0^{\infty}  e^{-\rho s}d L_s^{\alpha}\right]+ \frac{3|p|\mu^{ \top} (\sigma \sigma^\top)^{-1}\sigma }{1-p} \Ex\left[\left( \int_0^{\infty} e^{-2\rho s} (Y_s^{\alpha})^{\frac{2p}{p-1}}ds\right)^{\frac{1}{2}}\right]\nonumber\\
&\qquad\leq y^{\frac{p}{p-1}}+\frac{p \alpha^\frac{1}{p-1} }{1-p}y+ \frac{3|p|\mu^{ \top} (\sigma \sigma^\top)^{-1}\sigma }{1-p} \left(\Ex\left[ \int_0^{\infty} e^{-2\rho s} (Y_s^{\alpha})^{\frac{2p}{p-1}}ds\right]\right)^{\frac{1}{2}}<\infty.
\end{align}
We obtain from \eqref{eq:Y0} and \eqref{eq:Y5} that $\Ex[\sup_{t\geq 0}|e^{-\rho t}\tilde G(Y_t,0)|]<+\infty$. Thus, we complete the proof of the lemma.
\hfill$\Box$\\

\noindent{\bf Proof of Lemma~\ref{lem:struct}.}\quad
For the claim (i), let $Y^{\alpha,y}=(Y_t^{\alpha,y})_{t\geq0}$ and $Z^z=(Z_t^{z})_{t\geq0}$ be respectively the RDP given by \eqref{eq:Y_t} and the benchmark process given by \eqref{eq:Zt} with respective initial values $Y_0^{\alpha,y}=y$ and $Z_0^{z}=z$. Let $\tau^*(y,z)\in\mathbb{T}$ be the optimal stopping time of problem \eqref{eq:tildev}. Then, by using \eqref{eq:tildev}, it follows that, for any $0\leq z_1<z_2$,
{\small\begin{align*}
& \tilde{v}(y, z_1)-\tilde{v}(y, z_2)\\
&\geq \mathbb{E}\left[\int_0^{\tau^*(y,z_2)} e^{-\rho s} \left(\tilde U(Y_s^{\alpha,y})+(r-\mu_X) Z_s^{z_1}Y_s^{\alpha,y}-\ell  Z_s^{z_1}+r_c Y_s^{\alpha,y}-\ell_c\right)ds+e^{-\rho \tau^*(y,z_2)}\tilde G(Y_{\tau^*(y,z_2)}^{\alpha,y},Z_{\tau^*(y,z_2)}^{z_1})\right]\nonumber\\
&\quad-\mathbb{E}\left[\int_0^{\tau^*(y,z_2)} e^{-\rho s} \left(\tilde U(Y_s^{\alpha,y})+(r-\mu_X) Z_s^{z_2}Y_s^{\alpha,y}-\ell  Z_s^{z_2}+r_c Y_s^{\alpha,y}-\ell_c
\right)ds+e^{-\rho \tau^*(y,z_2)}\tilde G(Y_{\tau^*(y,z_2)}^{\alpha,y},Z_{\tau^*(y,z_2)}^{z_2})\right]\nonumber\\
&=\Ex\left[
\int_0^{\tau^*(y,z_2)} e^{-\rho s }(r-\mu_X)( Z_s^{z_1} - Z_s^{z_2})Y_s^{\alpha,y} -\ell (Z_s^{z_1}-Z_s^{z_2})ds\right]\\
&\qquad+\Ex\left[e^{-\rho \tau^*(y,z_2)}\left(
\tilde G(Y_{\tau^*(y,z_2)},Z_{\tau^*(y,z_2)}^{z_1})-\tilde G(Y_{\tau^*(y,z_2)},Z_{\tau^*(y,z_2)}^{z_2})\right)\right]\geq 0.
\end{align*}}This concludes that $z\mapsto\tilde{v}(y,z)$ is decreasing for fixed $y\in(0,\alpha]$.

For the claim (ii), we first show $y \mapsto \tilde{v}(y,z)$ is non-increasing. Define $K_t=-\ln (Y_t^{\alpha}/\alpha)$ for $t>0$ with $K_0=k:=-\ln (y/\alpha)$ and $Y_0^{\alpha}=y\in(0,\alpha]$. By using It\^o's rule together with \eqref{eq:Y_t}, we obtain
\begin{align}\label{eq:Kt}
 K_t &= k(y) + \int_0^t \left(-\rho ds + \mu^{\top}(\sigma\sigma^{\top})^{-1}\sigma dW_s + \frac{dL_s^{\alpha}}{Y_s^{\alpha}}\right) + \frac{1}{2}\int_0^t\mu^{\top}\left(\sigma \sigma^{\top}\right)^{-1} \mu ds.   
\end{align}
Define the process $L_t^{K}:=\int_0^t\frac{dL_s^{\alpha}}{Y_s^{\alpha}}$ for $t\geq0$. Then, $t\to L_t^{K}$ is continuous and non-decreasing. Moreover, it holds that, a.s.
\begin{align*}
\int_{0}^t \mathds{1}_{\{K_s=0\}} dL_s^{K} =   \int_{0}^t \mathds{1}_{\{\ln\frac{Y_s^{\alpha}}{\alpha}=0\}} \frac{dL_s^{\alpha}}{Y_s^{\alpha}}=\int_{0}^t \mathds{1}_{\{Y_s^{\alpha}=\alpha\}} \frac{dL_s^{\alpha}}{Y_s^{\alpha}}=\frac{1}{\alpha}\int_{0}^t \mathds{1}_{\{Y_s^{\alpha}=\alpha\}} dL_s^{\alpha}=\frac{L_t^{\alpha}}{\alpha}=L_t^K,~~\forall t\geq0.
\end{align*}
This yields that $t\to L_t^{K}$ increases on the time set $\{t\geq0;~K_t=0\}$ only. Thus, it follows from \eqref{eq:Kt} that $K=(K_t)_{t\geq0}$ satisfies the following drifted Brownian motion with reflection at the boundary zero: 
\begin{align}\label{eq:Kt2}
 K_t &= k + (\xi-\rho)t +\mu^{\top}(\sigma\sigma^{\top})^{-1}\sigma W_t + L_t^K\geq0,\quad \forall t\geq0,
\end{align}
where, we recall that $\xi=\mu^{\top}(\sigma \sigma^{\top})^{-1} \mu / 2$ is defined in Lemma~\ref{lem:struct}.
The solution representation of ``the Skorokhod problem" yields that, when $Y_0^{\alpha}=y\in(0,\alpha]$,
\begin{align}\label{eq:LtR}
L_t^{K} =(f_t-k)^+,\quad f_t:=\sup_{s\leq t} \left((\rho-\xi)s -\mu^{\top}(\sigma\sigma^{\top})^{-1}\sigma W_s \right).
\end{align} 
Consequently, it holds that, for $t\geq0$,
\begin{align}\label{eq:Yrepresentation}
Y_t^\alpha=\alpha e^{-K_t}=\alpha e^{-k-(f_t-k)^{+}}\eta_t,\quad \eta_t:=e^{(\rho-\xi)t-\mu^\top (\sigma \sigma^\top)^{-1}\sigma W_t}.    
\end{align}
For any $y_1,y_2\in (0,\alpha]$ with  $y_1> y_2$, set $k_i:=\ln(\alpha/y_i)$ (equivalently $k_1<k_2$) and $L_t^{K^i}=(f_t-k_i)^+$ for $t\geq0$. In view of \eqref{eq:LtR}, we have
\begin{align*}
    k_1+L_t^{K^1}-k_2-L_t^{K^2}=\begin{cases}
      \displaystyle  \qquad 0,  &  \text{if } f_t\geq k_2,\\
      \displaystyle f_t-k_2<0, & \text{if } k_1<f_t< k_2,\\
      \displaystyle k_1-k_2<0, & \text{if } f_t\leq k_1.
    \end{cases}
\end{align*}
This yields from \eqref{eq:Yrepresentation} that $Y_t^{\alpha,y_1} :=\alpha e^{-k_1-L_t^{K^{1}}}\eta_t\geq  Y_t^{\alpha,y_2}:=\alpha e^{-k_2-L_t^{K^{2}}}\eta_t$ for all $t\geq0$. In other words, $y\to Y_t^{\alpha}$ is non-decreasing. On the other hand, since $y\to \tilde{U}(y)$ is non-increasing, we have $y\to\tilde{U}(Y_s^{\alpha,y})$ is non-increasing. By the assumption that $r-\mu_X(\cdot)\leq0$, one has $y\to (r-\mu_X)Z_sY_s^{\alpha,y}$ is non-increasing.  Together with the fact that $y\to\tilde{G}(Y_{\tau}^{\alpha,y},Z_{\tau})$ is non-increasing, it follows from \eqref{eq:tildev} that $y\to\tilde{v}(y,z)$ is non-increasing. Next, we show $y \mapsto \tilde{v}(y,z)$ is strictly convex. For $y_1,y_2\in(0,\alpha]$ with $y_1<y_2$ and $\lambda_1,\lambda_2>0$ with $\lambda_1+\lambda_2=1$, set $k_i:=\ln(\alpha/y_i)>0$ (equivalently $k_1>k_2$) and $\hat k:=\ln(\frac{\alpha }{\lambda_1 y_1+\lambda_2 y_2})$. Accordingly, $k_2<\hat k<k_1$. Thus, we have
\begin{align}\label{eq:invtildev}
& \tilde v (\lambda_1 y_1+\lambda_2 y_2,z) - \lambda_1\tilde{v}(y_1,z)-\lambda_2\tilde{v}(y_2,z)\nonumber\\
&\quad\leq \sup_{\tau\in \mathbb T
    }\Ex\bigg[\int_0^\tau e^{-\rho s} \big\{\tilde U(\alpha e^{-\hat k-(f_s-\hat k)^+} \eta_s)-\lambda_1\tilde U(\alpha e^{-k_1-(f_s-k_1)^+} \eta_s)-\lambda_2\tilde U(\alpha e^{-k_2-(f_s-k_2)^+} \eta_s)\nonumber\\
    &\qquad\qquad 
    +(\left(r-\mu_X\right)Z_s+r_c) \alpha \big(e^{-\hat k-(f_s-\hat k)^+}-\lambda_1e^{-k_1-(f_s-k_1)^+}-\lambda_2e^{-k_2-(f_s-k_2)^+}\big)\eta_s \big\}ds\\
    &\quad+e^{-\rho \tau}\big(\tilde G(\alpha e^{-\hat k-(f_\tau-\hat k)^+}\eta_\tau,Z_\tau)-\lambda_1\tilde G(\alpha e^{-k_1-(f_\tau-k_1)^+}\eta_\tau,Z_\tau)-\lambda_2\tilde G(\alpha e^{-k_2-(f_\tau-k_2)^+}\eta_\tau,Z_\tau)\big)\bigg].\nonumber
\end{align}
Denote by
\begin{align*}
\delta_s^{\tilde U} :=\tilde U\left(\alpha e^{-\hat k-(f_s-\hat k)^+} \eta_s\right)-\lambda_1 \tilde U\left(\alpha e^{-k_1-(f_s-k_1)^+} \eta_s\right)-\lambda_2 \tilde U\left(\alpha e^{-k_2-(f_s-k_2)^+} \eta_s \right).
\end{align*}
Note that ${\tilde U}'(\cdot)<0$ and ${\tilde U}''(\cdot)>0$. Then, for $s\geq0$, one has
 \begin{itemize}
    \item  if $f_s\leq k_2$, then $\delta_s^{\tilde U} = \tilde U\left((\lambda_1 y_1+\lambda_2 y_2) \eta_s\right)-\lambda_1 \tilde U\left(y_1 \eta_s\right)-\lambda_2 \tilde U\left( y_2 \eta_s \right)<0$;
    \item if $k_2<f_s\leq \hat k$, then $\delta_s^{\tilde U}= \tilde U((\lambda_1 y_1+\lambda_2 y_2) \eta_s)-\lambda_1 \tilde U(y_1 \eta_s)-\lambda_2 \tilde U(\alpha e^{-f_s } \eta_s)<\tilde U\left((\lambda_1 y_1+\lambda_2 y_2) \eta_s\right)-\lambda_1 \tilde U\left(y_1 \eta_s\right)-\lambda_2 \tilde U\left(y_2 \eta_s\right)<0$.
   
     \item   if $\hat k<f_s< k_1$, then $ \delta_s^{\tilde U} = \tilde U(\alpha e^{-f_s } \eta_s)-\lambda_1 \tilde U(y_1 \eta_s)-\lambda_2 \tilde U(\alpha e^{-f_s } \eta_s){=}\lambda_1( \tilde U(\alpha e^{-f_s } \eta_s)-  \tilde U(y_1 \eta_s))<0$.
    \item    if $ f_s\geq  k_1$, then $\delta_s^{\tilde U} = \tilde U(\alpha e^{-f_s } \eta_s)-\lambda_1 \tilde U(\alpha e^{-f_s } \eta_s)-\lambda_2 \tilde U(\alpha e^{-f_s } \eta_s)=0$.
\end{itemize}
Similarly, it is not difficult to check that, for $s\geq0$, 
\begin{align*}
   \delta_s^r:=((r-\mu_X) Z_s+r_c) \alpha\left(
  e^{-\hat k-(f_s-\hat k)^+}-\lambda_1  e^{-k_1-(f_s-k_1)^+} -\lambda_2 \alpha e^{-k_2-(f_s-k_2)^+} 
\right) \eta_s\leq0.
\end{align*}
On the other hand, since $y\to \tilde G(y,z)$ is strictly convex, it holds that
\begin{align*}
  \delta_\tau^{\tilde G}:=\tilde G\left(\alpha e^{-\hat k-(f_\tau-\hat k)^+}\eta_\tau,Z_\tau\right) 
-\lambda_1\tilde G(e^{-k_1-(f_s-k_1)^+}\eta_\tau,Z_\tau)-\lambda_2\tilde G(e^{-k_2-(f_s-k_2)^+}\eta_\tau,Z_\tau)
<0. 
\end{align*}
Combining the above estimates, it follows from \eqref{eq:invtildev} that $\tilde v (\lambda_1 y_1+\lambda_2 y_2,z) - \lambda_1\tilde{v}(y_1,z)-\lambda_2\tilde{v}(y_2,z)<0$. This shows that $y\to\tilde{v}(y,z)$ is strictly convex. Since $y\to\tilde{v}(y,z)$ is also decreasing, we hence have $y \mapsto \tilde{v}(y, z)$ is also strictly decreasing.
\hfill$\Box$\\

To prove Lemma \ref{lem:continuous}, we should first prove  Lemma \ref{prop:tvy}.\\
\noindent{\bf Proof of Lemma \ref{prop:tvy}.}\quad
For any $\epsilon>0$, set $k:=\ln(\frac{\alpha}{y})$, $u(k,z):=\tilde{v}(\alpha e^{-k},z)$ and $\tau^*_\epsilon:=\tau^*(y+\epsilon,z)=\tau^*(\alpha e^{-k}+\epsilon,z)$. Following the proof of Lemma~\ref{lem:struct}-(i), it holds that
\begin{align*}
e^{-{k}-L_s^{K^{k}}}-e^{-{(k+\epsilon)}-L_s^{K^{k+\epsilon}}}=\begin{cases}
\displaystyle~~~~~~~~~~~~~~~~~~~ 0,  &  \text{if } f_s\geq k+\epsilon,\\
\displaystyle e^{-f_s}-e^{-{(k+\epsilon)}}<e^{-k}-e^{-{(k+\epsilon)}}, & \text{if } k<f_s< k+\epsilon,\\
\displaystyle ~~~~~~~~~e^{-k}-e^{-{(k+\epsilon)}}, & \text{if } f_s\leq k.
    \end{cases}
\end{align*}
Then, using the fact that $\tilde U'(\cdot)<0$ and $\tilde U''(\cdot)>0$ satisfied by the utility function $U(\cdot)$ in \eqref{eq:utility}, we have from the mean value theorem that
\begin{align*}
u(k+\epsilon,z)-u(k,z)&\leq \Ex\Bigg[\int_0^{\tau_\epsilon^*} e^{-\rho s} \bigg(\tilde U\left(\alpha e^{-{(k+\epsilon)}-L_s^{K^{k+\epsilon}}} \eta_s\right)-\tilde U\left(\alpha e^{-{k}-L_s^{K^{k}}} \eta_s\right)\notag\\
&\qquad\quad+((r-\mu_X) Z_s^z+r_c) \alpha \left(e^{-{(k+\epsilon)}-L_s^{K^{k+\epsilon}}}-e^{-{k}-L_s^{K^{k}}}\right)\eta_s  \bigg)ds\notag\\
&\qquad\quad+e^{-\rho{\tau_\epsilon^*}}\left(\tilde G\left(\alpha e^{-{(k+\epsilon)}-L_\tau^{K^{k+\epsilon}}}\eta_\tau,Z_{\tau}^z\right)-\tilde G(\alpha e^{-{k}-L_{\tau_\epsilon^*}^{K^{k}}}\eta_{\tau_\epsilon^*},Z^z_{\tau_\epsilon^*})\right)\Bigg]\\
&\leq \Ex\Bigg[\int_0^{\tau_\epsilon^* \wedge \tau_f^{k+\epsilon}} e^{-\rho s} \left(\tilde U'(\alpha e^{-({k+\epsilon})-L_s^{K^{k+\epsilon}}} \eta_s)+(r-\mu_X) Z_s^z+r_c\right) \left(e^{-{(k+\epsilon)}}-e^{-{k}}\right)\alpha \eta_s ds\notag\\
&\qquad\quad+e^{-\rho {\tau_\epsilon^*}}\tilde G_y\left(\alpha e^{-({k+\epsilon})-L_{\tau_\epsilon^*}^{K^{k+\epsilon}}}\eta_{\tau_\epsilon^*},Z^z_{\tau_\epsilon^*}\right)\left(e^{-{(k+\epsilon)}}-e^{-{k}}\right)\alpha \eta_{\tau_\epsilon^*}\mathds 1_{\left\{\tau_\epsilon^*<\tau_f^{k+\epsilon}\right\}}\Bigg]\notag\\
 &\leq \epsilon \Ex\Bigg[\int_0^{\tau_\epsilon^* \wedge \tau_f^{k+\epsilon}} e^{-\rho s} \left(\tilde U'\left(\alpha e^{-({k+\epsilon})-L_s^{K^{k+\epsilon}}} \eta_s\right)+(r-\mu_X) Z_s^z+r_c  \right) \left(-e^{-{k}}\right)\alpha \eta_s ds\notag\\
&\qquad\quad+e^{-\rho {\tau_\epsilon^*}}\tilde G_y\left(\alpha e^{-({k+\epsilon})-L_{\tau_\epsilon^*}^{K^{k+\epsilon}}}\eta_{\tau_\epsilon^*},Z^z_{\tau_\epsilon^*}\right)
\left(-e^{-{k}}\right)\alpha \eta_{\tau_\epsilon^*}\mathds 1_{\left\{\tau_\epsilon^*<\tau_f^{k+\epsilon}\right\}}\Bigg].
\end{align*}
On the other hand, we repeat the above argument for $\tau^*$ to have
\begin{align*}
u(k+\epsilon,z)-u(k,z)
&\geq \mathbb{E}\Bigg[\,\int_0^{\tau^*} e^{-\rho s} \Bigg(\tilde{U}\Big(\alpha e^{-(k+\epsilon)-L_s^{K^{k+\epsilon}}}\eta_s\Big)
      -\tilde{U}\Big(\alpha e^{-k-L_s^{K^k}}\eta_s\Big) \\
&\qquad\quad+((r-\mu_X)Z_s^z\alpha+r_c)\Big(e^{-(k+\epsilon)-L_s^{K^{k+\epsilon}}}
      -e^{-k-L_s^{K^k}}\Big)\eta_s\Bigg)ds \\
&\qquad\quad+e^{-\rho\tau^*}\Big(\tilde{G}\big(\alpha e^{-(k+\epsilon)-L_{\tau^*}^{K^{k+\epsilon}}}\eta_{\tau^*},Z_{\tau^*}^z\big)
      -\tilde{G}\big(\alpha e^{-k-L_{\tau^*}^{K^k}}\eta_{\tau^*},Z_{\tau^*}^z\big)\Big)\Bigg] \\
&\geq \mathbb{E}\Bigg[\,\int_0^{\tau^*\wedge\tau_f^{k}} e^{-\rho s} \Big(\tilde{U}'\big(\alpha e^{-k-L_s^{K^k}}\eta_s\big)
      +(r-\mu_X)Z_s^z+r_c\Big)\big(e^{-(k+\epsilon)}-e^{-k}\big)\alpha\eta_s\,ds \\
&\qquad\quad+e^{-\rho\tau^*}\tilde{G}_y\big(\alpha e^{-k-L_{\tau^*}^{K^k}}\eta_{\tau^*},Z_{\tau^*}^z\big)
      \big(e^{-(k+\epsilon)}-e^{-k}\big)\alpha\eta_{\tau^*}\mathds{1}_{\{\tau^*<\tau_f^{k}\}}\Bigg] \\
&\geq \epsilon\,\mathbb{E}\Bigg[\,\int_0^{\tau^*\wedge\tau_f^{k}} e^{-\rho s} \Big(\tilde{U}'\big(\alpha e^{-k-L_s^{K^k}}\eta_s\big)
      +(r-\mu_X)Z_s^z+r_c\Big)(-e^{-(k+\epsilon)})\alpha\eta_s\,ds \\
&\qquad\quad+e^{-\rho\tau^*}\tilde{G}_y\big(\alpha e^{-k-L_{\tau^*}^{K^k}}\eta_{\tau^*},Z_{\tau^*}^z\big)
      (-e^{-(k+\epsilon)})\alpha\eta_{\tau^*}\mathds{1}_{\{\tau^*<\tau_f^{k}\}}\Bigg].
\end{align*}
As a result, we arrive at
\begin{align*}
u_k(k,z)=\lim_{\epsilon\to 0} \frac{u(k+\epsilon,z)-u(k,z)}{\epsilon}=  &\Ex\bigg[\int_0^{\tau^* \wedge \tau_f^{k }} e^{-\rho s} \left(\tilde U'\left(\alpha e^{-{k}} \eta_s\right)+(r-\mu_X) Z^z_s +r_c \right) \left(-e^{-k}\right)\alpha \eta_s ds\\
&\quad+e^{-\rho {\tau^*}}\tilde G_y\left(\alpha e^{-{k}}\eta_{\tau^*},Z^z_{\tau^*}\right)
\left(-e^{-{k}}\right)\alpha \eta_{\tau^*}\mathds 1_{\left\{\tau^*<\tau_f^{k}\right\}}\bigg].
\end{align*}
Then, the desired result follows from the fact that $\tilde v_y(y,z)=-\frac{1}{y}u_k(k,z)$ for $(y,z)\in(0,\alpha]\times\R_+$. 
\hfill$\Box$\\

\noindent{\bf Proof of Lemma~\ref{lem:continuous}.}\quad
By using Lemma~\ref{lem:struct}-(ii), for any $z\in\mathbb{R}_+$, the dual function $y \mapsto \tilde{v}(y, z)$ is decreasing. Thus, to establish the continuity of $(y,z)\mapsto \hat{v}(y,z)$, it suffices to show that $\tilde{v}(y,z)$ is continuous in $z$ for any fixed $y\in (0, \alpha]$, and continuous in $y$ for any fixed $z\in \mathbb{R}_+$. First, we prove continuity of $z\mapsto\tilde{v}(y,z)$. To this purpose, fix $y\in (0,\alpha]$ and let $(z_n)_{n \geq 1}\subset\mathbb{R}_+$ satisfying  $z_n\to z$ as $n \to \infty$. Then, we have from \eqref{eq:tildev} that
\begin{align}\label{eq:tildev-v0}
&|\tilde v(y,z)-  \tilde v(y,z_n)|
  =\left|\sup_{\tau \in \mathbb T} J(y,z;\tau)-\sup_{\tau \in \mathbb T} J(y,z_n;\tau)\right|
\leq \sup_{\tau \in \mathbb T}|J(y,z;\tau)-J(y,z_n;\tau)|\\
&\quad\leq \sup_{\tau \in \mathbb T} \Bigg|\mathbb{E}\bigg[\int_0^{\tau } e^{-\rho s} \left((r-\mu_X)Y_s^{\alpha,y}-\ell\right) \left( Z_s^z- Z_s^{z_n}\right)ds\bigg]\Bigg| + \sup_{\tau \in \mathbb T} \Bigg|\mathbb{E}\bigg[e^{-\rho \tau}\left(\tilde G(Y_{\tau}^{\alpha,y},Z_{\tau}^z)-\tilde G(Y_{\tau}^{\alpha,y},Z_{\tau}^{z_n})\right)\bigg]\Bigg|\notag\\
&\quad\leq \int_0^\infty e^{-\rho s}\Ex\left[\left((\mu_X-r)Y_s^{\alpha,y}+\ell\right)\left| Z_s^z- Z_s^{z_n}\right|\right]ds
+ \sup_{\tau \in \mathbb T} \left|\mathbb{E}\left[e^{-\rho \tau}\left(\tilde G(Y_{\tau}^{\alpha,y},Z_{\tau}^z)-\tilde G(Y_{\tau}^{\alpha,y},Z_{\tau}^{z_n})\right)\right]\right|.\nonumber
\end{align}
Since $Y_s^{\alpha,y} \in(0,\alpha]$ for all $s\geq0$, Assumption $(\boldsymbol{A_{\rho}})$ and \eqref{eq:Zt} yield that
\begin{align}\label{eq:limit-Z-0}
\int_0^\infty e^{-\rho s}\mathbb{E}\left[\left((\mu_X-r)Y_s^{\alpha,y}+\ell\right)\left|Z_s^z-Z_s^{z_n}\right|\right]ds 
  &\leq\left((\mu_X-r)\alpha+\ell\right)|z-z_n| \int_0^\infty e^{-(\rho-\mu_Z)s}ds \nonumber \\
  &=\frac{\alpha|z-z_n|}{\rho-\mu_Z} \to 0, \quad n\to\infty.
\end{align}
For the 2nd term in \eqref{eq:tildev-v0}, it follows from  \eqref{eq:explicitformtildeG} that
\begin{align}\label{eq:argument33}
&\sup_{\tau \in \mathbb{T}}\left|\mathbb{E}\left[e^{-\rho \tau} \left(\tilde G(Y_\tau^{\alpha,y},Z_\tau^z)-\tilde G(Y_\tau^{\alpha,y},Z_\tau^{z_n})\right)\right]\right|
\leq \sup_{\tau \in \mathbb{T}}\mathbb{E}\left[e^{-\rho \tau}|g(Y_\tau^{\alpha,y})||Z_\tau^z-Z_\tau^{z_n}|\right] \nonumber \\
&\quad\leq |g(\alpha)| \sup_{\tau \in \mathbb{T}} \mathbb{E}\left[e^{-\rho \tau}|Z_\tau^z-Z_\tau^{z_n}|\right]= |g(\alpha)||z-z_n| \sup_{\tau \in \mathbb{T}}\mathbb{E}\left[e^{-(\rho-\mu_Z)\tau}\right] \nonumber \\
&\quad\leq |g(\alpha)||z-z_n| \to 0, \quad n\to \infty
\end{align}
with $g(y):= y - \beta^{-(\kappa - 1)}y^\kappa/\kappa$. From \eqref{eq:tildev-v0}, \eqref{eq:limit-Z-0} and \eqref{eq:argument33}, we can conclude that $\tilde{v}(y,z)$ is continuous in $z$ for any $y\in (0, \alpha]$. Recall that Lemma \ref{prop:tvy} implies that $\tilde{v}(y,z)$ is continuous in $y$; thus, we complete the proof of the lemma.
\hfill$\Box$\\

\noindent{\bf Proof of Lemma~\ref{lem:derivativev_G}.}\quad
Recall that the process $\eta=(\eta_t)_{t\geq0}$ is defined in \eqref{eq:Yrepresentation}. Let $Y^0_t:=y\eta_t$ for $t\geq0$. Then, we have
\begin{align*}
dY^{0}_t=\rho Y^{0}_tdt-\mu^\top (\sigma\sigma^\top)^{-1}\sigma Y^{0}_tdW_t,\quad Y^{0}_0=y\in(0,\alpha].
\end{align*}
For the $\mathbb{F}$-stopping time $\tau_f^k$ defined in Proposition \ref{prop:tvy}, integration by parts for $e^{-\rho \tau^*} Y_{\tau^*}^0 \tilde G_y(Y_{\tau^*}^0,Z_\tau^*) \mathds 1_{\{\tau_f^k>\tau^*\}}$ yields from \eqref{eq:tildevy} that
{\small\begin{align*}
& y\tilde v_y(y,z)- y\tilde G_y(y,z)\nonumber\\
&=\Ex\left[\int_0^{\tau^*(y,z)\wedge\tau_f^{-\ln\frac{y}{\alpha}}} e^{-\rho s}\left( \tilde U'(Y_s^0)+(r-\mu_X) Z_s^z+r_c+\tilde {\cal A} \tilde G_y(Y_s^0,Z_s^z) \right)Y_s^0 ds-e^{-\rho \tau_f^k} \tilde G_y(\alpha,Z_{\tau_f^k}) \alpha \mathds 1_{\{\tau_f^k\leq  \tau^*\}}\right].
\end{align*}} 
By virtue of Proposition 6.2 in \cite{bo2026extended}, we have that $\tilde G(y,z)$ satisfies the following linear PDE, for $(y,z)\in O_{\beta}:=(0,\beta)\times \R_+$, 
\begin{align}\label{eq:tildeGpde}
\begin{cases}
    \displaystyle \tilde {\cal L} \tilde G(y,z)-\mu_X  y z+\tilde U(y) = 0,~~\forall (y,z)\in O_{\beta},\\[0.6em]
    \displaystyle \tilde G_y(\beta,z)= 0,~~\forall z\in\R_+.
\end{cases}
\end{align}
This also implies from \eqref{eq:tildeGpde} that $\tilde G_y(y,z)$ satisfies the following PDE, on $(y,z)\in O_{\alpha}\times \R_+$,
 \begin{align}
\begin{cases}
\displaystyle \tilde {\cal A} u+(r-\mu_X)z+r_c +\tilde U'(y)=0,\\[0.5em]
\displaystyle u(\beta,z)=0,\quad\forall z\in\R_+.
\end{cases}
\end{align}
Then, we obtain $ y\tilde v_y(y,z)- y\tilde G_y(y,z)=-\alpha \Ex[e^{-\rho \tau_f^k} \tilde G_y(\alpha,Z_{\tau_f^k}) \mathds 1_{\{\tau_f^k\leq  \tau^*\}}]$. Since $\tilde G_y(y,z)\leq 0$ for all $(y,z)\in (0, \alpha]\times \R_+$, we derive $\tilde v_y(y,z)-\tilde G_y(y,z)\geq 0$ for all $(y,z)\in (0, \alpha]\times \R_+$. 
\hfill$\Box$\\

\noindent{\bf Proof of Lemma~\ref{lem:sectio}.}\quad
Define the sets by $\hat{\cal C}:=\left\{(y,z) \in (0,\alpha]\times\R_+;~ y>y_B(z)\right\}$ and $
\hat{\cal S}:=\left\{(y,z) \in (0,\alpha]\times\R_+;~ y\leq y_B(z)\right\}$. Recall the fact $\tilde{v}(y,z)\geq \tilde{G}(y,z)$ for all $(y,z)\in(0,\alpha]\times\R$. 
Then, using  Lemma~\ref{lem:derivativev_G}, for any $y\in(0,y_B(z)]$, we have $0\leq \tilde v(y,z)-\tilde G(y,z)\leq \tilde v(y_B(z),z)-\tilde G(y_B(z),z)=0$. Thus, we derive $\mathcal{\hat S}\subset \mathcal{S}$, which also results in $\mathcal C=\mathcal{S}^c \subset \mathcal{\hat S}^c=\mathcal {\hat C}$. On the other hand, for any $y>y_B(z)$, Lemma~\ref{lem:struct}-(iii) and \eqref{eq:stopingboundary} jointly imply that $\tilde v(y,z)> \tilde G(y,z)$. Consequently, we have $\mathcal {\hat C} \subset \mathcal C$, and hence $ \mathcal {\hat C} = \mathcal C$. This would also lead to $\mathcal S=\hat{\cal S}$.
\hfill$\Box$\\

\noindent{\bf Proof of Theorem~\ref{th:duality}.}\quad
For any $(\tau,\theta, c)\in\mathbb{T}\times{\mathbb{U}^{\rm r}}$, it follows from the budget constraint \eqref{eq:contrain} in Lemma~\ref{lem:XY} that, for any $(x,y,z)\in\R_+\times(0,\alpha]\times\R_+$, 
\begin{align}\label{eq:ineqv1}
&\mathbb{E}\left[ \int_0^\infty e^{-\rho t} (U(c_t)-\ell_t \mathds 1_{\{t< \tau\}} )dt- \alpha \int_0^{\tau} e^{-\rho t}dL_t^X- \beta \int_\tau^{\infty} e^{-\rho t}dL_t^X\right]+xy -xy\nonumber\\
&\leq\mathbb{E}\left[ \int_0^\infty e^{-\rho t} (U(c_t)-(\ell Z_t+\ell_c )\mathds 1_{\{t< \tau\}} )dt- \alpha \int_0^{\tau} e^{-\rho t}dL_t^X- \beta \int_\tau^{\infty} e^{-\rho t}dL_t^X\right]+xy\nonumber\\
&\quad-\mathbb{E}\bigg[\int_0^{\tau} e^{-\rho t}\left(c_t +(\mu_X-r)Z_t-r_c\right)Y_t^{\alpha} d t
+\int_0^{\tau} e^{-\rho t} X_t d L_t^{\alpha}-\int_0^{\tau} e^{-\rho t} Y_t^\alpha d L_t^X \notag\\
&\qquad+\int_{\tau}^\infty e^{-\rho t}\left(c_t +\mu_X Z_t \right)Y_t^{\tau,\beta} d t
+\int_\tau^{\infty} e^{-\rho t} X_t d L_t^{\tau,\beta}-\int_{\tau}^{\infty} e^{-\rho t} Y_t^{{\tau,\beta}} d L_t^X\bigg]\\
&=\Ex\left[\int_0^{\tau}e^{-\rho t}\{U(c_t)-\ell Z_t-\ell_c-(c_t+(\mu_X-r)Z_t-r_c)Y_t^{\alpha}\}dt\right]+\Ex\left[\int_{\tau}^{\infty}e^{-\rho t}\{U(c_t)-(c_t+\mu_X Z_t)Y_t^{\tau,\beta}\}dt\right]\nonumber\\
&\quad+\Ex\left[\int_0^{\tau}e^{-\rho t}(Y_t^{\alpha}-\alpha)dL_t^{X}+\int_{\tau}^{\infty}e^{-\rho t}(Y_t^{\tau,\alpha}-\beta)dL_t^{X}-\int_0^{\tau} e^{-\rho t} X_t d L_t^{\alpha}-\int_\tau^{\infty} e^{-\rho t} X_t d L_t^{\tau,\beta}\right]+xy.\notag
\end{align}
Recall the LF transform $\tilde{U}(\cdot)$ given by \eqref{eq:tildeU2} of the utility function $U(\cdot)$, i.e. $\tilde U(y):=\sup_{x> 0}\{U(x)-x y\}$ for $y\in(0,\beta]$. This yields that, for any $\tau\in\mathbb{T}$,
\begin{align*}
\begin{cases}
 \displaystyle  \Ex\left[\int_0^{\tau}e^{-\rho t}\{U(c_t)-\ell Z_t-\ell_c-(c_t+(\mu_X-r)Z_t-r_c)Y_t^{\alpha}\}dt\right]\\[0.4em]
 \displaystyle \qquad\qquad \leq  \Ex\left[\int_0^{\tau}e^{-\rho t}\{\tilde{U}(Y_t^{\alpha})+((r-\mu_X) Z_t+r_c) Y_t^{\alpha}-\ell Z_t-\ell_c\}dt\right],\\[0.4em]
 \displaystyle \Ex\left[\int_{\tau}^{\infty}e^{-\rho t}\{U(c_t)-(c_t+\mu_X Z_t)Y_t^{\tau,\beta}\}dt\right]\\[0.4em]
 \displaystyle\qquad\qquad \leq \Ex\left[\int_{\tau}^{\infty}e^{-\rho t}\{\tilde{U}(Y_t^{\tau,\beta})-\mu_X Z_tY_t^{\tau,\beta}\}dt\right].
\end{cases}
\end{align*}
On the other hand, note that $Y_t^{\alpha}\in(0,\alpha]$, $Y_t^{\tau,\beta}\in(0,\beta]$ and $X_t\geq0$, $\Px$-a.s. for all $t\in[0,T]$. Then, we have
\begin{align*}
    \Ex\left[\int_0^{\tau}e^{-\rho t}(Y_t^{\alpha}-\alpha)dL_t^{X}+\int_{\tau}^{\infty}e^{-\rho t}(Y_t^{\tau,\alpha}-\beta)dL_t^{X}-\int_0^{\tau} e^{-\rho t} X_t d L_t^{\alpha}-\int_\tau^{\infty} e^{-\rho t} X_t d L_t^{\tau,\beta}\right]\leq0.
\end{align*}
As a result, we deduce from \eqref{eq:ineqv1} that, for any $(\tau,\theta, c)\in\mathbb{T}\times{\mathbb{U}^{\rm r}}$ and $(x,y,z)\in\R_+\times(0,\alpha]\times\R_+$, 
\begin{align}\label{eq:ineqv2}
&\mathbb{E}\left[ \int_0^\infty e^{-\rho t} (U(c_t)-(\ell Z_t+\ell_c) \mathds 1_{\{t< \tau\}} )dt- \alpha \int_0^{\tau} e^{-\rho t}dL_t^X- \beta \int_\tau^{\infty} e^{-\rho t}dL_t^X\right]\\
&\leq\Ex\left[\int_0^{\tau}e^{-\rho t}\{\tilde{U}(Y_t^{\alpha})+((r-\mu_X)Z_t+r_c)Y_t^{\alpha}-(\ell Z_t+\ell_c) \}dt+\int_{\tau}^{\infty}e^{-\rho t}\{\tilde{U}(Y_t^{\tau,\beta})-\mu_X Z_tY_t^{\tau,\beta}\}dt\right]+xy.\nonumber
\end{align}
Recall the payoff function $\tilde{G}:(0, \beta] \times \mathbb{R}_+ \mapsto \mathbb{R}$ defined by \eqref{eq:tildeG}. Consequently, using the strong Markov property, it results in that
\begin{align}\label{eq:strongmarkovproperty}
 \Ex\left[\int_{\tau}^{\infty}e^{-\rho t}\left(\tilde{U}(Y_t^{\tau,\beta})-\mu_X Z_tY_t^{\tau,\beta}\right)dt\right]=e^{-\rho\tau}\tilde{G}(Y_{\tau}^{\beta},Z_{\tau}),
\end{align}
where the RDP $Y^{\beta}=(Y_t^{\beta})_{t\geq0}$ satisfies the reflected SDE \eqref{eq:Y_t} with $\alpha$ replaced by $\beta$. As a consequence, we have from \eqref{eq:ineqv2} and \eqref{eq:strongmarkovproperty} that 
\begin{align}\label{eq:tildev0}
  v(x,z)&=\sup\limits_{(\tau,{\theta},c)\in\mathbb{T}\times{\mathbb{U}^{\rm r}}}\mathbb{E}\left[ \int_0^\infty e^{-\rho t} (U(c_t)-(\ell Z_t +\ell_c) \mathds 1_{\{t< \tau\}} )dt- \alpha \int_0^{\tau} e^{-\rho t}dL_t^X- \beta \int_\tau^{\infty} e^{-\rho t}dL_t^X\right]\notag\\
 &\leq \sup_{\tau\in\mathbb T}\mathbb{E}\left[\int_0^\tau e^{-\rho t} \left(\tilde U(Y_t^\alpha)+((r-\mu_X) Z_t+r_c) Y_t^\alpha-(\ell Z_t+\ell_c) \right)dt+e^{-\rho \tau}\tilde G(Y_\tau^\alpha,Z_\tau)\right]+xy\nonumber\\
 &=\tilde{v}(y,z)+xy.
 \end{align}

We next prove that, for $(x,z)\in\R_+^2$ fixed, 
$J(x, z ; \tau^*,\theta^*, c^*)\geq \tilde v(y,z)+xy$ for any $y\in(0,\alpha]$ and the triplet $(\tau^*,\theta^*, c^*)$ being given in~\eqref{eq:optimalcontr}. Let $y^*(x,z)$ be the function satisfying $-\tilde v_y(y^*(x,z),z)=x$. Since $\tilde v_{yy}>0$, $\tilde v_y(\alpha,z)=0$ and $\lim_{y\to 0}\tilde v(y,z)=-\infty$, we have that $y^*(x,z)$ is well-defined. Denote by $X^*=(X^*)_{t\geq 0}$ the state process \eqref{state-X} under the triplet of control strategy $(\tau^*,\theta^*,c^*)$ specified by \eqref{eq:optimalcontr}. Introduce the state process $\tilde X^*=(\tilde X^*)_{t\geq 0}$ by
\begin{align}
\tilde X_t^*=\begin{cases}
\displaystyle -\tilde v_{y}(Y_t^{\alpha,*},Z_t), & \text{ for }{0\leq t<\tau^{*}},\\[0.4em]
\displaystyle  -\tilde G_{y}(Y_t^{\tau^*,\beta,*},Z_t), & \text{ for }{t\geq \tau^{*} },
\end{cases}\quad \tilde X_0^*=-\tilde v_y(y^*(x,z),z)=x,
\end{align}
where $Y^{\alpha,*}=(Y_t^{\alpha,*})_{0\leq t<\tau^*}$ is given by \eqref{eq:Y_t} with $Y_0^{\alpha,*}=y^*(y,z)$ and $(Y_{t}^{\tau^*,\beta,*})_{t\geq\tau^*}$ is driven by~\eqref{eq:Y_tbeta} with the initial value satisfying $Y_{\tau^*}^{\tau^* ,\beta,*}=Y_{\tau^* }^{\alpha,*}$. 

Let $\tau^{*,n}:=\tau^*\wedge n$ for $n\geq1$. Then, $(\tau^{*,n})_{n\geq1}$ is a sequence of $\Fx$-stopping times that is monotonically increasing and converges to $\tau^*$ as $n\to \infty$. We introduce the process $\tilde X^{*,n}=(\tilde X^{*,n})_{ t\geq 0}$ satisfying
\begin{align*}
\tilde X_t^{*,n}=\begin{cases}
\displaystyle -\tilde v_{y}(Y_t^{\alpha,*},Z_t), & \text{ for }{0\leq t<\tau^{*,n}},\\[0.4em]
\displaystyle  -\tilde G_{y}(Y_t^{{\tau^{*,n},\beta,*}},Z_t), & \text{ for }t\geq \tau^{*,n},
\end{cases}\quad \tilde X_0^{*,n}=-\tilde v_y(Y_0^{\alpha,*},Z_0)=-\tilde v_y(y^*(x,z),z)=x,
\end{align*}
where $(Y_{t}^{\tau^{*,n},\beta,*})_{t\geq \tau^{*,n}}$ is given by~\eqref{eq:Y_tbeta} with the initial value satisfying $Y_{\tau^*}^{\tau^{*,n},\beta,*}=Y_{\tau^{*,n}}^{\alpha,*}$. 
From the DCT and the continuous mapping theorem, it follows that, for any $t\geq0$, $\tilde X_t^{*,n} \overset{\rm a.s.} \to \tilde X_t^{*}$ and $Y_{t}^{\tau^{*,n},\beta,*}\overset{\rm a.s.} \to Y_{t}^{\tau^{*},\beta,*}$ as $n\to\infty$.

We now first consider the time horizon $t\in[0,\tau^{*,n})$ that $\tilde v_{yy}(Y_t^*,Z_t)$ is well-defined. For any $0\leq t<\tau^{*,n}$, by applying It\^o's formula, we have
\begin{align}\label{eq:tildeX*1}
&d\tilde X^{*,n}_t=-\big(\rho Y_t^{\alpha,*}\tilde v_{yy}(Y_t^{\alpha,*},Z_t)+\xi (Y_t^{\alpha,*})^2\tilde v_{yyy}(Y_t^{\alpha,*},Z_t) +\mu_Z (Z_t)\tilde v_{yz}(Y_t^{\alpha,*},Z_t)+\frac{1}{2}\sigma_Z^2(Z_t) \tilde v_{yzz}(Y_t^{\alpha,*},Z_t)\notag\\
&\qquad-\mu^\top \sigma ^{-1}\sigma_Z(Z_t) \gamma Y_t^{\alpha,*} \tilde v_{yyz}(Y_t^{\alpha,*},Z_t)\big) dt+\left(
\mu^\top \sigma ^{-1}  Y_t^{\alpha,*}\tilde v_{yy}(Y_t^{\alpha,*},Z_t)-\gamma\sigma_Z(Z_t)\tilde v_{yz}(Y_t^{\alpha,*},Z_t)\right)dW_t\nonumber\\
&\qquad+\tilde v_{yy}(Y_t^{\alpha,*},Z_t)dL_t^{\alpha},
\end{align}
where we recall that $\xi:=\mu^{\top}(\sigma \sigma^{\top})^{-1}\mu /2$. From Eq.~\eqref{eq:vihatv}, it follows that, on $\{t<\tau^{*,n}\}$,  
\begin{align}\label{eq:tleqtau}
&\rho Y_t^{\alpha,*}\tilde v_{y y}(Y_t^{\alpha,*}, Z_t) +\xi \tilde v_{y y y}(Y_t^{\alpha,*},Z_t) {(Y_t^{\alpha,*})^2}+\mu_Z Z_t \tilde v_{y z}(Y_t^{\alpha,*},Z_t)+\frac{\sigma_Z(Z_t)^2}{2}\tilde v_{y z z}(Y_t^{\alpha,*}, Z_t)\notag\\
&\qquad\qquad-\mu^\top \sigma ^{-1} \sigma_Z\gamma Y_t^{\alpha,*}\tilde v_{y y z}(Y_t^{\alpha,*}, Z_t) \notag\\
&\qquad= -2 \xi Y_t^{\alpha,*}\tilde v_{y y}(Y_t^{\alpha,*}, Z_t)+\mu^\top \sigma ^{-1} \sigma_Z\gamma
\tilde {v}_{y z}(Y_t^{\alpha,*}, Z_t)+\mu_X Z_t-\tilde U^{\prime}(Y_t^{\alpha,*})-r.
\end{align}
For any $ t\geq \tau^{*,n}$, by applying It\^o's formula again, we arrive at
\begin{align}\label{eq:tildeX*2}
&d\tilde X^{*,n}_t=-\bigg(\rho Y_t^{{\tau^{*,n},\beta,*}}\tilde G_{yy}(Y_t^{{\tau^{*,n},\beta,*}},Z_t)+\xi (Y_t^{{\tau^{*,n},\beta,*}})^2\tilde G_{yyy}(Y_t^{{\tau^{*,n},\beta,*}},Z_t) +\mu_Z (Z_t)\tilde G_{yz}(Y_t^{{\tau^{*,n},\beta,*}},Z_t)\notag\\
&\qquad+\frac{1}{2}\sigma_Z^2(Z_t) \tilde G_{yzz}(Y_t^{{\tau^{*,n},\beta,*}},Z_t)-\mu^\top \sigma ^{-1}\sigma_Z(Z_t) \gamma Y_t^{{\tau^{*,n},\beta,*}} \tilde G_{yyz}(Y_t^{{\tau^{*,n},\beta,*}},Z_t)\bigg) dt \\
&\qquad+\left(\mu^\top \sigma ^{-1}  Y_t^{{\tau^{*,n},\beta,*}}\tilde G_{yy}(Y_t^{{\tau^{*,n},\beta,*}},Z_t)-\gamma\sigma_Z(Z_t)\tilde G_{yz}(Y_t^{{\tau^{*,n},\beta,*}},Z_t)\right)dW_t+\tilde G_{yy}(Y_t^{{\tau^{*,n},\beta,*}},Z_t)dL_t^{Y^{*,n}}.\notag
\end{align}
Differentiating w.r.t. $y$ on both sides of~\eqref{eq:tildeGpde}, it holds that, on $\{t\geq \tau^{*,n}\}$,
\begin{align}\label{eq:tgeqtau}
&\rho Y_t^{{\tau^{*,n},\beta,*}}\tilde G_{y y}(Y_t^{{\tau^{*,n},\beta,*}}, Z_t)  +\xi \tilde G_{y y y}(Y_t^{{\tau^{*,n},\beta,*}},Z_t){(Y_t^{{\tau^{*,n},\beta,*}})^2}+\mu_Z Z_t \tilde G_{y z}(Y_t^{{\tau^{*,n},\beta,*}},Z_t)\notag\\
&\qquad\quad+\frac{\sigma_Z(Z_t)^2}{2}\tilde G_{y z z}(Y_t^{{\tau^{*,n},\beta,*}}, Z_t)-\mu^\top \sigma_Z(Z_t)\gamma Y_t^{{\tau^{*,n},\beta,*}}\tilde G_{y y z}(Y_t^{{\tau^{*,n},\beta,*}}, Z_t)\\
&\quad=  -2 \xi Y_t^{{\tau^{*,n},\beta,*}}\tilde G_{y y}(Y_t^{{\tau^{*,n},\beta,*}}, Z_t)+\mu^\top (\sigma^\top)^{-1}\sigma_Z Z_t \gamma
\tilde G_{y z}(Y_t^{{\tau^{*,n},\beta,*}}, Z_t)+\mu_X Z_t-\tilde U^{\prime}(Y_t^{{\tau^{*,n},\beta,*}}). \notag
\end{align}
Combining \eqref{eq:tildeX*1}-\eqref{eq:tgeqtau}, we obtain $\tilde U'(Y_t^{*,n})=-I_U(Y_t^{*,n})=:-c_t^{*,n}$, and
\begin{align*}
\theta_t^{*,n}:=\begin{cases}
\displaystyle \left(\sigma \sigma^{\top}\right)^{-1}\left(\mu Y_t^{{ \alpha,*}} \tilde{v}_{y y}\left(Y_t^{{\alpha,*}}, Z_t\right)+\sigma_Z Z_t  \sigma \gamma \tilde{v}_{y z}\left(Y_t^{{ \alpha,*}}, Z_t\right)-\sigma_Z Z_t \sigma \gamma\right),\text{ if } 0\leq t<\tau^{*,n},\\[0.6em]
\displaystyle \left(\sigma \sigma^{\top}\right)^{-1}\left(\mu Y_t^{{\tau^{*,n},\beta,*}} \tilde{G}_{y y}\left(Y_t^{{\tau^{*,n},\beta,*}}, Z_t\right)+\sigma_Z Z_t  \sigma \gamma \tilde{v}_{y z}\left(Y_t^{{\tau^{*,n},\beta,*}}, Z_t\right)-\sigma_Z Z_t  \sigma \gamma\right), \text{ if } t\geq \tau^{*,n}.
\end{cases}
\end{align*}
Then, we have
\begin{align}
d\tilde X^{*,n}_t &= (\theta^{*,n}_t)^\top\mu dt+r_t\mathds {1}_{\{t< \tau^{*,n}\}}dt+(\theta^{*,n}_t)^\top \sigma dW_t-c_t^{*,n} dt -\mu_Z Z_t dt\nonumber\\
&\quad-\sigma_Z Z_t  dW^\gamma_t+dL_t^{\tilde X^{*,n}},
\end{align}
where the process $L^{\tilde X^{*,n}}=( L_t^{\tilde X^{*,n}})_{t\geq0}$ is defined by, for any $t\geq0$,
\begin{align*}
 L_t^{\tilde X^{*,n}}&:=\int_0^t(\tilde v_{yy}(Y_s^{\alpha,*},Z_s)\mathds {1}_{\{s<\tau^{*,n}\}}+\tilde G_{yy}(Y_s^{n,\beta,*},Z_s)\mathds {1}_{\{s\geq \tau^{*,n}\}})dL_s^{Y^{*,n}}\\
 &=\int_0^t(\tilde v_{yy}(\alpha,Z_s)\mathds {1}_{\{s<\tau^{*,n}\}}+\tilde G_{yy}(\beta,Z_s)\mathds {1}_{\{s\geq \tau^{*,n}\}})dL_s^{Y^{*,n}}.   
\end{align*}  
Note that, for $(y,z)\in (0,\beta]\times\R_+$, we have
\begin{align*}
\tilde v_y(y,z)<0,~~ \tilde v_y(\alpha,z)=0,~~ \tilde v_{yy}(y,z)>0,~~ \tilde G_y(y,z)<0,~~ \tilde G_y(\beta,z)=0,~~\tilde G_{yy}(y,z)>0.
\end{align*}
This yields that $L^{\tilde{X}^{*,n}}=(L_t^{\tilde{X}^{*,n}})_{t \geq 0}$ is a continuous and non-decreasing process with $L_0^{\tilde{X}^{*,n}}=0$, which increases on the time set $\{t \geq 0; \tilde{X}_t^{*,n}=0\}$ only. This implies that $\tilde{X}^{*,n}=(\tilde{X}_t^{*,n})_{t\geq0}$ taking values on $[0, \infty)$ is a reflected process and $L^{\tilde{X}^{*,n}}$ is the local time process of $\tilde{X}^{*,n}$. Using the solution representation of ``the Skorokhod problem", we obtain $X_t^{*,n}=\tilde{X}_t^{*,n}$ for all $t \geq 0$, which implies that 
$X_t^{*,n}=0 \Longleftrightarrow Y_t^{\alpha,*}=\alpha \text{ if } t\in[0,\tau^{*,n})$ and $Y_t^{\tau^{*,n},\beta,*}=\beta \text{ if } t\in[\tau^{*,n},\infty)$.

It follows from \eqref{eq:u} that
\begin{align}\label{eq:cyt}
&v(x,z)\geq J(x,z;\tau^{*,n},\theta^{*,n},c^{*,n})=\Ex\Bigg[\int_0^{\tau^{*,n}}e^{-\rho t}(U(c_t^{*,n})-\ell_t)dt-\alpha\int_0^{\tau^{*,n}} e^{-\rho t} dL_t^{X^{*,n}}\notag\\
&\qquad\qquad+\int_{\tau^{*,n}}^{\infty}e^{-\rho t} U(c_t^{*,n}) dt-\beta\int_{\tau^{*,n}}^\infty e^{-\rho t} dL_t^{X^{*,n}}
\Bigg]\\
&=\Ex\left[\int_0^{\infty}e^{-\rho t}(\tilde U(Y_t^{*,n})+c^{*,n}_t Y_t^{*,n}-\ell_t\mathds 1_{\{t< \tau^{*,n}\}})dt-\alpha\int_0^{\tau^{*,n}} e^{-\rho t} dL_t^{X^{*,n}}-\beta\int_{\tau^{*,n}}^{\infty} e^{-\rho t} dL_t^{X^{*,n}}\right].\notag
\end{align}
Then, we have from It\^o's formula that 
\begin{align*}
& d\left(e^{-\rho t} X_t^{*,n} Y_t^{*,n}\right)=  -e^{-\rho t}\left(c_t^{*,n} +\mu_X Z_t  -r_t\mathds 1_{\{t< \tau^{*,n}\}}\right)Y_t^{*,n} d t-e^{-\rho t} X_t^{*,n} d L_t^{Y^{*,n}} \\
&\quad +e^{-\rho t} Y_t^{*,n} d L_t^{X^{*,n}}+e^{-\rho t}\left[\left(\mu^{\top} \sigma^{-1} X_t^{*,n} Y_t^{*,n}+\left(\theta_t^{*,n}\right)^{\top} \sigma X_t^{*,n} Y_t^{*,n}\right) d W_t-\sigma_Z Z_t  Y_t^{*,n} d W_t^\gamma\right].
\end{align*}
Plugging the above display into \eqref{eq:cyt},  it holds that
{\small\begin{align*}
& v(x,z)\geq \Ex\Bigg[\int_0^{\infty}e^{-\rho t}(\tilde U(Y_t^{*,n})-\mu_X Z_tY_t^{*,n}+r_t\mathds 1_{\{t< \tau^{*,n}\}}Y_t^{*,n}-\ell_t\mathds 1_{\{t< \tau^{*,n}\}})dt\notag\\
&\quad +\int_0^{\tau^{*,n}} e^{-\rho t} (Y_t^{*,n}-\alpha)dL_t^{X^{*,n}}+\int_{\tau^{*,n}}^{\infty} e^{-\rho t} (Y_t^{*,n}-\beta)dL_t^{X^{*,n}}-\int_0^{\infty} e^{-\rho t}X_t dL_t^{Y}\Bigg]+xy^*\notag\\
&=\Ex\left[\int_0^{\tau^{*,n}}e^{-\rho t}(\tilde U(Y_t^{*,n})+((r-\mu_X) Y_t^{*,n}-\ell) Z_t+r_c Y_t^{*,n}-\ell_c )dt+\int_{\tau^{*,n}}^\infty e^{-\rho t}(\tilde U(Y_t^{*,n})-\mu_X Y_t^{*,n}  Z_t )dt\right]+xy^*\notag\\
& =\Ex\left[\int_0^{\tau^{*,n}}e^{-\rho t}(\tilde U(Y_t^{*,n})+((r-\mu_X) Y_t^{*,n}-\ell) Z _t+r_c Y_t^{*,n}-\ell_c)dt+e^{-\rho \tau^{*,n}}\tilde G(Y_{\tau^{*,n}},Z_{\tau^{*,n}})\right]+xy^*.
\end{align*}} As a consequence, Fatou's lemma yields that
\begin{align*}
    \liminf_{n\to \infty } \Ex\left[\int_0^{\tau^{*,n}}e^{-\rho t} \tilde U(Y_t^{*,n}) dt+e^{-\rho \tau^{*,n}}\tilde G(Y_{\tau^{*,n}},Z_{\tau^{*,n}})
    \right]\geq  \Ex\left[\int_0^{\tau^{*}}e^{-\rho t} \tilde U(Y_t^{*}) dt+e^{-\rho \tau^{*}}\tilde G(Y_{\tau^{*}},Z_{\tau^{*}})\right].
\end{align*}
Under Assumption $\left(\boldsymbol{A}_{\boldsymbol{Z}}\right)$ and $\tau^{*,n}\leq\tau^*$, we deduce that
\begin{align*}
&   \left|\int_0^{\tau^{*,n}}e^{-\rho t}[((r-\mu_X) Y_t^{*,n}-\ell) Z _t+r_c Y_t^{*,n}-\ell_c ]dt\right| \leq
\int_0^{\tau^{*,n}}e^{-\rho t}|((r-\mu_X) Y_t^{*,n}-\ell) Z _t+r_c Y_t^{*,n}-\ell_c |dt\\
&\quad\leq \int_0^{\tau^{*}}e^{-\rho t}|((r-\mu_X) Y_t^{*,n}-\ell) Z _t+r_c Y_t^{*,n}-\ell_c |dt\leq \int_0^{\tau^{*}}e^{-\rho t}\left\{((\mu_X-r) \alpha+\ell) Z _t+|r_c\alpha-\ell_c |\right\}dt. 
\end{align*}
It is not difficult to check that $ \Ex\left[\int_0^{\tau^{*}}e^{-\rho t}((\mu_X-r) \alpha+\ell) Z _tdt\right]<+\infty$.  Using the DCT, we then arrive at 
\begin{align*}
&\lim_{n\to \infty } \Ex\left[\int_0^{\tau^{*,n}}e^{-\rho t}\left\{((r-\mu_X) Y_t^{*,n}-\ell) Z _t+r_c Y_t^{*,n}-\ell_c\right\}dt\right]\nonumber\\
&\qquad=\Ex\left[\int_0^{\tau^{* }}e^{-\rho t}\Big\{((r-\mu_X) Y_t^{* }-\ell) Z _t+r_c Y_t^*-\ell_c\Big\}dt
    \right].
\end{align*}
Consequently, we have 
\begin{align*}
v(x,z)&\geq \Ex\left[\int_0^{\tau^{* }}e^{-\rho t}(\tilde U(Y_t^{* })+((r-\mu_X) Y_t^{* }-\ell) Z _t+r_cY_t^{* }-\ell_c)dt+e^{-\rho \tau^{* }}\tilde G(Y_{\tau^{* }},Z_{\tau^{* }})\right]+xy^*\\
    &=\tilde v(y^*,z)+xy^*.
\end{align*}
Thus, we obtain $v(x, z)=\inf _{y \in(0, \alpha]}(\tilde{v}(y, z)+x y)$, and hence the proof of the dual theorem is complete. \hfill$\Box$\\

\noindent{\bf Proof of Corollary~\ref{coro:originalstrategy}.}\quad
Define $x^*(y,z) = v_x(\cdot,z)^{-1}(y)$ with $y \to v_x(\cdot,z)^{-1}(y)$ being the inverse function of $x \to v_x(x,z)$. Then, $x^* = x^*(y,z)$ satisfies the equation $v_x(x^*,z) = y$ for all $z \in \mathbb{R}_+$. We can obtain from a direct calculation that
\begin{align*}
\tilde{v}(y,z) &= v(x^*,z) - x^* y,\quad \tilde{v}_y(y,z) = -x^*,\quad \tilde{v}_z(y,z) = v_z(x^*,z), 
\end{align*}
and for $y> y_B(z)$, it holds that
\begin{align*}
\tilde{v}_{yy}(y,z) &= -\frac{1}{v_{xx}(x^*,z)},\quad \tilde{v}_{yz}(y,z) = \frac{v_{xz}(x^*,z)}{v_{xx}(x^*,z)}, \quad \tilde{v}_{zz}(y,z) = v_{zz}(x^*,z) - \frac{v_{xz}(x^*,z)^2}{v_{xx}(x^*,z)},
\end{align*}
while $0<y\leq y_B(z)$, it holds that
\begin{align*}
\tilde{G}_{yy}(y,z) &= -\frac{1}{G_{xx}(x^*,z)},\quad \tilde{G}_{yz}(y,z) = \frac{G_{xz}(x^*,z)}{G_{xx}(x^*,z)}, \quad \tilde{G}_{zz}(y,z) = G_{zz}(x^*,z) - \frac{G_{xz}(x^*,z)^2}{v_{xx}(x^*,z)}.
\end{align*}
By using the above dual relationship, it is not hard to check that the state process $X^*=(X_t^*)_{t\geq0}$ under the optimal strategy given by \eqref{eq:optimalcontr}-\eqref{eq:optimalcontr2} has the same dynamics as that under the feedback control triplet defined in this corollary.
\hfill$\Box$\\

\noindent{\bf Proof of Lemma~\ref{lem:ex1}.}\quad
Define $P(y):=\tilde v(y,0)-\tilde G(y,0)$ for $y\in (0,\alpha]$. Then, we have from the VI \eqref{eq:vihatv0} that, for $y\in(0,\alpha]$,
\begin{align*}
\xi y^2 P''(y)+\rho y P'(y)-\rho P(y)+r_c y-\ell_c  = 0,~~P(y_B) = 0,~~P'(y_B) = 0,~~ P'(\alpha) = M:=-\tilde G_y(\alpha,0).   
\end{align*}
We consider the following two sub-cases:
\begin{itemize}
\item {\bf Case 1}. In the case of $r_c\alpha\leq \ell_c$, we obviously have $P(y)=0$ for all $y\in(0,\alpha]$.
\end{itemize}

\begin{itemize}
\item {\bf Case 2}. In the case of $r_c\alpha>\ell_c$, we consider the following candidate solution to {Eq.~\eqref{eq:tildev}}:
\end{itemize}
\begin{align*}
P(y)=\left(C_1y+C_2y^{-\frac{\rho}{\xi}}-\frac{r_c}{\rho+\xi}y\ln y-\frac{\ell_c}{\rho}\right)\mathds 1_{\{y>y_B\}},\quad \forall y\in(0,\alpha],     
\end{align*}
with $0\leq y_B\leq\frac{\ell_c}{r_c}$. 
Then, the constant $C_1$, $C_2$ and $y_B$ satisfy
\begin{align}	\label{eq:z0}
\begin{cases} 
\displaystyle C_1y_B+C_2(y_B)^{-\frac{\rho}{\xi}}-\frac{r_c}{\rho+\xi} y_B\ln y_B-\frac{\ell_c}{\rho}= 0; \\[0.8em]
\displaystyle C_1-C_2\frac{\rho}{\xi}(y_B)^{-\frac{\rho}{\xi}-1}-\frac{r_c}{\rho+\xi}-\frac{r_c}{\rho+\xi}\ln y_B = 0; \\[0.8em]
\displaystyle C_1-C_2\frac{\rho}{\xi} \alpha^{-\frac{\rho}{\xi}-1}-\frac{r_c}{\rho+\xi}-\frac{r_c}{\rho+\xi}\ln\alpha= M.
\end{cases}
\end{align}
From the 1st and the 2nd equality in \eqref{eq:z0}, we deduce that 
\begin{align*}
\begin{cases} 
\displaystyle C_2(1+\frac{\rho}{\xi})(y_B)^{-\frac{\rho}{\xi}}+\frac{r_c}{\rho+\xi}y_B-\frac{\ell_c}{\rho}= 0\Longrightarrow C_2=\frac{\frac{\ell_c}{\rho}-\frac{r_c}{\rho+\xi}y_B}{1+\frac{\rho}{\xi}}(y_B)^{\frac{\rho}{\xi}};\\[0.6em]
\displaystyle C_1=C_2\frac{\rho}{\xi} (y_B)^{-\frac{\rho}{\xi}-1}+\frac{r_c}{\rho+\xi}+\frac{r_c}{\rho+\xi}\ln y_B = \frac{\frac{\rho}{\xi} \frac{\ell_c}{\rho}(y_B)^{-1}+\frac{r_c}{\rho+\xi}}{1+\frac{\rho}{\xi}}+\frac{r_c}{\rho+\xi}\ln y_B. 
\end{cases}
\end{align*}
Combining the 3rd equality in \eqref{eq:z0}, we derive that 
\begin{align}\label{eq:z04}
\frac{1}{\rho+\xi}\left(\frac{\ell_c/\alpha}{y_B/\alpha}-\frac{r_c\rho}{\rho+\xi}\right)
\left[1- \left(\frac{y_B}{\alpha}\right)^{\frac{\rho}{\xi}+1}\right]+\frac{r_c}{\rho+\xi} \ln\left(\frac{y_B}{\alpha}\right)- M=0.
\end{align}

Next, we prove that Eq.~\eqref{eq:z04} has a unique root $y_B\in(0,\ell_c/r_c]$. To do it, denote by 
\begin{align*}
F(x):=\frac{1}{\rho+\xi}\left(\frac{\ell_c/\alpha}{x}-\frac{r_c\rho}{\rho+\xi}\right)
\left[1- x^{\frac{\rho}{\xi}+1}\right]+\frac{r_c}{\rho+\xi} \ln x- M,\quad \forall x\in\left(0,\frac{\ell_c}{r_c\alpha}\right).   
\end{align*}
Then, we can check that 
\begin{align*}	
& F^\prime(x)=\left(\frac{1}{x^2}+\frac{\rho x^{\rho/\xi-1}}{\xi}\right)\frac{ r_c x-\ell_c/\alpha }{\rho+\xi}
<0,~~\forall x\in\left(0,\frac{\ell_c}{r_c\alpha}\right);\quad F(0+)=+\infty,\\[2mm]
& F(\ell_c/(r_c\alpha))=\frac{r_c}{\rho+\xi}G(\ell_c/(r_c\alpha))-M\;\;\mbox{with}\;\; G(x):=\frac{1- x^{\frac{\rho}{\xi}+1}}{1+\frac{\rho}{\xi}}+\ln x,\\[2mm]
& G^\prime(x)=- x^{\frac{\rho}{\xi}}+x^{-1}>0,~~\forall x\in(0,1),~~ 
G(1)=0;\quad G(\ell_c/(r_c\alpha))<0,~~F(\ell_c/(r_c\alpha))<0.	
\end{align*}
Hence, there exists a unique root $x^*\in(0,\ell_c/(r_c\alpha))$ of $F(x)=0$ on $x\in\left(0,\frac{\ell_c}{r_c\alpha}\right)$, and there exists a unique $y_B=\alpha X^*\in(0,\ell_c/r_c)$ such that \eqref{eq:z04} holds. Since $P(y)$ satisfies $\tilde {\mathcal{L}}P'(y) + r= 0$ for $y\in(y_B,\alpha)$ with $P'(y_B)= 0$ and $P'(\alpha)=M>0$. {Then, the strong comparison principle yields that $P'(y)>0$ for  $y\in(y_B,\alpha]$.} Combining the fact that $P(y_B) =0$, it follows that $P(y)>0$, for $y\in(y_B,\alpha]$. As a consequence $\tilde v(y,0)=\tilde G(y,0)+P(y)$, for $y\in(0,\alpha]$. 
\hfill$\Box$\\

\noindent{\bf Proof of Corollary~\ref{coro:ELS}.}\quad
Let $f(y):=\Ex[\int_0^{\tau^*}e^{-\rho t}\{U(I_U(Y_t^{\alpha,*}))-\ell_c\}dt]$ for $y\in(0,\alpha]$. It follows from It\^o's formula that, for $y\in(0,\alpha]$,
\begin{align*}
\xi y^2 f''(y)+\rho y f'(y)-\rho f(y)+\frac{1}{p}y^{\frac{p}{p-1}}-\ell_c=0, ~~f(y_B)=0,~~f'(\alpha)=0.
\end{align*}
Solving the above ODE, which yields that \begin{align*}
f(y)  &=\left(
\frac{\rho}{\xi}K-\frac{(p-1){\alpha^{\frac{1}{p-1}}}}{\rho(1-p)-\xi p}\right) y+ \alpha ^{{\rho \over \xi}+1}K  y^{-\frac{\rho}{\xi}}+\frac{(p-1)^2}{p[\rho(1-p)-\xi p]}y^{\frac{p}{p-1}}-\frac{\ell_c}{\rho}.  
\end{align*}
Define the function by $g(y):=\Ex[\int_{\tau^*}^\infty e^{-\rho t} U(I_U(Y_t^{\beta,*}))dt]=\Ex[e^{-\rho {\tau^*}} \hat g(Y_{\tau^*}^{\alpha,*})]$ with $\hat g(y):=\Ex[\int_0^{\infty}
e^{-\rho t}  U(I_U(Y_t^{\beta,*})) dt]$ for $y\in(0,\alpha]$. Then, similar to \eqref{eq:explicitformtildeG}, we have
\begin{align*}
\hat g(y)= \frac{(p-1)}{  \xi p + \rho(p-1) } \beta^{\frac{1}{p-1}} y - \frac{(p-1)^2}{p\left[p\xi + \rho(p-1) \right]} y^{\frac{p}{p-1}},\quad \forall y\in(0,\alpha].
\end{align*}        
Furthermore, we have from It\^o's rule that
\begin{align*}
\xi y^2 g''(y)+\rho y g'(y)-\rho g(y)=0, ~g(y_B)=\hat g(y_B),~g'(\alpha)=0,\quad \forall y\in(0,\alpha],
\end{align*}
which leads to that
\begin{align*}
g(y) = \frac{\hat{g}(y_B)}{\rho y_B + \xi \alpha^{\frac{\rho}{\xi}+1} y_B^{-\frac{\rho}{\xi}}} \left(\rho y + \xi \alpha^{\frac{\rho}{\xi}+1} y^{-\frac{\rho}{\xi}}\right),\quad \forall y\in(0,\alpha].
\end{align*}
Consequently, from the definition of problem~\eqref{eq:u} and the duality $y^*(x,0)=v_x(x,0)$, it follows that
\begin{align*}
&\Ex\left[ \alpha \int_0^{\tau^*} e^{-\rho t}dL_t^X+\beta \int_{\tau^*}^{\infty} e^{-\rho t}dL_t^X\right]=\Ex\left[ \int_0^{\tau^*} e^{-\rho t} (U(c_t^*)-\ell_c )dt+\int_{\tau^*}^\infty e^{-\rho t} U(c_t^*)dt\right]\nonumber\\
&\qquad- v(x,0)=f(v_x(x,0))+g(v_x(x,0))-v(x,0).
\end{align*}
Thus, we complete the proof of the corollary.
\hfill$\Box$\\

\noindent{\bf Proof of Lemma~\ref{lem:Pz}.}\quad
Let $P(y,z):=(\tilde v-\tilde G)(y,z)$ for $(y,z)\in (0,\alpha]\times \R_+$. Then, the free-boundary problem \eqref{eq:tildevGyBz} can be rewritten as: 
\begin{align*}
\begin{cases}
\displaystyle \tilde {\cal L} P+(ry -\ell) z+r_cy-\ell_c=0,~~\text{on}~O_{\alpha},\\[0.4em]
\displaystyle P_y(\alpha,z)=-\tilde G_y(\alpha,z),~\forall z\in\R_+,\\[0.4em]
\displaystyle P(y_B(z),z)=0,~P_y(y_B(z),z)=0,~\forall z\in\R_+.
\end{cases}
\end{align*}
As a consequence, for $(y,z)\in (y_B(z),\alpha]\times\R_+$, we deduce that $\tilde {\cal T} P_z+(ry -\ell) =0$ with the boundary condition $P_z(y_B(z),z)=0$. Here, the operator $\tilde {\cal T}$ acted on $\phi\in C^{2}(O_{\alpha})$ is defined by 
\begin{align*}
\tilde {\cal T}\phi &:= {\mu^{\top}(\sigma \sigma^{\top})^{-1} \mu \over2} y^2\partial_{yy}\phi+\sigma^2_Z z|\gamma|^2\partial_{z}\phi
 +{\sigma^2_Z z^2|\gamma|^2\over 2}\partial_{zz}\phi-\mu^{\top}(\sigma\sigma^{\top})^{-1}\sigma\sigma_Z z\gamma  y\partial_{yz}\phi
 +\rho y\partial_y\phi\nonumber\\
 &\quad+\mu_Z (\phi+z\partial_z\phi)-\rho \phi.    
\end{align*}
On the other hand, using VI~\eqref{eq:vihatv0}, it is not difficult to verify that, if $(y,z)\in \{(y,z)\in(0,\alpha]\times \R_+;~(ry -\ell) z+r_cy-\ell_c\leq  0\}$, then $P(y,z)\equiv 0$, i.e., $\tilde v (y,z)=\tilde G(y,z)$. In other words, if $\mathcal C$ is non-empty,  then  $y_B(z)\leq \frac{\ell z+\ell_c}{rz+r_c}$ holds for all $z\in \R_+$. If $r_c=\ell_c=0$, it follows from the maximum principle in \cite{gilbarg1977elliptic} that $P_z(y,z)\leq 0$ on $\{(y,z)\in(0,\alpha]\times\R_+;~y_B(z)<y\leq \frac{\ell}{r}\}$; while $P_z\geq 0$ on $\{(y,z)\in(0,\alpha]\times\R_+;~y\in (\frac{\ell}{r},\alpha]\}$. Consequently, it is not difficult to have $y_B(z_1)\leq y_B(z_2)$ for $z_1\leq z_2$. 
\hfill$\Box$\\

\end{document}